\numberwithin{equation}{section}
\numberwithin{table}{section}
\theoremstyle{plain}
\newtheorem{theorem}{Theorem}[section]
\newtheorem{lemma}{Lemma}[section]
\newtheorem{prop}{Proposition}[section]
\theoremstyle{remark}
\newtheorem{remark}{Remark}[section]
\newtheorem{definition}{Definition}[section]
\newcommand{\Kt}{\tilde{K}}
\newcommand{\Nedelec}{N{\'{e}}d{\'{e}}lec }
\newcommand{\diam}{\mathop\mathrm{diam}}
\newcommand{\dett}{\mathop\mathrm{det}}
\newcommand{\dive}{\mathop\mathrm{div}}
\newcommand{\grad}{\ensuremath{\mathop{{\bf{grad}}}}}
\newcommand{\gradrz}{\ensuremath{{\mathbf{grad}}}_{rz}}
\newcommand{\bcurl}{\mathop{\mathbf{curl}}}
\newcommand{\curlrz}{\mathrm{curl}_{rz}}
\newcommand{\bcurlrz}{\mathbf{curl}_{rz}}
\newcommand{\TT}{\mathscr{T}}
\newcommand{\fe}{{\mathfrak{e}}}
\newcommand{\ba}{{\boldsymbol{a}}}
\newcommand{\bB}{{\boldsymbol{B}}}
\newcommand{\bC}{{\boldsymbol{C}}}
\newcommand{\bd}{{\boldsymbol{d}}}
\newcommand{\bz}{{\boldsymbol{z}}}
\newcommand{\be}{{\boldsymbol{e}}}
\newcommand{\bn}{{\boldsymbol{n}}}
\newcommand{\bH}{{\boldsymbol{H}}}
\newcommand{\bL}{{\boldsymbol{L}}}
\newcommand{\bu}{{\boldsymbol{u}}}
\newcommand{\bv}{{\boldsymbol{v}}}
\newcommand{\bw}{{\boldsymbol{w}}}
\newcommand{\RRR}{{\mathbb{R}}}
\newcommand{\bt}{{\boldsymbol{t}}}
\newcommand{\bx}{{\boldsymbol{x}}}
\newcommand{\by}{{\boldsymbol{y}}}
\newcommand{\bZ}{{\boldsymbol{Z}}}
\newcommand{\Da}{D_{\ba}}
\newcommand{\triint}{\int_{D_{\ba_1}}\!\!\int_{D_{\ba_2}}\!\!\int_{D_{\ba_3}}}
\newcommand{\trid}{d\boldsymbol{y}_{\!3} d\boldsymbol{y}_{\!2}d\boldsymbol{y}_{\!1}}
\newcommand{\xt}{\tilde{\boldsymbol{x}}_{\boldsymbol{y}}}
\newcommand{\Lrnk}[1]{\| {#1} \|_{L^2_r(K)}}
\newcommand{\byi}{{\boldsymbol{y}_i}}
\newcommand{\bai}{{\boldsymbol{a}_i}}
\newcommand{\baj}{{\boldsymbol{a}_j}}
\newcommand{\bak}{{\boldsymbol{a}_k}}
\newcommand{\bff}{{\boldsymbol{f}}}
\newcommand{\tn}{\textnormal}
\newcommand{\iDai}{\int_{D_{\ba_{1}}}}
\newcommand{\iDaj}{\int_{D_{\ba_{2}}}}
\newcommand{\iDak}{\int_{D_{\ba_{3}}}}
\newcommand{\iDajj}{\int_{D_{\ba_{j}}}}
\newcommand{\iDakk}{\int_{D_{\ba_{k}}}}
\newcommand{\dJ}{\mathop\mathrm{det}\Big(\dfrac{d\tilde{\boldsymbol{x}}_{\boldsymbol{y}}}{d\boldsymbol{x}}\Big)}
\author{Minah Oh} 
\address{James Madison University, Department of Mathematics and Statistics,
  Harrisonburg, VA 22807.}\email{ohmx@jmu.edu}
    \thanks{The work of the author was partially supported by NSF grant number DMS-1913050.}
\title[Projectors in weighted norms]{The Hodge Laplacian on Axisymmetric Domains and its Discretization}
\subjclass[ ]{ }
\keywords{axisymmetric, weighted Sobolev spaces, mixed method, Fourier finite element}
\subjclass[2010]{65N30, 58A14}
\keywords{Hodge Laplacian, de Rham cohomology, axisymmetric, weighted Sobolev spaces, mixed method, Fourier finite element}
\begin{document}

\begin{abstract}
We study the mixed formulation of the abstract Hodge Laplacian 
on axisymmetric domains with general data through Fourier finite element methods in weighted function spaces.
Closed Hilbert complexes and commuting projectors are used as in the work of D. N. Arnold, R. S. Falk, and R. Winther, Finite element exterior calculus: from Hodge theory to numerical stability, Bull. Amer. Math.
Soc. (N.S.), 47 (2010), pp. 281--354, by using the new family of finite element spaces for general axisymmetric problems introduced in M. Oh, de Rham complexes arising from Fourier finite element methods in axisymmetric domains,
Comput. Math. Appl., 70 (2015), pp. 2063--2073.
In order to get stability results and error estimates for the discrete mixed formulation,
we construct commuting projectors that can be applied to functions with low regularity.   
\end{abstract}

\maketitle

\section{Introduction} 

An axisymmetric problem is a problem defined on a three-dimensional ($3D$) domain $\breve{\Omega}$ that is symmetric with respect to an axis, i.e.,
$\breve{\Omega}\subset\RRR^3$ is obtained by rotating a two-dimensional ($2D$) domain $\Omega\subset\RRR^2_+=\{(r,z)\in\RRR^2: r\geq 0 \}$ around the axis of symmetry (the $z$-axis).
Throughout this paper, we will assume that $\Omega$ is a bounded Lipschitz domain.
\textcolor{black}{We will also use $(r,\theta,z)$ to denote cylindrical coordinates.} 
An axisymmetric problem with data that is independent of the rotational variable $\theta$ can be reduced to a $2D$ problem 
by using cylindrical coordinates. An axisymmetric problem with data that has $\theta$-dependency can be reduced to a sequence of $2D$ problems
by using cylindrical coordinates and a Fourier series decomposition in the $\theta$-variable ($-\pi\leq\theta\leq\pi$), where the solution to each $2D$ problem
is the $n$-th Fourier mode of the $3D$ solution. A discrete problem corresponding to a $2D$ problem is significantly smaller than that corresponding to a $3D$ problem,
so such dimension reduction is an attractive feature considering computation time. Due to the Jacobian arising from change of variables, however, the resulting $2D$ problems
are posed in weighted functions spaces where the weight function is the radial component $r$. 
Furthermore, the formulas of the $\grad, \bcurl$, and $\dive$ operators affecting the
$n$-th Fourier mode of a function is quite different from the standard ones. In particular, there are multiple $\dfrac{n}{r}$-terms appearing in those formulas, so these operators
do not map standard polynomial spaces into another polynomial space. This makes it difficult to construct finite element spaces that form a discrete de Rham complex even though
that is a standard tool in the study of mixed finite element methods. (See \cite{monk:book, AFW:2006, AFW:2006:part1, AFW:2010}.) This difficulty was overcome in \cite{O:2015} where the author constructed a new family of finite element spaces (call them $A_h, \bB_h, \bC_h$, and $D_h$)
that satisfy the following exact sequence property:
\begin{equation} \label{intro1}
\begin{CD}
0  \rightarrow A_h   @>\mathbf{grad}^n_{rz}>> \bB_h    @>\bcurl^n_{rz}>> \bC_h    @>\dive^n_{rz}>>   D_h \rightarrow 0, 
\end{CD}
\end{equation}
where $\mathbf{grad}^n_{rz}, \bcurl^n_{rz}$, and $\dive^n_{rz}$ are the operators of interest when considering axisymmetric problems with general data.
This paper will use these Fourier finite element spaces to approximate the solution to the mixed formulation of the abstract Hodge Laplacian on axisymmetric domains with general data.

In \cite{O:2014}, the continuous and discrete mixed formulations of the Hodge Laplacian on axisymmetric domains 
were studied through Hilbert complexes under the additional assumption that the given data in the problem is $\theta$-independent. 
In this paper, we extend these results to the general case where the data is dependent on $\theta$. The key ingredients needed to accomplish this task is the
construction of a sequence of de Rham complexes and a uniformly $W$-bounded (or $V$-bounded) cochain projections that can be applied to axisymmetric problems with general data.
In other words, we construct necessary tools for the so-called Fourier-finite-element-methods (Fourier-FEMs) so that we can apply the well-known theory of \cite{AFW:2010} to general axisymmetric problems. 


Projection operators which commute with the governing differential operators are key tools for the stability analysis of finite element methods (FEMs). 
Commuting projection operators that are well-defined for functions with lower regularity are now standard for the analysis of mixed FEMs as well. 
Mixed formulations on axisymmetric domains have been studied through these commuting projection operators by many authors under the assumption that the data is axisymmetric ($\theta$-independent).
(See \cite{CGP:2008, MR2670109, CGO:2010, Oh_PHD, Ervin:2013, O:2014}.) In \cite{GO:2012}, commuting smoothed projections that can be applied to axisymmetric problems with axisymmetric data  
were constructed by modifying the work of Schoberl (\cite{Schob01, Schberl2005NumericalMF}) and Christiansen and Winther (\cite{Christiansen:2008}) to appropriate weighted functions spaces. In this paper, we modify \cite{GO:2012} to construct commuting smoothed projections onto the
discrete spaces arising in (\ref{intro1}) that can be applied to axisymmetric problems with general data. These projections can then be used as in \cite{AFW:2010} to prove stability and convergence results for the weighted mixed formulation of the Hodge Laplacian. 

The remainder of the paper is organized as follows. In the following section, we give a brief introduction to the Fourier series decomposition and Fourier-FEMs. We also introduce the main Hilbert spaces of interest there along with some notations. 
In section~\ref{section3}, we introduce the abstract Hodge Laplacian on axisymmetric domains and its mixed formulation following the framework of \cite{AFW:2010}. 
In section~\ref{section4}, we summarize the new family of finite element spaces constructed in \cite{O:2015} that can be applied to axisymmetric problems with general data 
and continue to discuss the discrete weighted mixed formulation of the Hodge Laplacian. We state the main stability and quasi-optimality results at the end of this section as well. 
In section~\ref{section5}, we construct 
a so-called W-bounded cochain projections that are needed to prove the main result stated in section~\ref{section4}, and
Numerical results follow in section~\ref{numerical}. Some proofs have been moved to the Appendix (section~\ref{appendix}) to improve readability of the paper. 

\section{Preliminaries}
\label{sec:preliminaries}

Axisymmetric problems with general data have been studied by many authors through a Fourier series decomposition. (See \cite{BDM:1999, Heinrich:1996, Boniface:2005, CL:2011} for example.)
The underlying principle
is the application of partial Fourier approximation (truncated partial Fourier series) using
trigonometric polynomials of degree $N$ with respect to $\theta$. This step reduces the $3D$ problem into
$N$ $2D$ problems. The term Fourier-FEMs is used when each
Fourier mode is approximated by using appropriate FEMs. Fourier-FEMs for the axisymmetric Poisson equations have been studied in \cite{Heinrich:1996}, and in \cite{Nkemzi:2005,Nkemzi:2007},
Fourier-FEMs for the Maxwell equations were analyzed.

If a $3D$ problem is defined on an axisymmetric domain $\breve{\Omega}\subset\RRR^3$, then since any function $u$ defined on $\breve{\Omega}$ is periodic with respect to $\theta$,
one can use a Fourier series decomposition to represent this function. In particular,
by using the orthogonal and complete system 
$\left\{ 1, \sin\theta, \cos\theta, \cdot\cdot\cdot, \sin n\theta, \cos n\theta, \cdot\cdot\cdot \right\}$ in $L^2((-\pi, \pi))$ the function $u$ can be written as
\begin{align} \label{scalar_fourier}
u &= u_0 + \sum_{n=1}^\infty u_n\cos n\theta + \sum_{n=1}^\infty u_{-n}\sin n\theta.
\end{align} 

Similarly, for vector-valued functions on $\breve{\Omega}$, we first write 
$\bu=u_r\be_r + u_\theta\be_\theta + u_z\be_z$ by using the cylindrical basis $\be_r$, $\be_\theta$, and $\be_z$.
Then, write $\bu=\bu^s+\bu^a$,
where $\bu^s$ and $\bu^a$ denote the symmetric (with respect to $\theta=0$) and antisymmetric parts of $\bu$ respectively \cite[page 252 figure 8.3]{DES:2011}, and consider its partial Fourier series decomposition:
\begin{equation} \label{vector_fourier}
\begin{aligned}
\bu^s &= 
\begin{pmatrix}
u_r^0 \\
0 \\
u_z^0
\end{pmatrix}+ 
\sum_{n=1}^\infty
\begin{pmatrix}
u_r^n \cos n\theta \\
u_\theta^n \sin n\theta \\
u_z^n \cos n\theta
\end{pmatrix}, \\
\bu^a &= 
\begin{pmatrix}
0 \\
u_\theta^0 \\
0
\end{pmatrix}+ 
\sum_{n=1}^\infty
\begin{pmatrix}
u_r^{-n} \sin n\theta \\
u_\theta^{-n} \cos n\theta \\
u_z^{-n} \sin n\theta
\end{pmatrix}.
\end{aligned}
\end{equation}

Next, consider the usual $\grad, \bcurl$, and $\dive$-operators in cylindrical coordinates:
\begin{equation*} 
\begin{aligned}
\grad u &= (\partial_r u, \frac{1}{r}\partial_\theta u, \partial_z  u )^T, \\
\bcurl \bu &= (\frac{1}{r}\partial_\theta u_z - \partial_z u_\theta, \partial_z u_r-\partial_r u_z, \frac{1}{r}(\partial_r (ru_\theta)-\partial_\theta u_r))^T, \\
\dive \bu &= \frac{1}{r}\partial_r (ru_r) + \frac{1}{r}\partial_\theta u_\theta + \partial_zu_z.
\end{aligned}
\end{equation*}

If one applies these operators to (\ref{scalar_fourier}) or (\ref{vector_fourier}), then
by the orthogonality of $\cos n\theta$ and $\sin n\theta$, the individual $n$-th order Fourier modes in
$\grad u$, $\bcurl\bu$, and $\dive\bu$
decouple from one another in a weak formulation. The resulting $\grad, \bcurl,$ and $\dive$ formulas
that affect the $n$-th Fourier mode, and therefore the 
main operators of interest in Fourier-FEMs, are the following:
\begin{equation} \label{formulas}
\begin{aligned}
\grad\nolimits_{rz}\nolimits^{n} u 
&= \begin{bmatrix}
      \partial_r u\\
      -\frac{n}{r}u \\
       \partial_z u   
     \end{bmatrix}, \\
\bcurl\nolimits_{rz}\nolimits^{n} 
\begin{bmatrix}
       u_r \\
       u_\theta \\
       u_z           
     \end{bmatrix}
&= 
\begin{bmatrix}
       -(\frac{n}{r}u_z + \partial_z u_\theta)\\
       \partial_z u_r - \partial_r u_z \\
        \frac{nu_r+u_\theta}{r} + \partial_r u_\theta           
     \end{bmatrix}, \\
\dive\nolimits_{rz}\nolimits^{n} 
\begin{bmatrix}
       u_r \\
       u_\theta \\
       u_z           
     \end{bmatrix}
&= 
\partial_ru_r + \dfrac{u_r-nu_\theta}{r} + \partial_zu_z.
\end{aligned}
\end{equation}
Throughout this paper, the variable $n$ will be used to indicated the $n$-th Fourier mode of the function of interest.
Note that when $n=0$, the operators above become operators that arise in axisymmetric problems with axisymmetric data.
Since, we are extending the known results for $n=0$ to general data in this paper, we assume that $n>0$. 

Next, let $L^2(\breve{\Omega})$ denote the function space consisting of square integrable functions on $\breve{\Omega}$,
and let $\breve{L}^2(\breve{\Omega})$ denote the closed subspace of $L^2(\breve{\Omega})$ that consist of functions that are independent of the $\theta$-variable.
We then consider the $2D$ domain $\Omega\subset\RRR^2_+$ associated to $\breve{\Omega}$ and define the following weighted Hilbert space:
\[ 
L^2_r(\Omega) = \left\{u: \int\int_\Omega u(r,z)^2 rdrdz < \infty \right\}.
\]
Then, there is an isometry (up to a factor of $\sqrt{2\pi}$) between 
$\breve{L}^2(\breve{\Omega})$ and $L^2_r(\Omega)$,  since
\[
\int_{\breve{\Omega}} \breve{u}(r,\theta,z)^2 rdrd\theta dz = 2\pi\int_\Omega u(r,z)^2 rdrdz,
\]
where $u(r,z)\in L^2_r(\Omega)$ is the function that has the same formula as $\breve{u}(r,\theta,z)\in\breve{L}^2(\breve{\Omega})$. 
The inner-product and norm associated with $L^2_r(\Omega)$ will be denoted as follows:
\begin{align*}
(u,v)_{L^2_r(\Omega)} &= \int_\Omega uv rdrdz, \\
\| u \|_{L^2_r(\Omega)} &= \int_\Omega u^2 rdrdz.
\end{align*}

The weighted Hilbert spaces associated with the operators (\ref{formulas}) are summarized below.
\begin{align*} 
H_r( \grad\nolimits^n , \Omega) &= \left\{ u \in L^2_r(\Omega): \grad\nolimits_{rz}^n u \in L^2_r(\Omega) \times L^2_r(\Omega) \times L^2_r(\Omega)  \right\}, \\
\bH_r(\bcurl\nolimits^n, \Omega)  &=  \left\{ \bu\in L^2_r(\Omega) \times L^2_r(\Omega) \times L^2_r(\Omega):  
\bcurl\nolimits_{rz}\nolimits^{n}\bu \in L^2_r(\Omega) \times L^2_r(\Omega) \times L^2_r(\Omega) \right\}, \\
\bH_r(\dive\nolimits^n, \Omega) &= \left\{ \bu\in L^2_r(\Omega) \times L^2_r(\Omega) \times L^2_r(\Omega): \dive\nolimits_{rz}^n\bu \in L^2_r(\Omega) \right\}. 
\end{align*}
The associated inner-product to each space is
\begin{align*}
(u,v)_{H_r( \grad\nolimits^n , \Omega)} &= (u,v)_{L^2_r(\Omega)} + (\grad\nolimits_{rz}^n u, \grad\nolimits_{rz}^n v)_{L^2_r(\Omega)}, \\
(\bu,\bv)_{\bH_r(\bcurl\nolimits^n, \Omega) } &= (\bu,\bv)_{L^2_r(\Omega)} + (\bcurl\nolimits_{rz}\nolimits^{n}\bu, \bcurl\nolimits_{rz}\nolimits^{n}\bv)_{L^2_r(\Omega)}, \\
(\bu,\bv)_{\bH_r(\dive\nolimits^n, \Omega)} &=  (\bu,\bv)_{L^2_r(\Omega)} + (\dive\nolimits_{rz}^n\bu,  \dive\nolimits_{rz}^n\bv)_{L^2_r(\Omega)}. 
\end{align*}

We will use $\Gamma_0$ (open) to denote the part of the boundary of $\Omega$ that is on the axis of symmetry (the $r=0$ axis), and 
$\Gamma_1$ to denote part of the boundary of $\Omega$ that is not on the axis of symmetry.
In other words, $\partial\Omega=\Gamma_0\cup\Gamma_1$, and a $2\pi$-rotation of $\Gamma_1$ around the axis of symmetry returns the entire boundary of $\breve{\Omega}$. 
We will use boldface when writing a vector-valued function or a function space that consists of vector-valued functions. 
For any vector $\bv$ of length-three, we will use $v_r$ to denote the $r$-component of $\bv$,
$v_\theta$ to denote the $\theta$-component of $\bv$, and so on, i.e., $\bv=v_r\be_r + v_\theta\be_\theta + v_z\be_z$.
Also, we will use $\bv_{rz}$ to denote $(v_r,v_z)$, the length-two vector that consists of the $r$ and $z$ components of $\bv$. 

\section{The Abstract Hodge Laplacian on Axisymmetric Domains and the Mixed Formulation} \label{section3}

In this section, we discuss the weighted Hodge Laplacian problem in a unified way following the framework of \cite{AFW:2010}.
We assume that $n>0$ is fixed and omit writing $n$ when writing the exterior derivative $d^k$, the co-derivative $\delta_{k}$,
and the function spaces $V^k$ and $V_k^*$ appearing in the domain complex and dual complex respectively, for $k=0,1,2,3$.
Throughout this paper, $k$ is an integer value between zero and three. 

Let $d^k$ be defined as the following
\begin{align*}
d^0 v &= \grad\nolimits^n\nolimits_{rz} v, \\
d^1 \bv &= \bcurl\nolimits^n\nolimits_{rz} \bv, \\
d^2 \bv &= \dive\nolimits^n\nolimits_{rz} \bv, \\
d^3 v &= 0,
\end{align*}
and let $V^k$ be the Hilbert space associated with it, i.e.,
\begin{equation} \label{Vk_space}
\begin{aligned}
V^0 &= H_r(\grad\nolimits^n, \Omega), \\
V^1 &= \bH_r(\bcurl\nolimits^n, \Omega), \\
V^2 &= \bH_r(\dive\nolimits^n, \Omega), \\
V^3 &=  L^2_r(\Omega).
\end{aligned}
\end{equation}
Viewing $d^k$ as an exterior derivative of differential forms that represents a Fourier series decomposition, 
one can calculate the dual operator of $d^k$ by calculating its co-derivative $\delta_k$. The detailed formulation of the domain complex and the dual complex 
using differential forms that represent our general axisymmetric setting can be found in subsection~\ref{differential_forms} in the Appendix.
Accordingly, we define $\delta_k$ in the following way:
\begin{align*}
\delta_0 &= 0, \\
\delta_1\bv &= -\dive\nolimits^{n*}_{rz}\bv, \\
\delta_2\bv &= \bcurl\nolimits^{n*}_{rz}\bv, \\
\delta_3v &=-\grad\nolimits_{rz}\nolimits^{n*}v,
\end{align*}
where 
\begin{equation} \label{dualoperator}
\begin{aligned}
\grad\nolimits_{rz}\nolimits^{n*} u 
&= \begin{bmatrix}
      \partial_r u\\
      \frac{n}{r}u \\
       \partial_z u   
     \end{bmatrix}, \\
\bcurl\nolimits_{rz}\nolimits^{n*} 
\begin{bmatrix}
       u_r \\
       u_\theta \\
       u_z           
     \end{bmatrix}
&= 
\begin{bmatrix}
       \frac{n}{r}u_z - \partial_z u_\theta\\
       \partial_z u_r - \partial_r u_z \\
        \frac{-nu_r+u_\theta}{r} + \partial_r u_\theta           
     \end{bmatrix}, \\
\dive\nolimits_{rz}\nolimits^{n*} 
\begin{bmatrix}
       u_r \\
       u_\theta \\
       u_z           
     \end{bmatrix}
&= 
\partial_ru_r + \dfrac{u_r+nu_\theta}{r} + \partial_zu_z.
\end{aligned}
\end{equation}
We will also need the following Hilbert spaces associated with these operators.
\begin{equation} \label{Vk_star}
\begin{aligned}
V_3^* &= H_{r,0}(\grad\nolimits^{n*}, \Omega) \\ 
&= \{u\in L^2_r(\Omega): \grad\nolimits_{rz}\nolimits^{n*} u \in L^2_r(\Omega) \times L^2_r(\Omega) \times L^2_r(\Omega), u=0 \textnormal{ on } \Gamma_1 \}, \\
V_2^* &= \bH_{r,0}(\bcurl\nolimits\nolimits^{n*} , \Omega) \\ 
&= \left\{ \bu\in L^2_r(\Omega) \times L^2_r(\Omega) \times L^2_r(\Omega): 
\bcurl\nolimits_{rz}\nolimits^{n*} \bu \in L^2_r(\Omega) \times L^2_r(\Omega) \times L^2_r(\Omega), \bu_{rz}\cdot\bt = 0 \textnormal{ and } u_\theta = 0 \textnormal{ on } \Gamma_1 \right\}, \\
V_1^* &= \bH_{r,0}(\dive\nolimits^{n*}, \Omega) \\  
&= \left\{ \bu\in L^2_r(\Omega) \times L^2_r(\Omega) \times L^2_r(\Omega): \dive\nolimits_{rz}^{n*}\bu \in L^2_r(\Omega),  \bu_{rz}\cdot\bn=0  \textnormal{ on } \Gamma_1  \right\}, \\
V_0^* &= L^2_r(\Omega),
\end{aligned}
\end{equation}
where the boundary conditions in (\ref{Vk_star}) are understood through trace operators as usual. (See \cite{Lacoste:2000} or subsection~\ref{differential_forms}.)
Then, $\delta_{k+1}: V_{k+1}^*\rightarrow V_{k}^*$ is the dual operator of $d^k: V^k\rightarrow V^{k+1}$
with respect to the $L^2_r$-inner product, and
\begin{equation} \label{adjoint2}
(d^ku,v)_{L^2_r(\Omega)} = (u, \delta_{k+1} v)_{L^2_r(\Omega)}  \quad \forall  u\in V^k \quad\forall v\in V_{k+1}^*.
\end{equation}
This is due to Theorem~\ref{adjoint} and (\ref{cos_sin}).

\begin{remark}
It is important to observe that $d^k$ is a closed, densely defined operator on $L^2_r(\Omega)$ (or  $\bL^2_r(\Omega)=L^2_r(\Omega) \times L^2_r(\Omega) \times L^2_r(\Omega)$) such that the range of $d^k$ is in $\bL^2_r(\Omega)$ (or  $L^2_r(\Omega)$),
and $d^{k+1} \circ d^k=0$. In other words, 
\begin{equation} \label{fred}
0\rightarrow L^2_r(\Omega) \xrightarrow{d^0} \bL^2_r(\Omega) \xrightarrow{d^1} \bL^2_r(\Omega) \xrightarrow{d^2} L^2_r(\Omega) \rightarrow 0,
\end{equation}
is a (unbounded) Hilbert complex. A
Hilbert complex is closed if the range of each $d^k$ is closed in $L^2_r(\Omega)$ (or $\bL^2_r(\Omega)$). 
Since the range of $d^k$ is finite codimensional in the null space of $d^{k+1}$ (See \cite{O:2015}), 
(\ref{fred}) is a Fredholm complex. 
Therefore, (\ref{fred}) is a closed Hilbert complex. 
\end{remark}

The domain complex associated with the closed Hilbert complex (\ref{fred}) is
\begin{equation} \label{domain_complex}
\begin{CD}
0 \rightarrow H_r(\grad^n, \Omega)   @> d^0 >> \bH_r(\bcurl\nolimits^n, \Omega)    @>d^1 >>  \bH_r(\dive^n, \Omega)    @>d^2 >>   L^{2}_{r}(\Omega) \rightarrow 0.
\end{CD}
\end{equation}
\textcolor{black}{We note here that (\ref{domain_complex}) for each Fourier mode has finite dimensional cohomology. In fact, 
it was proved in \cite[Theorem 2.1]{O:2015}  that (\ref{domain_complex}) form an exact sequence without further geometrical assumptions \cite[Remark 2.1]{O:2015}, 
so the cohomology vanishes.} 

Associated with this domain complex, we have a dual complex of the following form: 
\begin{equation} \label{dual_complex}
\begin{CD}
0 \leftarrow L^2_r(\Omega)     @< \delta_1 << \bH_{r,0}(\dive^{n*}, \Omega)    @< \delta_2 <<   \bH_{r,0}(\bcurl^{n*}, \Omega)     @<\delta_3 <<   H_{r,0}(\grad^{n*}, \Omega) \leftarrow 0.
\end{CD}
\end{equation}

Let $W^k$ denote the spaces appearing in (\ref{fred}). Since (\ref{fred}) is a closed Hilbert complex, we immediately get the following Hodge decomposition and Poincare inequality. 

\begin{align*}
W^k &= range(d^{k-1}) \oplus [null(d^k)]^{\perp_W}, \\
V^k &= range(d^{k-1}) \oplus [null(d^k)]^{\perp_V}, \\
\|v\|_{V^k} &\leq C_P\|d^kv\|_{W^k} &&\forall v\in [null(d^k)]^{\perp_V},
\end{align*}
where $[null(d^k)]^{\perp_W}$ is the orthogonal complement of the null space of $d^k$ with respect to the $L^2_r$-inner product,
and $[null(d^k)]^{\perp_V}$ is the orthogonal complement of the null space of $d^k$ with respect to the $V^k$-inner product.

Now, let us define the abstract Hodge Laplacian as in \cite{AFW:2010} in the following way: 
\[
L_k = d^{k-1}\delta_{k}+\delta_{k+1}d^k.
\]
Then, the domain of $L_k$ denoted by $D_L^k$ is
\[
D_L^k = \{u\in V^k\cap V_k^*: d^ku\in V^*_{k+1} \textnormal{ and } \delta_ku\in V^{k-1} \}.
\]
If $u\in D_L^k$ solves the Hodge Laplacian problem $L_ku=f$, then $u\in D_L^k$ satisfies
\[
(d^ku, d^kv)_{L^2_r(\Omega)}  + (\delta_k u, \delta_k v)_{L^2_r(\Omega)}  = (f, v)_{L^2_r(\Omega)}  \quad \forall v \in V^k \cap V_k^*
\]
by (\ref{adjoint2}). This problem is well-posed, since the harmonic forms vanish.

Now, let us state the weighted mixed formulation of the abstract Hodge Laplacian $L_k$:

Find $(\sigma, u) \in V^{k-1}\times V^k$ such that
\begin{equation} \label{cts_mix}
\begin{aligned}
(\sigma,\tau)_{L^2_r(\Omega)}  - (d^{k-1}\tau, u)_{L^2_r(\Omega)}  &= 0,  &&\text{ for all } \tau\in V^{k-1}, \\
(d^{k-1}\sigma,v)_{L^2_r(\Omega)} + (d^ku,d^kv)_{L^2_r(\Omega)}  &= (f,v)_{L^2_r(\Omega)}, &&\text{ for all } v\in V^k.
\end{aligned}
\end{equation}

Since (\ref{fred}) is a closed Hilbert complex, the following theorem follows directly from \cite[Theorem 3.1]{AFW:2010}.
\begin{theorem}
The mixed formulation (\ref{cts_mix}) is well-posed, and for any $f\in L^2_r(\Omega)$, we have
\[
\| u \|_{V^k} + \| \sigma \|_{V^{k-1}} \leq \| f \|_{L^2_r(\Omega)}.
\]
\end{theorem}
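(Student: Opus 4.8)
The plan is to verify that the weighted de Rham complex (\ref{fred}) satisfies every hypothesis of the abstract Hodge-theoretic framework of \cite[Section 3]{AFW:2010} and then to quote \cite[Theorem 3.1]{AFW:2010}. The abstract result requires three things: (i) a Hilbert complex $(W^k,d^k)$ with each $d^k$ closed and densely defined and $d^{k+1}\circ d^k=0$; (ii) closedness of the ranges of the $d^k$; and (iii) an identification of the adjoint complex so that the bilinear forms appearing in (\ref{cts_mix}) coincide with those of the abstract mixed formulation of $L_k=d^{k-1}\delta_k+\delta_{k+1}d^k$.

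First I would assemble the verification of (i)--(iii), all of which is already in hand. The operators $d^k$ in (\ref{formulas}) are closed, densely defined operators between the weighted spaces $L^2_r(\Omega)$ and $\bL^2_r(\Omega)$ with $d^{k+1}\circ d^k=0$, so (\ref{fred}) is a Hilbert complex; since $\operatorname{range}(d^k)$ is finite codimensional in $\operatorname{null}(d^{k+1})$ by \cite[Theorem 2.1]{O:2015}, the complex is Fredholm and hence closed, giving (ii); and by Theorem~\ref{adjoint} together with (\ref{cos_sin}), the $L^2_r$-adjoint of $d^k\colon V^k\to V^{k+1}$ is $\delta_{k+1}\colon V^*_{k+1}\to V^*_k$, which is exactly relation (\ref{adjoint2}) and is what makes (\ref{cts_mix}) the genuine mixed formulation in the sense of \cite{AFW:2010}.

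Next I would record the simplification coming from exactness. Because the domain complex (\ref{domain_complex}) is an exact sequence for every Fourier mode $n>0$ (\cite[Theorem 2.1, Remark 2.1]{O:2015}), the space of harmonic forms $\operatorname{null}(d^k)\cap\operatorname{null}(\delta_k)$ is trivial. Consequently the harmonic-form unknown and side constraint that normally appear in the three-field AFW mixed formulation drop out, leaving precisely the two-field system (\ref{cts_mix}); and the abstract proof --- which builds the coercivity-on-the-kernel and inf-sup (Brezzi) conditions out of the Hodge decomposition and the Poincar\'e inequality $\|v\|_{V^k}\le C_P\|d^kv\|_{W^k}$ --- then delivers well-posedness with a stability constant depending only on $C_P$, yielding the stated estimate $\|u\|_{V^k}+\|\sigma\|_{V^{k-1}}\le C\|f\|_{L^2_r(\Omega)}$. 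At this point the theorem is a direct citation of \cite[Theorem 3.1]{AFW:2010}.

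The only real work is not in the abstract machinery, which is purely formal once the hypotheses hold, but in being sure that the concrete weighted spaces $V^k$ and $V^*_k$ of (\ref{Vk_space})--(\ref{Vk_star}) genuinely are the domains of the closed operators $d^k$ and $\delta_k$ on the weighted $L^2_r$-spaces, and that the boundary conditions imposed in (\ref{Vk_star}) are exactly the ones that make $\delta_{k+1}$ the $L^2_r$-adjoint of $d^k$. These are weighted density and trace statements; I would lean on \cite{Lacoste:2000} and the differential-form computation of subsection~\ref{differential_forms} for them rather than reproving them here.
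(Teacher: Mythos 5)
Your proposal is correct and follows essentially the same route as the paper: the paper's proof is simply the observation that (\ref{fred}) is a closed Hilbert complex (established via the Fredholm argument from \cite{O:2015}) together with the adjoint identification (\ref{adjoint2}), after which the result is a direct citation of \cite[Theorem 3.1]{AFW:2010}. The additional points you verify --- vanishing harmonic forms from exactness and the reduction to the two-field system --- are exactly the supporting remarks the paper records before stating the theorem, so nothing in your argument diverges from the paper's.
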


It is straightforward to check that the four Hodge Laplacian problems arising from our axisymmetric setting correspond to the following problems. 
\begin{itemize}
\item $k=0$: The Neumann Problem for the Axisymmetric Poisson Equation 
\begin{align*}
-\dive\nolimits^{n*}_{rz} \gradrz^n u &= f &&\textnormal{ in } \Omega, \\
\gradrz^n u \cdot \bn &= 0 &&\textnormal{ on } \Gamma_1.
\end{align*} 

\item $k=1$: The Axisymmetric Vector Laplacian $\bcurl\bcurl+\grad\mathrm{div}$
\begin{align*}
-\gradrz^n \dive\nolimits^{n*}_{rz}\bu + \bcurl\nolimits^{n*}\nolimits_{rz}\bcurl\nolimits^n\nolimits_{rz}\bu &= \bff, \\
(\bcurl\nolimits^n\nolimits_{rz}\bu)_{rz} \cdot \bt = 0, (\bcurl\nolimits^n\nolimits_{rz}\bu)_\theta &= 0 \textnormal{ on } \Gamma_1, \\
\bu_{rz} \cdot \bn &= 0 \textnormal{ on } \Gamma_1.
\end{align*}

\item $k=2$: The Axisymmetric Vector Laplacian $\bcurl\bcurl+\grad\mathrm{div}$
\begin{align*}
\bcurl\nolimits^n\nolimits_{rz}\bcurl\nolimits^{n*}\nolimits_{rz}\bu - \gradrz^{n*}\dive\nolimits^n_{rz}\bu &= \bff, \\
\bu_{rz}\cdot\bt = 0, u_\theta = 0, \dive\nolimits^n_{rz}\bu &= 0 \textnormal{ on } \Gamma_1.
\end{align*}

\item $k=3$: The Dirichlet Problem for the Axisymmetric Poisson equation
\begin{align*}
-\dive\nolimits^n_{rz}\gradrz^{n*} u &= f &&\textnormal{ in } \Omega, \\
u &= 0 &&\textnormal{ on } \Gamma_1.
\end{align*} 
\end{itemize}

\section{Approximation of Weighted Hilbert Complexes} \label{section4}

In this section, we will study the discrete weighted mixed formulation of the Hodge Laplacian. We will use the Fourier finite element spaces constructed in \cite{O:2015}. 
Let us first define the following polynomial spaces:
\begin{align*}
A_1 &= \left\{ \alpha_1r+\alpha_2r^2+\alpha_3rz: \alpha_i\in\RRR \text{ for } 1\leq i\leq 3 \right\}, \\
\bB_1 &= \left\{ 
\begin{pmatrix}
       \beta_1 + \beta_4 r + \beta_3 z - \beta_6 rz \\
       -n\beta_1 + \beta_2 r - n\beta_3 z \\
       \beta_5 r + \beta_6 r^2
     \end{pmatrix}: \beta_i\in\RRR \text{ for } 1\leq i \leq 6
\right\}, \\
\bC_1 &= \left\{ 
\begin{pmatrix}
       n\gamma_1 + \gamma_2 r \\
      \gamma_1 + \gamma_3 r \\
       \gamma_4 + \gamma_2 z
     \end{pmatrix}: \gamma_i\in\RRR \text{ for } 1\leq i \leq 4 
\right\}. 
\end{align*}
Assume that $\Omega$ is meshed by a finite element triangulation $\TT_h$ that satisfies the usual geometrical conformity conditions~\cite{fem:ciarlet}.
Then define the following global finite element spaces:
\begin{equation} \label{fem_spaces}
\begin{aligned}
A_h &= \left\{u\in H_r( \grad\nolimits^n , \Omega): u|_K\in A_1 \text{ for all } K\in \TT_h  \right\}, \\
\bB_h &= \left\{\bu\in \bH_r(\bcurl\nolimits^n , \Omega): \bu|_K\in \bB_1 \text{ for all } K\in \TT_h  \right\}, \\
\bC_h &= \left\{\bu\in \bH_r(\dive\nolimits^n , \Omega): \bu|_K\in \bC_1 \text{ for all } K\in \TT_h  \right\}, \\
D_h &= \left\{u\in L^2_r(\Omega): u|_K \text{ is constant for all } K\in \TT_h  \right\}.
\end{aligned}
\end{equation}

The main significance of this family of Fourier finite element spaces is that they make the following diagram commute:
\begin{equation} \label{disc_exact}
\begin{CD}
H_r(\grad^n, \Omega)   @>\mathbf{grad}^n_{rz}>> \bH_r(\bcurl\nolimits^n, \Omega)    @>\bcurl\nolimits^n\nolimits_{rz}>> \bH_r(\dive\nolimits^n, \Omega)     @>\dive^n_{rz}>>   L^{2}_{r}(\Omega) \\ 
@VV I_g^n V                                    @VV I_c^n V                           @VV I_d^n V @ VV I_o V  \\
A_h @> \grad_{rz}^n >> \bB_h @> \bcurl\nolimits^n\nolimits_{rz} >> \bC_h @> \dive_{rz}^n >> D_h
\end{CD}
\end{equation}
where the interpolation operators used in the above commuting diagram are defined in the following way (See \cite[Section 3]{O:2015}):
\begin{align} 
\label{pi_grad}
I_g^n u |_K &= \sum_{i=1}^3 \Big( (\dfrac{n}{r}u)(\ba_i) \Big) \dfrac{r}{n}\lambda_i, \\
\label{pi_curl}
I_c^n \begin{pmatrix}
       u_r \\
       u_\theta \\
       u_z          
     \end{pmatrix} |_K &=
     \sum_{i=1}^3 
     u_\theta (\ba_i)
     \begin{pmatrix}
       -\dfrac{1}{n}\lambda_i \\
       \lambda_i \\
        0          
     \end{pmatrix}  +
     \sum_{i=1}^3
    ( \int_{e_i}
      \begin{pmatrix}
       \frac{nu_r + u_\theta}{r} \\
      \frac{nu_z}{r}     
       \end{pmatrix} \cdot \bt_i)
         \begin{pmatrix}
        \dfrac{r}{n}\nu_i^r \\
        0 \\
        \dfrac{r}{n}\nu_i^z    
       \end{pmatrix}, \\
       \label{pi_div}
       I_d^n \begin{pmatrix}
       u_r \\
       u_\theta \\
       u_z          
     \end{pmatrix} |_K &=
     (\dfrac{1}{|K|}\int_K \dfrac{nu_\theta - u_r}{r})
     \begin{pmatrix}
        0 \\
        \dfrac{r}{n}\chi_K \\
        0          
     \end{pmatrix}  +
     \sum_{i=1}^3
   ( \int_{e_i} 
     \begin{pmatrix}
       u_r \\
       u_z     
       \end{pmatrix} \cdot \bn_i ) 
         \begin{pmatrix}
        \xi_i^r \\
        \dfrac{1}{n}\xi_i^r \\
        \xi_i^z    
       \end{pmatrix}, \\
       \label{pi_l2}
       I_o u |_K &= (\dfrac{1}{|K|}\int_K u drdz) \chi_K.
\end{align}
In the above definition, we are using $\ba_i$ to denote the $i$-th vertex of triangle $K\in\TT_h$, and $\bt_i$ and $\bn_i$ are being used to denote the unit vector tangent and normal to the $i$-th edge of $K$ respectively. 
For each $\bx\in K$, $\lambda_i(\bx)$ denote its barycentric coordinates in $K$ so that $\bx=\sum_{i=1}^3 \lambda_i(\bx) \ba_i$, and
$\boldsymbol{\nu}_i= 
\begin{pmatrix}
\nu_i^r \\
\nu_i^z
\end{pmatrix}=
\lambda_j\nabla\lambda_k-\lambda_k\nabla\lambda_j$ for $(i,j,k)$-circular permutation notation, and
$\boldsymbol{\xi}_i=
\begin{pmatrix}
\xi_i^r \\
\xi_i^z
\end{pmatrix}$ is the rotation of $\boldsymbol{\nu}_i$ counter-clockwise by $\dfrac{\pi}{2}$, i.e.,
\begin{align} \label{xi_nu}
\begin{pmatrix}
\xi_i^r \\
\xi_i^z
\end{pmatrix}
&=
\begin{pmatrix}
-\nu_i^z \\
\nu_i^r 
\end{pmatrix}.
\end{align}
In other words, $\{\boldsymbol{\nu}_i\}$ form a local basis for the lowest order \Nedelec space (\cite{fem:nedelec}), and $\{\boldsymbol{\xi}_i\}$ form a local basis for the lowest order Raviart Thomas space (\cite{RT:1977}).
$\chi_K$ denotes the constant function one on triangle $K$. \textcolor{black}{These interpolation operators also satisfy error estimates which can be found in \cite[Theorem 4.1]{O:2015}. }

\begin{remark}
Note that the finite element space $\bB_h$ was constructed in \cite{Lacoste:2000} to approximate the solution of the axisymmetric time harmonic Maxwell equations, 
but in a different way compared to how it was constructed in \cite{O:2015}. 
\end{remark} 
  
Now let us focus on the discrete subcomplex appearing in (\ref{disc_exact}):
\begin{equation} \label{discrete_complex}
0\rightarrow A_h\xrightarrow{d^0} \bB_h \xrightarrow{d^1} \bC_h \xrightarrow{d^2} D_h \rightarrow 0.
\end{equation}
This is again a closed Hilbert complex, and we can get the weighted Hodge decomposition and the Poincare inequality as we did for the continuous closed Hilbert complex. 
Note that (\ref{discrete_complex}) is also exact. For a unified notation, we use $V_h^k$ to denote the discrete function spaces in (\ref{discrete_complex}), i.e.,
$V_h^0 = A_h, V_h^1 = \bB_h$, etc. 
We will use the notation $(V_h, d)$ to denote this Hilbert subcomplex (\ref{discrete_complex})
while we use $(V,d)$ to denote the Hilbert complex (\ref{domain_complex}).

Now, let us consider the following discrete weighted mixed formulation of the Hodge Laplacian:
find $(\sigma_h, u_h) \in V_h^{k-1}\times V_h^k$ such that
\begin{equation} \label{disc_mix}
\begin{aligned}
(\sigma_h,\tau_h)_{L^2_r(\Omega)}  - (d^{k-1}\tau_h, u_h)_{L^2_r(\Omega)}  &= 0,  &&\tn{ for all } \tau_h\in V_h^{k-1}, \\
(d^{k-1}\sigma_h,v_h)_{L^2_r(\Omega)} + (d^ku_h,d^kv_h)_{L^2_r(\Omega)}  &= (f,v_h)_{L^2_r(\Omega)} , &&\tn{ for all } v_h\in V_h^k.
\end{aligned}
\end{equation}

A uniformly $V$-bounded cochain projection from the complex $(V,d)$
to the subcomplex $(V_h,d)$ is a set of operators $\Pi_h^k: V^k \rightarrow V_h^k$, $k=0,1,2,3$, such that
\begin{equation} \label{bounded}
\begin{aligned}
\Pi_h^k v_h &= v_h &&\textnormal{ for all } v_h\in V_h^k, \\ 
d_k\Pi_h^k &=\Pi_h^{k+1}d_k, \\
\| \Pi_h^kv \|_V &\leq C\| v \|_V &&\textnormal{ for all } v\in V^k,
\end{aligned}
\end{equation}
for some constant $C$ independent of the mesh parameter $h$.
It was established in \cite[Theorem 3.9]{AFW:2010} that a uniformly $V$-bounded cochain projection guarantees the stability of $(\ref{disc_mix})$
along with error estimates between $(\sigma,u)$ of (\ref{cts_mix}) and $(\sigma_h,u_h)$ of (\ref{disc_mix}). 

With this goal in mind, we construct a uniformly $V$-bounded cochain projection from the complex $(V,d)$
to the subcomplex $(V_h,d)$ in the next section. Then, by \cite[Theorems 3.8 and 3.9]{AFW:2010}, we achieve the main result of this paper, the stability result along with abstract error estimates. 

\begin{theorem} \label{main2}
The discrete mixed formulation (\ref{disc_mix}) is stable. Furthermore, if 
$(\sigma,u)\in V^{k-1}\times V^k$ is the solution to (\ref{cts_mix}) and
$(\sigma_h,u_h)\in V^{k-1}_h\times V^k_h$ is the solution to (\ref{disc_mix}), then 
\[
\|\sigma-\sigma_h\|_{V^{k-1}} + \| u-u_h \|_{V^k} \leq C(\inf_{\tau_h\in V_h^{k-1}}\|\sigma-\tau_h\|_{V^{k-1}} + \inf_{v_h\in V_h^{k}}\|u-v_h\|_{V^{k}}). 
\]
\end{theorem}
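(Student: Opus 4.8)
The plan is to reduce the statement to the abstract theory of Arnold, Falk, and Winther. By the remark following (\ref{fred}) and the discussion after (\ref{domain_complex}), the continuous complex $(V,d)$ of (\ref{domain_complex}) is a closed Hilbert complex with vanishing cohomology, and by the paragraph following (\ref{discrete_complex}) the discrete complex $(V_h,d)$ is a closed Hilbert subcomplex of it, again with vanishing cohomology. Granting the existence of a uniformly $V$-bounded cochain projection $\Pi_h^k\colon V^k\to V_h^k$ satisfying (\ref{bounded}) --- which is exactly what Section~\ref{section5} will supply --- the hypotheses of \cite[Theorems 3.8 and 3.9]{AFW:2010} are met. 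Theorem~3.9 of \cite{AFW:2010} then delivers both the uniform (in $h$) well-posedness of (\ref{disc_mix}), i.e. stability, and a quasi-optimal error bound. Because the harmonic spaces $\mathfrak{H}^k$ and $\mathfrak{H}_h^k$ are trivial in our setting, the general AFW estimate --- which in full generality carries extra terms measuring the approximation of harmonic forms and a factor involving the action of $I-\Pi_h^k$ on the harmonic space --- collapses to precisely
\[
\|\sigma-\sigma_h\|_{V^{k-1}}+\|u-u_h\|_{V^k}\;\le\;C\Big(\inf_{\tau_h\in V_h^{k-1}}\|\sigma-\tau_h\|_{V^{k-1}}+\inf_{v_h\in V_h^{k}}\|u-v_h\|_{V^{k}}\Big),
\]
with $C$ depending only on the Poincar\'e constant $C_P$ of (\ref{domain_complex}) and on the boundedness constant of $\Pi_h^k$, hence independent of $h$.

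So the real work is the construction of $\Pi_h^k$, and that is where I would spend the effort. Following the smoothed-projection paradigm of \Schoberl\ \cite{Schob01}, Christiansen--Winther \cite{Christiansen:2008}, and its weighted adaptation in \cite{GO:2012}, I would proceed in three steps. First, build a smoothing operator $R_h^{\veps}$ on each $L^2_r$-based space in (\ref{domain_complex}) by mollifying after a small translation along a suitable mesh-dependent vector field, chosen so that $R_h^{\veps}$ commutes with the three operators $\gradrz^n$, $\bcurlrz^n$, $\dive^{n}_{rz}$ of (\ref{formulas}) and maps into smooth functions while remaining uniformly $L^2_r$-bounded. Second, compose with the canonical interpolants $I_g^n,I_c^n,I_d^n,I_o$ of (\ref{pi_grad})--(\ref{pi_l2}), which commute with $d$ and are well-defined on the smooth output of $R_h^{\veps}$; the composite $Q_h^{\veps}$ (using $I_g^n$, $I_c^n$, $I_d^n$, or $I_o$ according to $k$) is then a bounded, $d$-commuting map $V^k\to V_h^k$, but not a projection. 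Third, show that for $\veps$ a small fixed multiple of $h$ the restriction $Q_h^{\veps}|_{V_h^k}$ is invertible with inverse bounded uniformly in $h$ (a $\veps/h\to 0$ perturbation argument against the identity on $V_h^k$), and set $\Pi_h^k:=\big(Q_h^{\veps}|_{V_h^k}\big)^{-1}Q_h^{\veps}$. Idempotency and the projection property on $V_h^k$ are then immediate, commutativity with $d$ is inherited, and uniform $V$-boundedness follows from the uniform bounds on $R_h^{\veps}$, the interpolants, and the inverse.

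The main obstacle is carrying out this program in the weighted spaces $L^2_r(\Omega)$ with the nonstandard operators (\ref{formulas}) containing the $n/r$ terms, and in particular controlling the behavior near the symmetry axis $\Gamma_0$. Concretely, the translation defining $R_h^{\veps}$ must be arranged so that no point is pushed across $r=0$ and so that the weight $r$ is distorted by at most a uniform factor --- this forces a careful, direction-dependent choice of the smoothing field on elements abutting $\Gamma_0$ together with weighted analogues of the scaled trace and inverse inequalities on such elements. One must also verify that the degrees of freedom in (\ref{pi_grad})--(\ref{pi_l2}), which are evaluated against the twisted combinations $\tfrac{nu_r+u_\theta}{r}$, $\tfrac{nu_z}{r}$, $\tfrac{nu_\theta-u_r}{r}$, remain bounded when applied to $R_h^{\veps}$ of a generic $V^k$ function, which requires quantitative control of these combinations in weighted Sobolev norms after smoothing, with constants uniform in $h$ but necessarily tracking the dependence on the Fourier index $n$. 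Everything else --- the commuting-diagram bookkeeping, the perturbation argument for the inverse, and the final substitution into \cite{AFW:2010} --- parallels the $\theta$-independent, unweighted case and is routine by comparison. I expect the weighted axis estimates, intertwined with the $n/r$ terms, to be the genuinely delicate part, and that is what Section~\ref{section5} must establish.
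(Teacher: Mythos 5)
Your proposal follows essentially the same route as the paper: reduce the theorem to \cite[Theorems 3.8 and 3.9]{AFW:2010} using that the continuous and discrete complexes are closed with vanishing cohomology, and supply the missing ingredient by a \Schoberl--Christiansen--Winther-type smoothed projection adapted to the weighted spaces --- smoothing operators commuting with the operators in (\ref{formulas}), composition with the canonical interpolants (\ref{pi_grad})--(\ref{pi_l2}), and a Neumann-series/perturbation inversion on the discrete spaces to obtain $\Pi_h^k$, exactly as the paper does in Section~\ref{section5}. You also correctly identify the genuinely delicate point, namely the $n/r$ terms and the weighted estimates near $\Gamma_0$, which the paper handles via the $r^3$-weighted functions $\eta_{\ba}$ and $\kappa_i$.
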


\begin{remark}
As usual, the interpolation operators in (\ref{disc_exact}) do not form a $V$-bounded cochain projection,
since it is not well-defined for all functions in $V^k$ but only for functions in $V^k$ with extra regularity.
\end{remark}

\section{A Bounded Cochain Projection for Fourier-FEMs} \label{section5}

In this section, we construct uniformly $V$-bounded cochain projections that satisfy (\ref{bounded}). In fact, the projections constructed here 
are $W$-boounded cochain projections which can be applied to not only all functions in $V^k$ but also in $W^k$. 
In \cite{GO:2012}, the well-known work of Schoberl (\cite{Schob01, Schberl2005NumericalMF}) and Christiansen and Winther (\cite{Christiansen:2008}) 
were modified to weighted function spaces, and we constructed $W$-bounded cochain projections that can be applied to axisymmetric problems with axisymmetric data. 
We now extend the work of \cite{GO:2012} to construct $W$-bounded cochain projections that can be applied to axisymmetric problems with general data.
\textcolor{black}{Before we discuss the detailed construction of these new commuting smoothed projections, 
we first point out the differences between these projections versus the ones already constructed in \cite{GO:2012}.
The projections constructed in \cite{GO:2012} satisfy the commuting diagram property with operators 
\begin{equation} \label{easy}
\begin{aligned}
\gradrz u &= (\partial_r u, \partial_z u)^T, \\
\curlrz (v_r,v_z) &= \partial_zv_r - \partial_rv_z,
\end{aligned}
\end{equation}
while the new projections must satisfy the commuting diagram property with operators in (\ref{formulas}). 
The formulas in $(\ref{easy})$ are significantly simpler than those in (\ref{formulas}), and they are also similar to the standard 
gradient and curl operators in $2D$.  
Operators in (\ref{formulas}) are more complicated and quite different from the standard ones, so special attention is required when constructing smoothed projections to ensure commutativity with these operators.
Furthermore, the multiple $\dfrac{n}{r}$-terms appearing in (\ref{formulas}) cause difficulties in the analysis that one does not have to deal with in \cite{GO:2012}. One of the new ideas used to overcome this difficulty is 
in the use of $r^3$ (instead of $r$) when defining $\eta_a$ in Proposition~\ref{prop:eta} and $\kappa_i$ in (\ref{eq:kappa}). This is explained in detail in Remark~\ref{good}.
}

Let us start by constructing smoothing operators that satisfy the commuting diagram property.
Let $K$ be a triangle in $\TT_h$,
$r(\by)$ denote the value of the radial coordinate
at a point $\by \in \RRR^2_{+}$,
\[
h_K=\diam(K), 
\qquad 
r_K = \max_{\bx \in K}r(\bx),
\]
and $\rho_K$ denote the diameter of the largest circle inscribed in $K$.  
We assume that 
\[
\dfrac{h_K}{\rho_K}<C,
\] 
i.e., the triangular mesh $\TT_h$ is shape regular.
Finally, let $P_l$ denote the space of polynomials of degree at most~$l$ (for
some $l\ge 0$). Throughout this paper, we use $C$ to denote a
generic positive constant that is independent of $\{h_K\}$. 

The first three items in the following proposition can be found in \cite[page 4]{GO:2012} while the fourth item is an extra result that is needed to extend the results in that paper to general axisymmetric problems. 
Its proof can be found in the Appendix (subsection~\ref{inverse_proof}). 

\begin{prop} 
  \label{prop:inverse}
  For all $v\in P_l$,
  \begin{align}
  \label{inv1}
    \| \gradrz v \|^2_{L_r^2(K)}
    &\leq C h_K^{-2} \| v \|^2_{L_r^2(K)}, \\
    \label{inv2}
    \| \gradrz v \|^2_{L^\infty(K)}
    &\leq C h_K^{-2} \| v \|^2_{L^\infty(K)}, \\
    \label{inv3}
    r_K h_K^2 \| v \|^2_{L^\infty(K)} 
     &\leq C \| v \|^2_{L_r^2(K)},
     \end{align}
    Furthermore, if $u\in P_l$ vanishes on $\Gamma_0$, then we have
    \begin{align} \label{inv4} 
     r_K^2 \left\| \frac{u}{r} \right\|^2_{L^2_r(K)} &\leq C\left\| u \right\|_{L^2_r(K)}^2.
     \end{align}
\end{prop}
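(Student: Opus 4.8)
The bounds (\ref{inv1})--(\ref{inv3}) are established in \cite[page~4]{GO:2012} by the standard affine pull-back to a reference triangle, the weight $r$ being affine and hence contributing only a bounded factor on each $K$; so the only genuinely new point is (\ref{inv4}), and I describe how I would prove it. I would split the triangles into those lying away from the axis and those meeting it. If $r_\ast:=\min_{\bx\in K}r(\bx)\ge h_K$, then since $|r(\bx)-r(\by)|\le|\bx-\by|$ we have $r_K\le r_\ast+h_K\le 2r_\ast$, hence $r_K^2/r^2\le 4$ on $K$ and
\[
r_K^2\Big\|\frac{u}{r}\Big\|_{L^2_r(K)}^2=\int_K\frac{r_K^2}{r^2}\,u^2\,r\,drdz\le 4\int_K u^2\,r\,drdz=4\,\|u\|_{L^2_r(K)}^2,
\]
with no use of the hypothesis on $u$. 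The remaining, and essential, case is a triangle $K$ having an entire edge $E=\bar K\cap\{r=0\}$ on $\Gamma_0$ (the only situation in which (\ref{inv4}) is actually invoked), where the hypothesis forces $u\equiv 0$ on $E$.

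For such a $K$, since $u(0,\cdot)$ is a polynomial in $z$ vanishing on the segment $E$, it vanishes identically on $\{r=0\}$, so a Taylor expansion in $r$ gives $u=r\,\tilde u$ with $\tilde u\in P_{l-1}$, and $u/r=\tilde u$ on $K$. Let $F_K\colon\hat K\to K$ be an affine bijection from a fixed reference triangle $\hat K$ carrying a fixed edge $\hat E$ of $\hat K$ onto $E$, and let $J_K$ be its (constant) Jacobian determinant. Since $r|_K$ is an affine function on $K$ vanishing on $E$, it equals $r_K\,\mu_K$, where $\mu_K$ is the barycentric coordinate of $K$ opposite $E$ (the constant being $\max_K r=r_K$); consequently $r\circ F_K=r_K\,\hat\lambda$, with $\hat\lambda$ the barycentric coordinate of $\hat K$ opposite $\hat E$. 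Writing $\hat{\tilde u}=\tilde u\circ F_K\in P_{l-1}(\hat K)$, the change of variables yields
\[
r_K^2\Big\|\frac{u}{r}\Big\|_{L^2_r(K)}^2=r_K^{3}\,J_K\!\int_{\hat K}\hat{\tilde u}^{\,2}\,\hat\lambda,
\qquad
\|u\|_{L^2_r(K)}^2=r_K^{3}\,J_K\!\int_{\hat K}\hat{\tilde u}^{\,2}\,\hat\lambda^{3},
\]
so (\ref{inv4}) reduces to the polynomial inequality $\int_{\hat K}\hat w^{2}\hat\lambda\le C\int_{\hat K}\hat w^{2}\hat\lambda^{3}$ for all $\hat w\in P_{l-1}(\hat K)$.

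This last inequality follows from finite-dimensionality: since $\hat\lambda>0$ on the interior of $\hat K$, the map $\hat w\mapsto\big(\int_{\hat K}\hat w^{2}\hat\lambda^{k}\big)^{1/2}$ is a norm on $P_{l-1}(\hat K)$ for each $k\ge 0$, and all norms on the finite-dimensional space $P_{l-1}(\hat K)$ are equivalent; this gives a constant $C=C(\hat K,l)$ valid for every $K$ (in this case the factors $r_K^{3}$ and $J_K$ cancel, so no shape-regularity is needed here). The only thing to watch is that exactly one power of $r_K$ survives the change of variables, which is precisely what the normalization $r_K^2\|u/r\|_{L^2_r(K)}^2$ in (\ref{inv4}) is designed to absorb; the resulting comparison of the $\hat\lambda$- and $\hat\lambda^{3}$-weighted $L^2$ norms is the $2$D analogue of the cube-weighting that recurs in the definition of $\eta_a$ in Proposition~\ref{prop:eta}. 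I do not foresee a real obstacle: the content lies entirely in the observation that the weight $r$, restricted to a triangle with an edge on the axis, is the affine function $r_K\,\hat\lambda\circ F_K^{-1}$, which makes all scaling factors cancel and turns (\ref{inv4}) into a fixed, $h$-independent polynomial inequality.
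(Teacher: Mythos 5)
Your treatment of (\ref{inv1})--(\ref{inv3}) (citing \cite{GO:2012}) matches the paper, and your two arguments for (\ref{inv4}) are each correct as far as they go: the case $\min_K r\ge h_K$ is handled cleanly, and for a triangle with an entire edge on $\Gamma_0$ the reduction $u=r\tilde u$, the identity $r|_K=r_K\mu_K$ with $\mu_K$ the opposite barycentric coordinate, and the reference-element norm equivalence $\int_{\hat K}\hat w^2\hat\lambda\le C\int_{\hat K}\hat w^2\hat\lambda^3$ do give (\ref{inv4}) with a uniform constant. The gap is that these two cases are not exhaustive, and the parenthetical claim that an edge on $\Gamma_0$ is ``the only situation in which (\ref{inv4}) is actually invoked'' is not correct: in the paper the estimate is applied locally on \emph{every} triangle (indeed on the enlarged sets $D_K$, $C_K$) in the proofs of (\ref{eq:g2}) and (\ref{aaa3}), so the proposition must hold for arbitrary $K\in\TT_h$. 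In particular, every conforming triangulation of $\Omega$ with $\Gamma_0\subset\partial\Omega$ contains triangles that meet the axis at a single vertex; for such $K$ one has $\min_K r=0$, so your first argument fails, and $r|_K$ is an affine function vanishing at one vertex but along no edge, so it is \emph{not} of the form $r_K\mu_K$ with $\mu_K$ a barycentric coordinate and your pull-back computation does not apply. (Triangles with $0<\min_K r<h_K$ are also formally uncovered; those can be absorbed into your first case via shape regularity, but the vertex-touching triangles cannot.)

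The missing case is fixable within your framework, but it needs an argument. Since $u\in P_l$ vanishes on the positive-length segment $\Gamma_0\subset\{r=0\}$, it vanishes on the whole line $r=0$, so $u=r\tilde u$ with $\tilde u\in P_{l-1}$ on \emph{every} triangle, and (\ref{inv4}) becomes $r_K^2\int_K\tilde u^2\,r\,drdz\le C\int_K\tilde u^2\,r^3\,drdz$. After pulling back by $F_K$ the weight $\mu=(r\circ F_K)/r_K$ is an affine function on $\hat K$ with $0\le\mu\le1$ and $\max_{\hat K}\mu=1$, but its zero set may meet $\hat K$ only at a vertex (or not at all); you then need $\int_{\hat K}\hat w^2\mu\le C\int_{\hat K}\hat w^2\mu^3$ \emph{uniformly} over this family of weights, which follows from a compactness/continuity argument on the set of admissible affine $\mu$ together with finite dimensionality of $P_{l-1}(\hat K)$ --- a step your write-up does not contain. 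For comparison, the paper proves (\ref{inv4}) differently: away from the axis it argues as in your first case, and for triangles intersecting the axis it invokes the weighted Hardy-type inequality $\int_K u^2/r^2\,drdz\le C\int_K|\gradrz u|^2\,drdz$ of \cite{MR:1982,Lacoste:2000} for $u$ vanishing on $\Gamma_0$, and then chains it with (\ref{inv1})--(\ref{inv3}) and $r_K\le Ch_K$; that route treats edge-touching and vertex-touching triangles on the same footing, which is exactly the uniformity your proposal is missing.
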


\begin{figure} \label{figfig21}
  \centering
  \begin{tikzpicture}
    \draw[->] (0,0) -- (0,3)  node [anchor=south] {$z$};
    \draw[->] (0,0) -- (3,0)  node [anchor=west] {$r$};
    \coordinate [label=left:$0$]   (o) at (0,0);
    \coordinate [label=left:$\ba$] (a) at (0,1.3);
    \fill (a) circle (2pt);     
    \coordinate (ap) at ($(a)-(0,0.5)$);
   \draw[thick] (ap) arc (-90:90:0.5cm) node [anchor=south]  {\qquad$D_\ba$};
    \draw [dotted] (a)--($(a)+(0.5,0)$) node [anchor=west] {$\rho$};
    \node at (1,-.5) {Case~1};
  \end{tikzpicture}
  \quad
  \begin{tikzpicture}
    \draw[->] (0,0) -- (0,3)  node [anchor=south] {$z$};
    \draw[->] (0,0) -- (3,0)  node [anchor=west] {$r$};
    \coordinate [label=left:$0$]   (o) at (0,0);
    \coordinate [label=below:$\ba$] (a) at (1,1.5);
    \fill (a) circle (2pt);     
   \draw[thick] (a) circle (0.5cm) node [anchor=west] {\quad$D_\ba$};
    \draw [dotted] (a)--($(a)-(0.5,0)$) node [anchor=east] {$\rho$};
    \node at (1,-.5) {Case~2};
  \end{tikzpicture}
    \caption{Domains $D_\ba$ corresponding to point $\ba$.}
    \label{figfig21}
\end{figure}

Let $\ba = (a_r, a_z)$ be a point in $\RRR^2_{+}$ and let $\Da$ be a
closed disk of radius $\rho$, or its half, centered around $\ba$, as shown in 
Figure~\ref{figfig21}. Using this notation, we have the following proposition, and its proof can be found in the Appendix (subsection~\ref{prop_proof}).  

\begin{prop}\label{prop:eta}
There exists a function $\eta_a(r,z)\in P_l$, for any $l\geq 0$, such that
\begin{align}
\label{eta1} 
(r\eta_a,rp)_{L^2_r(D_\ba)} &= p(\ba), &&\textnormal{ for all } p\in P_l, \\
\label{eta2}
\left\| r\eta_a \right\|^2_{L^2_r(D_\ba)} &\leq \dfrac{C}{\rho^2 r_a^3}, &&\textnormal{ where } r_{\ba} = \left\{
     \begin{array}{ll}
       \rho,  &  \textnormal{ in Case 1, }  \\
       \displaystyle{\min_{\by \in \Da} \;r(\by)},  
       &  \textnormal{ in Case 2,}
     \end{array}
     \right. \\
\label{eta3}
\int_{D_\ba} r^3(\by)|\eta_{\ba}(\by)| d\boldsymbol{y} &\leq C.
\end{align}
\end{prop}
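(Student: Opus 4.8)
The plan is to construct $\eta_a$ explicitly as a scaled $L^2_r$-dual basis function on the small disk $D_\ba$, mimicking the argument for the analogous statement (with weight $r$ instead of $r^3$) in \cite{GO:2012}, and then to verify the three estimates by scaling to a reference configuration. First I would set up the bilinear form $b(p,q) = (rp, rq)_{L^2_r(D_\ba)} = \int_{D_\ba} p\,q\, r^3\,drdz$ on the finite-dimensional space $P_l$. Since $r^3 > 0$ on $D_\ba$ in both cases (in Case~1 the disk touches the axis only along its diameter, a set of measure zero, so $b$ is still positive definite on $P_l$), $b$ is an inner product on $P_l$, and the Riesz representation theorem applied to the functional $p \mapsto p(\ba)$ produces a unique $\eta_a \in P_l$ satisfying \eqref{eta1}. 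This gives existence immediately; the content is entirely in the quantitative bounds \eqref{eta2} and \eqref{eta3}.

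For the estimates, the plan is a change of variables to a reference disk $\hat D$ of radius $1$. In Case~2 I would write $\by = \ba + \rho\hat\by$; then $r(\by) = a_r + \rho\hat r$, and since $a_r \geq r_\ba \geq \rho$ we have $r(\by) \asymp r_\ba$ throughout $D_\ba$, up to a constant factor independent of $\ba,\rho$. In Case~1 ($a_r = 0$, half-disk) I would use $\by = \rho\hat\by$, so $r(\by) = \rho\hat r$ and $r_\ba = \rho$. Under this scaling $b(p,q) = \rho^{2+3}\int_{\hat D} \hat p\,\hat q\, (\text{[}r_\ba/\rho + \hat r\text{]})^3\, d\hat\by$ in Case~2, which is comparable to $\rho^2 r_\ba^3 \,\hat b(\hat p, \hat q)$ with $\hat b$ a fixed reference inner product (here $\hat b(\hat p,\hat q) = \int_{\hat D}\hat p\hat q\,d\hat\by$ or $\int_{\hat D}\hat p\hat q\,\hat r^3 d\hat\by$ as appropriate); similarly in Case~1 one gets exactly $\rho^5\hat b$ with $\hat b(\hat p,\hat q)=\int_{\hat D}\hat p\hat q\,\hat r^3 d\hat\by$. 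The defining relation \eqref{eta1} transforms to $\rho^2 r_\ba^3\,\hat b(\hat\eta, \hat p) \asymp \hat p(\hat\ba)$ for all $\hat p \in P_l$, where $\hat\ba$ is the center of $\hat D$; since all norms on the finite-dimensional $P_l$ are equivalent and point evaluation at the (fixed) center is a bounded functional, the reference dual function $\hat\eta$ satisfies $\|\hat\eta\|_{\hat b} \leq C$ and $\int_{\hat D}\hat r^3|\hat\eta|\,d\hat\by \leq C$ with $C$ depending only on $l$. Scaling back, $\|r\eta_a\|^2_{L^2_r(D_\ba)} = b(\eta_a,\eta_a) \asymp (\rho^2 r_\ba^3)^{-1}\|\hat\eta\|_{\hat b}^2 \leq C(\rho^2 r_\ba^3)^{-1}$, which is \eqref{eta2}; and $\int_{D_\ba} r^3|\eta_a|\,d\by$ scales as $\rho^{2+3}(\rho^2 r_\ba^3)^{-1}\int_{\hat D}(\text{[}r_\ba/\rho+\hat r\text{]})^3|\hat\eta|\,d\hat\by \leq C$ after using $r_\ba \asymp \rho$ in Case~1 and, in Case~2, bounding $(r_\ba/\rho + \hat r)^3 \leq C(r_\ba/\rho)^3$ so that $\rho^5 \cdot (r_\ba/\rho)^3 \cdot (\rho^2 r_\ba^3)^{-1} = 1$; this gives \eqref{eta3}.

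The main obstacle, and the place requiring genuine care, is tracking the powers of $\rho$ and $r_\ba$ separately in Case~2, where $r(\by)$ is \emph{not} proportional to $\rho$ but is instead comparable to $r_\ba$ (which can be much larger than $\rho$). The switch from the weight $r$ used in \cite{GO:2012} to the weight $r^3$ here is exactly what forces the $r_\ba^{-3}$ rather than $r_\ba^{-1}$ in \eqref{eta2}, and one must confirm that the reference inner product $\hat b$ is uniformly equivalent (constants depending only on $l$, not on $\ba$ or $\rho$) to a \emph{fixed} inner product on $P_l$ in both cases — this is where shape regularity and the disk geometry enter, and where the Case~1/Case~2 dichotomy in the definition of $r_\ba$ is essential. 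Once the scaling bookkeeping is done correctly, the finite-dimensionality of $P_l$ does all the remaining work, so I do not expect any further difficulty. I would relegate the detailed computation to the Appendix as the excerpt indicates, presenting only the reference-configuration reduction and the scaling identities in the main argument.
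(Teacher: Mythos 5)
Your construction is correct and follows essentially the same route as the paper's own proof: existence of $\eta_\ba$ as the Riesz representer of point evaluation for the inner product $(p,q)\mapsto\int_{D_\ba}pq\,r^3\,drdz$ on $P_l$ (the paper phrases this by defining a dual polynomial on a reference domain), then a scaling to a unit (half-)disk with the Case~1/Case~2 dichotomy, and the reproducing identity $\left\| r\eta_\ba \right\|^2_{L^2_r(D_\ba)}=\eta_\ba(\ba)$ to pass from the reference bound to (\ref{eta2}). The one step where you genuinely diverge is the Case~2 uniformity argument. You obtain it from the pointwise comparability $r\asymp r_\ba$ on $D_\ba$, so that the scaled weight $(a_r/\rho+\hat r)^3$ (note it is $a_r/\rho+\hat r$, not $r_\ba/\rho+\hat r$ as you wrote -- a harmless slip since $a_r=r_\ba+\rho$) is comparable to $(r_\ba/\rho)^3$ and the reference form is uniformly equivalent to a fixed one; this needs $r_\ba\geq\rho$, i.e.\ item~(\ref{four}) of the smoothing-domain selection criteria. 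The paper instead compares the weighted reference representer $\hat{\eta}_{\ba,\star}$ with the unweighted one $\hat{\eta}_{\ba,1}$ through a Cauchy--Schwarz duality step, which yields $\hat{\eta}_{\ba,\star}(\mathbf{0})\leq C\,r_\ba^{-3}$ and hence (\ref{eta2}) with no lower bound on $r_\ba/\rho$ at all. Your reliance on $r_\ba\gtrsim\rho$ is nonetheless consistent with the paper's own usage, since its proof of (\ref{eta3}) in Case~2 invokes $r_{\ba,max}/r_\ba\leq 3$, i.e.\ (\ref{threethreethree}), which rests on the same selection criterion. In short, your pointwise comparison is a bit simpler but ties the proposition to the already-chosen smoothing domains, while the paper's duality trick makes (\ref{eta2}) independent of that choice; apart from the cosmetic $a_r$ versus $r_\ba$ offset, there is no gap.
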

Note that (\ref{eta1}) implies that
\begin{equation} \label{eta4}
\int_{D_\ba} r^3(\by)\eta_\ba(\by) d\by = 1.
\end{equation}
Next, we will describe how to choose a $D_\ba$ for each mesh vertex $\ba$ in $\TT_h$. 
Let $\delta>0$ be a global parameter. 
\begin{itemize}
\item If $\ba$ is on the $z$-axis, then $D_\ba$ is set as in Case~1 of Figure~\ref{figfig21} with $\rho=h\delta$.
\item If $\ba$ is not on the $z$-axis, then $D_\ba$ is set as in Case~2 of Figure~\ref{figfig21} with $\rho=h\delta$. 
\end{itemize}
Next, let $\Omega_\ba$ denote the vertex patch of the mesh vertex $\ba$. We choose $\delta>0$ small enough that the following conditions are satisfied for all $D_\ba$. 
\begin{enumerate} \label{numbers}
\item $D_\ba\subset\Omega_\ba$ for all vertices $\ba$ that are not on $\Gamma_1$.
\item $D_\ba\cap \Omega \subset\Omega_\ba$ for all vertices $\ba$ that are on $\Gamma_1$.
\item $D_\ba$'s of different mesh vertices do not overlap.
\item\label{four} $r_a \geq \delta h$
\end{enumerate}
Note that, for $\ba$'s that lie on $\Gamma_1$, $D_\ba$ will not be fully in $\Omega$ but in an extension of $\Omega$ denoted by $\widetilde{\Omega}$. 
Let $V^k(\Omega)$ be the same as $V^k$ defined in (\ref{Vk_space}), and $V^k(\widetilde{\Omega})$ be defined in a similar way but on $\widetilde{\Omega}$ instead of $\Omega$.
Throughout this paper, we will assume that there exists a set of extension operators $E^k:V^k(\Omega)\rightarrow V^k(\widetilde{\Omega})$ for all $k=0,1,2,3$ that satisfy the following properties:
\begin{equation} \label{assumption}
\begin{aligned}
E^ku(r,z) &= u(r,z) \textnormal{ for all } (r,z)\in\Omega, \\ 
d^k \circ E^k &= E^{k+1} \circ d^k, \\
\|E^ku\|_{L^2_r(\widetilde{\Omega})} &\leq C\| u \|_{L^2_r(\Omega)}.
\end{aligned}
\end{equation}
For the rest of the paper, in the case when $\ba$ of interest is on $\Gamma_1$ and thus $D_\ba\subset\widetilde{\Omega}$, it is assumed that the extension $E^ku$ is used in place of $u$ when it is being evaluated at $D_\ba$.
The assumptions (\ref{assumption}) assures that Proposition~\ref{prop_comm} and Lemma~\ref{lem:Rbounds} remain true as it is stated even with such use of $E_k$. 

\begin{remark}
Since $\ba\in\Gamma_1$ is on the natural boundary, we may construct $E_k$ by modifying \cite[page 65]{AFW:2006} that uses a Lipschitz continuous bijection to the axisymmetric setting discussed in subsection~\ref{differential_forms}.
\end{remark}


We need some more notations to define the smoothing operators.  We use
$[\cdot, \cdot, \cdot]$ to denote the convex hull of its arguments.  Accordingly, a
triangle $K\in\TT_h$ with vertices $\ba_1$, $\ba_2$, and $\ba_3$ is $K=[\ba_1,
\ba_2, \ba_3]$.  Its three edges are $\fe_1 = [\ba_2, \ba_3]$, $\fe_2
= [\ba_3, \ba_1]$, and $\fe_3 = [\ba_1, \ba_2]$. Let $ D_{\ba_i}$ be
the smoothing domains introduced above for the vertex~$\ba_i$
$(i=1,2,3$), and let $\by_i\in D_{\ba_i}$.  
Set
\begin{equation}
  \label{eq:kappa}
  \kappa_i(\by_i) = r^3(\by_i)\eta_{\ba_i}(\by_i),
\end{equation}
where $\eta_{\ba_i}$ is the function given by 
Proposition~\ref{prop:eta} with $\ba$ equal to $\ba_i$.  
We write $\kappa_{123} =
\kappa_1\kappa_2\kappa_3$ and $\kappa_{12} = \kappa_1\kappa_2$, etc.
When it is more convenient to use a $(i,j,k)$-circular permutation notation, we will use $i, j, k$
instead of $1, 2, 3$ above, i.e., $K=[\ba_i, \ba_j, \ba_k]$, $\fe_i=[\ba_j, \ba_k]$, etc.

Recall that, for each $\bx\in K$, $\lambda_i(\bx)$ denote its barycentric
coordinates in~$K$ so that 
\[
\bx = \sum_{i=1}^3 \lambda_i(\bx)\ba_i.
\]
Following~\cite{Schob01}, we now define~$\xt$ by
\begin{equation} \label{xt}
 \xt(\bx, \by_1, \by_2, \by_3) = \sum_{i=1}^3 \lambda_i(\bx)\by_i
\end{equation}
and introduce these mesh dependent smoothers:

\begin{definition}
\begin{align}
\label{def:Sg}      S^gu(\bx) &= \dfrac{r}{n}\triint \kappa_{123} \;\left(\dfrac{nu}{r}\right)(\xt)\; \trid \\
\label{def:Sc}         S^c\bv(\bx) &= 
      \begin{bmatrix}
      \Bigg[\dfrac{r}{n}\triint \kappa_{123} 
      \left(\dfrac{\bd\xt}{\bd\bx}\right)^T 
      \begin{bmatrix} 
       \dfrac{nv_r+v_\theta}{r}(\xt) \\ \\ 
       \dfrac{nv_z}{r}(\xt) 
      \end{bmatrix}\; \trid\Bigg]_{r}   \\  -\dfrac{1}{n}\triint\kappa_{123} v_\theta(\xt)\trid  \\ \\ \\ \\
      \triint\kappa_{123} v_\theta(\xt)\trid \\ \\ \\ \\
       \Bigg[\dfrac{r}{n}\triint \kappa_{123} 
      \left(\dfrac{\bd\xt}{\bd\bx}\right)^T 
      \begin{bmatrix} 
       \dfrac{nv_r+v_\theta}{r}(\xt) \\ \\
          \dfrac{nv_z}{r}(\xt) 
      \end{bmatrix}\; \trid \Bigg]_{z} 
      \end{bmatrix} \\ 
\label{def:Sd}          S^d\bw(\bx) &=
      \begin{bmatrix}
      \Bigg[\triint \kappa_{123} \dett\left(\dfrac{\bd\xt}{\bd\bx}\right) \left(\dfrac{\bd\xt}{\bd\bx}\right)^{-1}
      \begin{bmatrix} 
      w_r(\xt) \\ 
       w_z(\xt) 
      \end{bmatrix} \trid\Bigg]_{r}  \\ \\ \\
      \dfrac{1}{n}\Bigg[\triint \kappa_{123} \dett\left(\dfrac{\bd\xt}{\bd\bx}\right) \left(\dfrac{\bd\xt}{\bd\bx}\right)^{-1}
      \begin{bmatrix} 
       w_r(\xt) \\ 
       w_z(\xt) 
      \end{bmatrix} \trid\Bigg]_{r}  \\ + \dfrac{r}{n}\triint \kappa_{123} \dett\left(\dfrac{\bd\xt}{\bd\bx}\right) \left(\dfrac{nw_\theta-w_r}{r}\right)(\xt) \trid \\ \\ \\
        \Bigg[\triint \kappa_{123} \dett(\dfrac{\bd\xt}{\bd\bx})(\dfrac{\bd\xt}{\bd\bx})^{-1}
      \begin{bmatrix} 
       w_r(\xt) \\ 
       w_z(\xt) 
      \end{bmatrix} \trid\Bigg]_{z} 
      \end{bmatrix} \\ 
\label{def:So}     S^ou(\bx) &= \triint \kappa_{123} \;\dett\left(\dfrac{\bd\xt}{\bd\bx}\right) u(\xt) \trid
\end{align}
  \end{definition}

\begin{prop} \label{prop_comm}
The smoothing operators satisfy a commuting diagram property:
\begin{align}
\label{com1} \grad\nolimits_{rz}\nolimits^n(S^gu) &= S^c(\grad\nolimits_{rz}\nolimits^nu), \\
\label{com2} \bcurl\nolimits_{rz}^n(S^c\bu) &= S^d(\bcurl\nolimits_{rz}^n\bu), \\
\label{com3} \dive\nolimits_{rz}\nolimits^n(S^d\bu) &= S^o(\dive\nolimits_{rz}\nolimits^n\bu).
\end{align}
\end{prop}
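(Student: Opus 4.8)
The plan is to verify (\ref{com1})--(\ref{com3}) as pointwise identities on each triangle $K=[\ba_1,\ba_2,\ba_3]\in\TT_h$, first for smooth arguments and then by density. Fix such a $K$ and a point $\bx$ near~$K$. The one structural fact that drives the whole argument is that $\bx\mapsto\xt(\bx,\by_1,\by_2,\by_3)=\sum_{i=1}^{3}\lambda_i(\bx)\by_i$, as defined in (\ref{xt}), is \emph{affine}, so its Jacobian
\[
M \;=\; \frac{\bd\xt}{\bd\bx} \;=\; \sum_{i=1}^{3}\by_i\,(\nabla\lambda_i)^{T}
\]
depends on $\by_1,\by_2,\by_3$ and on $K$ but is \emph{constant in~$\bx$}; the weight $\kappa_{123}$ of (\ref{eq:kappa}) is likewise $\bx$-independent. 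Hence every $\bx$-differentiation appearing on the left of (\ref{com1})--(\ref{com3}) may be moved under the triple integral $\triint(\cdot)\trid$ in the definitions (\ref{def:Sg})--(\ref{def:So}), reducing each claim to a pointwise identity between integrands.

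The analytic core is then furnished by the three affine-pullback identities for the two-dimensional de~Rham complex: for a \emph{constant} matrix $M$ and smooth $f$, $\bg=(g_r,g_z)$, $\bh=(h_r,h_z)$, with $\xt$ as above,
\[
\begin{aligned}
M^{T}\bigl((\gradrz f)\circ\xt\bigr) &= \gradrz\bigl(f\circ\xt\bigr),\\
\curlrz\bigl(M^{T}(\bg\circ\xt)\bigr) &= (\det M)\,\bigl((\curlrz\bg)\circ\xt\bigr),\\
\dive\bigl((\det M)\,M^{-1}(\bh\circ\xt)\bigr) &= (\det M)\,\bigl((\dive\bh)\circ\xt\bigr),
\end{aligned}
\]
the operators on the left being taken in~$\bx$. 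Each of these says that pullback by an affine map commutes with the exterior derivative on $0$-, $1$-, and $2$-forms, and each follows from the chain rule together with the adjugate identity $M\cdot(\det M)M^{-1}=(\det M)I$. The combinations $M^{T}[\,\cdot\,]$, $(\det M)M^{-1}[\,\cdot\,]$, $(\det M)[\,\cdot\,]$ are exactly the ones built into $S^{c}$, $S^{d}$, $S^{o}$.

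What is left is the bookkeeping with the $\tfrac{n}{r}$- and $\theta$-components, and this I expect to be the main obstacle. For (\ref{com1}) it is short: with $\bw=\gradrz^{n}u$ one checks the elementary relations $\tfrac{nw_r+w_\theta}{r}=\partial_r\bigl(\tfrac{nu}{r}\bigr)$, $\tfrac{nw_z}{r}=\partial_z\bigl(\tfrac{nu}{r}\bigr)$, $w_\theta=-\tfrac{nu}{r}$. The first two say that the reduced $(r,z)$-kernel of $S^{c}$ evaluated at $\bw$ is the plain gradient of $\tfrac{nu}{r}$, so the first pullback identity applies; the third makes the $\theta$-components of $\gradrz^{n}(S^{g}u)$ and $S^{c}(\gradrz^{n}u)$ agree directly; and the prefactor $\tfrac{r}{n}$ in $S^{g}$, struck by $\partial_r$, produces exactly the correction term $-\tfrac1n\triint\kappa_{123}w_\theta(\xt)\trid$ in the first slot of $S^{c}$. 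For (\ref{com2}) and (\ref{com3}) the mechanism is the same but longer: one expands $\bcurlrz^{n}(S^{c}\bv)$, resp. $\dive\nolimits^{n}_{rz}(S^{d}\bw)$, component by component, applies the pullback identities, and checks that the residual $\tfrac{n}{r}$- and $\theta$-contributions reassemble into the reduced kernels of the form $\tfrac{n(\cdot)_\theta-(\cdot)_r}{r}$ sitting inside $S^{d}$, resp. $S^{o}$, now evaluated at $\bcurlrz^{n}\bv$, resp. $\dive\nolimits^{n}_{rz}\bw$. The crux is that these reduced kernels are genuine two-dimensional curls/divergences of the reduced data, which is what licenses the affine-pullback identities; the differential-forms realisation of the complex in subsection~\ref{differential_forms} is designed to make this transparent, and one may alternatively carry the whole computation out there by invoking naturality of~$d$ under pullback. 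Doing this while tracking all prefactors correctly is where the real work lies.

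Finally, to pass from smooth arguments to all of $V^{k}$ (and $W^{k}$): the identities hold classically for smooth data, and $S^{g},S^{c},S^{d},S^{o}$ are bounded on the relevant $L^{2}_{r}$-spaces (a consequence of the kernel bounds of Proposition~\ref{prop:eta}); hence, taking smooth $u_m\to u$ in $V^{k}$, we get $S^{\bullet}u_m\to S^{\bullet}u$ and $S^{\bullet}(d^{k}u_m)\to S^{\bullet}(d^{k}u)$ in $L^{2}_{r}$, and letting $m\to\infty$ in $d^{k}(S^{\bullet}u_m)=S^{\bullet}(d^{k}u_m)$, with the left-hand side read distributionally, gives the identity for~$u$. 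For a mesh vertex $\ba$ on $\Gamma_{1}$ one evaluates on $D_\ba$ via the extension $E^{k}u$, and the commutation $d^{k}\circ E^{k}=E^{k+1}\circ d^{k}$ from (\ref{assumption}) is exactly what makes the preceding steps go through unchanged.
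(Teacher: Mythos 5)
Your proposal is correct and follows essentially the same route as the paper: verify (\ref{com1}) via the elementary relations for the reduced data $\tfrac{nw_r+w_\theta}{r}=\partial_r(\tfrac{nu}{r})$, $\tfrac{nw_z}{r}=\partial_z(\tfrac{nu}{r})$, $w_\theta=-\tfrac{nu}{r}$, the covariant pullback identity $M^T(\gradrz(\tfrac{nu}{r}))(\xt)=\gradrz((\tfrac{nu}{r})(\xt))$, and the product-rule term from the prefactor $\tfrac{r}{n}$, then handle (\ref{com2})--(\ref{com3}) by the analogous covariant/Piola/determinant pullback identities, which is exactly how the paper argues (it too works out only (\ref{com1}) and refers to the chain rule and the covariant and Piola transformations for the rest). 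Your added density and extension remarks are harmless extra rigor beyond what the paper records.
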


\begin{proof}
The results follow by construction, so we will only include the proof of (\ref{com1}) here. 
By the definition of $S^g$, $S^c$, and $\grad\nolimits_{rz}\nolimits^n$, we have
\begin{equation} \label{cp1}
\begin{aligned}
&S^c(\grad\nolimits_{rz}\nolimits^n u) = S^c ((\partial_r u, -\frac{nu}{r}, \partial_z u)^T) \\
&= \begin{bmatrix}
     \footnotesize \Bigg[\dfrac{r}{n} \triint \kappa_{123} \left(\dfrac{\bd\xt}{\bd\bx}\right)^T 
        \begin{bmatrix} 
      \frac{\partial_r (nu)}{r}-\frac{nu}{r^2} \\
        \frac{\partial_z(nu)}{r}
      \end{bmatrix}(\xt)  \trid\Bigg]_{r} + \dfrac{1}{n} \triint \kappa_{123} (\dfrac{nu}{r})(\xt) \trid   \\ \\
     \footnotesize -\triint \kappa_{123} (\dfrac{nu}{r})(\xt) \trid \\ \\
     \footnotesize  \Bigg[\dfrac{r}{n} \triint \kappa_{123} \left(\dfrac{\bd\xt}{\bd\bx}\right)^T 
     \begin{bmatrix} 
      \frac{\partial_r (nu)}{r}-\frac{nu}{r^2} \\
        \frac{\partial_z(nu)}{r}
      \end{bmatrix}(\xt) \trid\Bigg]_{z}
      \end{bmatrix}  \\
&=      \begin{bmatrix}
     \footnotesize \Bigg[\dfrac{r}{n} \triint \kappa_{123} \left(\dfrac{\bd\xt}{\bd\bx}\right)^T 
        \begin{bmatrix} 
      \partial_r (\frac{nu}{r}) \\
       \partial_z (\frac{nu}{r})
      \end{bmatrix}(\xt)  \trid\Bigg]_{r} + \dfrac{1}{n} \triint \kappa_{123} (\dfrac{nu}{r})(\xt) \trid   \\ \\
     \footnotesize -\triint \kappa_{123} (\dfrac{nu}{r})(\xt) \trid \\ \\
     \footnotesize  \Bigg[\dfrac{r}{n} \triint \kappa_{123} \left(\dfrac{\bd\xt}{\bd\bx}\right)^T 
     \begin{bmatrix} 
      \partial_r (\frac{nu}{r}) \\
       \partial_z (\frac{nu}{r})
      \end{bmatrix}(\xt) \trid\Bigg]_{z}
      \end{bmatrix}
\end{aligned}
 \end{equation}
 and
 \begin{equation} \label{cp2}
 \begin{aligned}
 \grad\nolimits_{rz}\nolimits^n (S^g u)
 &= \grad\nolimits_{rz}\nolimits^n\big(\dfrac{r}{n}\triint \kappa_{123} \;\left(\dfrac{nu}{r}\right)(\xt)\; \trid \big) \\
 &=  \begin{bmatrix}
     \footnotesize \partial_r \Bigg[\dfrac{r}{n}\triint \kappa_{123} \;\left(\dfrac{nu}{r}\right)(\xt)\; \trid\Bigg] \\ \\
      \footnotesize -\triint \kappa_{123} (\dfrac{nu}{r})(\xt) \trid \\ \\
       \footnotesize \partial_z \Bigg[\dfrac{r}{n}\triint \kappa_{123} \;\left(\dfrac{nu}{r}\right)(\xt)\; \trid\Bigg] 
      \end{bmatrix} \\
&=  \begin{bmatrix}
     \footnotesize \dfrac{r}{n} \partial_r \Bigg[\triint \kappa_{123} \;\left(\dfrac{nu}{r}\right)(\xt)\; \trid\Bigg] + \dfrac{1}{n}\triint \kappa_{123} \;\left(\dfrac{nu}{r}\right)(\xt)\; \trid \\ \\
      \footnotesize -\triint \kappa_{123} (\dfrac{nu}{r})(\xt) \trid \\ \\
       \footnotesize \dfrac{r}{n}\partial_z \Bigg[\triint \kappa_{123} \;\left(\dfrac{nu}{r}\right)(\xt)\; \trid\Bigg] 
      \end{bmatrix}.
 \end{aligned}
 \end{equation}
Since      
\[
\left(\dfrac{\bd\xt}{\bd\bx}\right)^T\Big(\grad\nolimits_{rz} \big(\dfrac{nu}{r}\big)\Big)(\xt) = \grad\nolimits_{rz}\Big(\big(\dfrac{nu}{r})(\xt)\Big),
\]
it follows that (\ref{cp1}) and (\ref{cp2}) are equal, and this completes the proof of (\ref{com1}).
The remaining results are proved in a similar way by using chain rule and the covariant transformation and the Piola transformation. (See \cite[Lemma 14]{Schberl2005NumericalMF}.)
\end{proof}

\begin{definition} 
\begin{align}
\label{Rhg} R_h^gu &= I_g^nS^gu, \\
\label{Rhc} R_h^c\bu &= I_c^nS^c\bu, \\
\label{Rhd} R_h^d\bu &= I_d^nS^d\bu, \\
\label{Rho} R_h^ou &= I_oS^ou.
\end{align}
\end{definition}

\begin{lemma}
  \label{lem:Rbounds}
There exists a constant $C$ independent of $h$ and $\delta$ such that
\begin{align}
  \label{eq:g1}
  \left\| R^g_hu \right\|^2_{L^2_r(\Omega)}
  & \leq \frac{C}{\delta^5} \left\| u \right\|^2_{L^2_r(\Omega)},
  &&\forall\; u \in L^2_r(\Omega),
  \\
  \label{eq:g2}
  \left\| R^g_hu_h - u_h \right\|_{L^2_r(\Omega)}^2 
  & \leq C\delta^2\left\| u_h \right\|_{L^2_r(\Omega)}^2,
  && \forall\;
  u_h \in A_h,
  \\ 
  \label{eq:c1}
  \left\| R^c_h\bv \right\|^2_{L^2_r(\Omega)}
  & \leq \frac{C}{\delta^5}\left\| \bv \right\|^2_{L^2_r(\Omega)},
  && \forall\; \bv \in L^2_r(\Omega)\times L^2_r(\Omega)\times L^2_r(\Omega),
  \\
  \label{eq:c2}
  \left\| R^c_h\bv_h - \bv_h \right\|_{L^2_r(\Omega)}^2
  & \leq C\delta^2\left\| \bv_h \right\|_{L^2_r(\Omega)}^2,
  &&\forall\; \bv_h \in \bB_h,
  \\
    \label{eq:d1}
  \left\| R^d_h\bv \right\|^2_{L^2_r(\Omega)}
  & \leq \frac{C}{\delta^3}\left\| \bv \right\|^2_{L^2_r(\Omega)},
  && \forall\; \bv \in L^2_r(\Omega)\times L^2_r(\Omega)\times L^2_r(\Omega),
  \\
  \label{eq:d2}
  \left\| R^d_h\bv_h - \bv_h \right\|_{L^2_r(\Omega)}^2
  & \leq C\delta^2\left\| \bv_h \right\|_{L^2_r(\Omega)}^2,
  &&\forall\; \bv_h \in \bC_h,
  \\
  \label{eq:o1}
  \left\| R^o_hw \right\|^2_{L^2_r(\Omega)}
  & \leq \frac{C}{\delta^3}\left\| w \right\|^2_{L^2_r(\Omega)},
  &&\forall \; w \in L^2_r(\Omega),
  \\
  \label{eq:o2}
  \left\| R^o_hw_h - w_h \right\|_{L^2_r(\Omega)}^2
  & \leq C\delta^2\left\| w_h \right\|_{L^2_r(\Omega)}^2,
  && \forall\;  w_h \in D_h.
\end{align}
\end{lemma}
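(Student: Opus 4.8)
The plan is to follow the template of \cite{GO:2012}, reducing each of the eight estimates to two ingredients: local bounds from $L^2_r$ into $L^\infty$ for the four smoothers $S^g,S^c,S^d,S^o$ on a single triangle, and the explicit degrees of freedom defining $I_g^n,I_c^n,I_d^n,I_o$ in (\ref{pi_grad})--(\ref{pi_l2}) together with the inverse inequalities of Proposition~\ref{prop:inverse}. Fix $K=[\ba_1,\ba_2,\ba_3]$ and set $\widetilde K=D_{\ba_1}\cup D_{\ba_2}\cup D_{\ba_3}$; by the containment and non-overlap conditions imposed on the smoothing domains, $\widetilde K$ is a patch of diameter $O(h)$ lying in the union of the vertex patches of $K$, and the $\widetilde K$ have bounded overlap as $K$ ranges over $\TT_h$.

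\textbf{Step 1 (local smoother bounds).} For each smoother I would first prove a pointwise estimate on $K$ of the form $\| S^\bullet v\|_{L^\infty(K)}\le C\,\delta^{-a}h^{-b}\,\| v\|_{L^2_r(\widetilde K)}$, with the powers $a,b$ recorded explicitly. In $\triint\kappa_{123}(\cdots)(\xt)\,\trid$ one substitutes $\by_i\mapsto\xt$ in the inner integral over the vertex $\ba_i$ whose barycentric coordinate $\lambda_i(\bx)\ge\tfrac13$ is largest — the Jacobian $\lambda_i(\bx)^2$ is then bounded below — and the remaining two integrals are absorbed using $\int_{D_{\ba_j}}r^3|\eta_{\ba_j}|\le C$ from (\ref{eta3}), which together with the normalization (\ref{eta4}) exhibits $S^\bullet$ as a signed average of unit total mass. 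The resulting planar integral $\int\kappa_i(\cdots)\,v(\xt)\,d\xt$ is bounded by a weighted Cauchy--Schwarz inequality in the measure $r\,d\xt$, which brings in the $L^2_r$-norm of a fixed negative power of $r$ times $\eta_{\ba_i}$; because of the $\tfrac{n}{r}$-weighted integrands, for $S^g$ and $S^c$ this power is exactly $r\,\eta_{\ba_i}$, so (\ref{eta2}) with $\rho=h\delta$ and $r_{\ba}\ge\delta h$ (condition~(\ref{four})) controls it, whereas for $S^d,S^o$, whose determinant and Piola factors are $O(1)$ on $\widetilde K$, one uses a milder power and only (\ref{eta3}).

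\textbf{Steps 2--3 (interpolation).} Next I would apply the interpolation operator: each degree of freedom of $I_g^n,I_c^n,I_d^n,I_o$ is a point value, an edge integral, or an element integral of $S^\bullet v$, possibly weighted by $\tfrac{n}{r}$ or $\tfrac{r}{n}$; on $K$ these weights are comparable to $n/r_K$ or $r_K/n$ up to shape-regularity constants, using $r_{\ba_i}\ge\delta h$ near the axis, so Step~1 controls every coefficient in the basis expansion of $R_h^\bullet v|_K$. Estimating that expansion in $L^2_r(K)$ by (\ref{inv3}) — and by (\ref{inv4}) for the $\tfrac{u}{r}$-type contributions, which vanish on $\Gamma_0$ and hence satisfy its hypothesis — yields $\| R_h^\bullet v\|_{L^2_r(K)}^2\le C\delta^{-p}\| v\|_{L^2_r(\widetilde K)}^2$ with $p=5$ for $g,c$ and $p=3$ for $d,o$; summation over $\TT_h$ gives (\ref{eq:g1}), (\ref{eq:c1}), (\ref{eq:d1}), (\ref{eq:o1}). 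For the approximation estimates, since $I_g^n,I_c^n,I_d^n,I_o$ reproduce $A_h,\bB_h,\bC_h,D_h$, one has $R_h^\bullet v_h-v_h=I^\bullet(S^\bullet v_h-v_h)$ for $v_h$ in the relevant discrete space; the smoothers reproduce constants by (\ref{eta4}), so a first-order Taylor expansion inside the defining integral gives $\| S^\bullet v_h-v_h\|_{L^\infty(K)}\le C\delta h\,\|\gradrz v_h\|_{L^\infty(\widetilde K)}$ (the same argument applied to $\tfrac{nv_h}{r}$ handles the $\tfrac{n}{r}$-weighted components), and the inverse inequalities (\ref{inv1})--(\ref{inv2}) bound the right-hand side by $C\delta\,\| v_h\|_{L^\infty(\widetilde K)}$. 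Pushing $S^\bullet v_h-v_h$ through $I^\bullet$ exactly as before, then invoking (\ref{inv3}), produces $\| R_h^\bullet v_h-v_h\|_{L^2_r(K)}^2\le C\delta^2\| v_h\|_{L^2_r(\widetilde K)}^2$, and summation finishes (\ref{eq:g2}), (\ref{eq:c2}), (\ref{eq:d2}), (\ref{eq:o2}).

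\textbf{Main obstacle.} The delicate part — and where I expect the real work to be — is the bookkeeping of powers of $r$ near the axis $\Gamma_0$, forced by the numerous $\tfrac{n}{r}$ terms in (\ref{formulas}), in the degrees of freedom (\ref{pi_grad})--(\ref{pi_l2}), and in the smoothers (\ref{def:Sg})--(\ref{def:So}). The mollification weight $r\,\eta_{\ba}$ used in \cite{GO:2012} for the simpler operators (\ref{easy}) does not control the $\tfrac{n}{r}$-weighted components of $S^c$ and $S^d$; it is precisely the choice $\kappa_i=r^3\eta_{\ba_i}$ (cf.\ Remark~\ref{good}), together with the lower bound $r_{\ba}\ge\delta h$ of condition~(\ref{four}) and the weighted inverse estimate (\ref{inv4}), that keeps all such terms finite, at the cost of the somewhat worse $\delta$-powers $\delta^{-5}$ and $\delta^{-3}$ in place of a single uniform one.
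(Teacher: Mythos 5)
Your overall architecture (bound the degrees of freedom of $I_g^n,I_c^n,I_d^n,I_o$ applied to the smoothed field via weighted Cauchy--Schwarz with $\kappa_i=r^3\eta_{\ba_i}$, then use Proposition~\ref{prop:inverse} and sum over $\TT_h$) is the same as the paper's, and your $T_l$-type device of changing variables $\by_i\mapsto\xt$ in the integral attached to the vertex with $\lambda_i(\bx)\ge\tfrac13$ is exactly the trick the paper uses for the element-average degree of freedom. But there is a genuine gap in your Step~2 reduction. You claim that the $\tfrac{n}{r}$ (and $\tfrac1r$) weights appearing in the degrees of freedom are ``comparable to $n/r_K$ up to shape-regularity constants, using $r_{\ba_i}\ge\delta h$ near the axis.'' This is false on triangles meeting $\Gamma_0$: condition~(\ref{four}) bounds the parameter $r_{\ba}$ of the smoothing disk, not $r(\bx)$ on $K$, which vanishes at the axis. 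At an on-axis vertex the point functional $(\tfrac{n}{r}\,\cdot)(\ba_i)$ of (\ref{pi_grad}) is not controlled by any $L^\infty$ bound of $S^gu$, and for the element functional $\tfrac{1}{|K|}\int_K\tfrac{n(\cdot)_\theta-(\cdot)_r}{r}$ of (\ref{pi_div}) the weight is not even integrable on a triangle with an edge on the axis, so no comparability argument (nor one costing extra powers of $\delta^{-1}$, which would also ruin the stated exponents) can rescue the step. What actually makes the lemma work is the exact algebraic cancellation of $1/r$ against the $r$-factors built into the smoothers: the quantities entering the degrees of freedom are $\tfrac{S^gu}{r}(\ba_i)$, the analogous $S^c$ combinations, and $\tfrac{(S^d\bu)_\theta-(S^d\bu)_r}{r}$, and by the structure of (\ref{def:Sg})--(\ref{def:Sd}) these are smoothed averages of $u$ itself (not of $u/r$) against the kernel $\kappa_i=r^3\eta_{\ba_i}$, which is then split as in Remark~\ref{good} and estimated by (\ref{eta2})--(\ref{eta3}). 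For the stability bounds (\ref{eq:g1}), (\ref{eq:c1}), (\ref{eq:d1}) the input is a general $L^2_r$ function, so the inverse estimate (\ref{inv4}) you invoke for $u/r$-type terms is unavailable; the cancellation must come from the smoother, and your proposal never exhibits it.

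Two smaller inaccuracies in the same direction: for $S^d$ and $S^o$ you cannot get by with ``only (\ref{eta3})'' --- passing from the triple integral to an $L^2_r$ norm of $\bv$ still requires Cauchy--Schwarz with $\|r\eta_{\ba_i}\|_{L^2_r(D_{\ba_i})}$, i.e.\ (\ref{eta2}), exactly as in the paper's derivation of (\ref{dof1}) and (\ref{dof2}); and ``pushing $S^\bullet v_h-v_h$ through $I^\bullet$'' for the estimates (\ref{eq:c2}), (\ref{eq:d2}) again runs into the $1/r$-weighted functionals, which the paper handles at the level of the degrees of freedom (the divergence-theorem argument on $L_{jk}$ and the symmetric-difference area bound for $K$ versus $\Kt_{\by}$), with (\ref{inv4}) applicable only because the discrete functions vanish on $\Gamma_0$. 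Your concluding paragraph correctly identifies the $r$-bookkeeping as the crux, but the proof as proposed replaces that crux with a comparability shortcut that fails precisely where the difficulty lives.
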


\begin{proof}

We will only prove (\ref{eq:g1}), (\ref{eq:g2}), (\ref{eq:d1}), and (\ref{eq:d2}) in detail here
by modifying the the proof of \cite[Lemma 4.2]{GO:2012}.
In particular, we will combine the techniques used to prove multiple items in \cite[Lemma 4.2]{GO:2012}.
Items (\ref{eq:c1}), (\ref{eq:c2}), (\ref{eq:o1}), and (\ref{eq:o2}) 
can be proved in a similar way.
For simplicity, we assume that $n=1$ throughout this proof. 

Let $K=[\ba_1,\ba_2,\ba_3]$ be a fixed triangle in $\TT_h$.
We first we prove (\ref{eq:g1}) and (\ref{eq:g2}).
By (\ref{Rhg}) and (\ref{pi_grad}), we have
\[
R_h^gu |_{K} = \sum_{i=1}^3 \Bigg[(\dfrac{S^gu}{r})(\ba_i)\Bigg] r\lambda_i.
\] 
Therefore, 
\begin{equation} \label{one}
\begin{aligned}
\Lrnk{R_h^gu}^2 &= \Lrnk{\sum_{i=1}^3 \Bigg[\dfrac{S^gu}{r}(\ba_i)\Bigg] r\lambda_i}^2 \\
&\leq \sum_{i=1}^3 |\dfrac{S^gu}{r}(\ba_i)|^2 \int_K (r\lambda_i)^2 rdrdz \\
&\leq C\sum_{i=1}^3 |\dfrac{S^gu}{r}(\ba_i)|^2 r_K^3 h^2  &&\textnormal{ since } |\lambda_i|\leq 1 \textnormal{ for all } i=1, 2, 3.
\end{aligned}
\end{equation}
Let us first bound $|\dfrac{S^gu}{r}(\ba_i)|$. By definition, we have
\begin{equation} \label{xta}
\xt(\ba_i,\by_1,\by_2, \by_3)=\by_i \textnormal{ for all } i=1, 2, 3,
\end{equation}
and so by (\ref{def:Sg}), we have
\begin{equation} \label{two}
\begin{aligned}
|\dfrac{S^gu}{r}(\ba_i)| 
&= |\triint \kappa_{123} \; \dfrac{u}{r} (\by_i)\; \trid| &&\textnormal{ by (\ref{xta}) } \\
&= |\int_{D_\bai} \kappa_i(\by_i) \dfrac{u}{r}(\byi) d\byi| &&\tn{ by (\ref{eta4}) } \\
&= |\int_{D_\bai} (r\eta_\bai)(\byi) u(\byi) r(\byi) d\byi| &&\tn{ since } \kappa_i=r^3\eta_\bai \\
&\leq \|r\eta_\bai\|_{L_r^2(D_\bai)} \|u\|_{L_r^2(D_\bai)} \\
&\leq \dfrac{C}{\sqrt{\rho^2r_\bai^3}}  \|u\|_{L_r^2(D_\bai)} &&\tn{ by Propostion~\ref{prop:eta} item~(\ref{eta2}). }
\end{aligned}
\end{equation}
Therefore, continuing from (\ref{one}), we get
\begin{equation} \label{local}
\begin{aligned}
\Lrnk{R_h^gu}^2 &\leq \sum_{i=1}^3 \dfrac{C}{\rho^2r_\bai^3}  \|u\|^2_{L_r^2(D_\bai)} r_K^3 h^2 &&\tn{ by (\ref{two}) } \\
&\leq \dfrac{C}{\delta^2}\|u\|^2_{L_r^2(D_K)}\sum_{i=1}^3(\dfrac{r_K}{r_\bai})^3 &&\tn{ since $\rho=h\delta$ } \\
&\leq \dfrac{C}{\delta^5}\|u\|^2_{L_r^2(D_K)},
\end{aligned}
\end{equation}
where $D_K$ is the union of $D_{\ba_1}, D_{\ba_2}$, and $D_{\ba_3}$.
The last inequality above follows since $r_K\leq r_\bai + Ch$, so we have
\begin{equation}  \label{rkra}
\dfrac{r_K}{r_\bai} \leq \dfrac{r_\bai + Ch}{r_\bai} = 1 +  \dfrac{Ch}{r_\bai} \leq 1+\dfrac{C}{\delta} \leq \dfrac{C}{\delta},
\end{equation}
where we are using item~(\ref{four}) of the selection criteria of the smoothing domains.
By summing up over all triangles, and by using the shape regularity of $\TT_h$, (\ref{local}) implies (\ref{eq:g1}).

Now let $u_h$ be any function in $A_h$. 
We first find a bound for the degrees of freedom of $R_h^gu_h-u_h$:
\begin{equation} \label{dofdof1}
\begin{aligned}
|\dfrac{S^gu_h}{r}(\bai) - (\dfrac{u_h}{r})(\bai)| &= |\int_{D_\bai} \kappa_i (\dfrac{u_h}{r}(\byi) - \dfrac{u_h}{r}(\bai)) d\byi| \\
&\leq  \mathrm{max}_{\byi\in D_\bai} |\dfrac{u_h}{r}(\byi) - \dfrac{u_h}{r}(\bai)| \int_{D_\bai} r^3|\eta_\bai|  d\byi  \\
&\leq C\mathrm{max}_{\byi\in D_\bai} |(\dfrac{u_h}{r}(\byi) - \dfrac{u_h}{r}(\bai))| &&\tn{ by (\ref{eta3}) } \\
&\leq C\mathrm{max}_{\byi\in D_\bai}|\bai-\byi|\|\gradrz\dfrac{u_h}{r}\|_{L^\infty(D_\bai)} \\
&\leq Ch\delta\|\gradrz\dfrac{u_h}{r}\|_{L^\infty(D_\bai)} &&\tn{ since } \byi\in D_\bai.
\end{aligned}
\end{equation}
Recall that, by definition of $A_h$, $u_h=0$ on $\Gamma_0$ and $\dfrac{u_h}{r}\in P_1$.
Therefore, 
\begin{equation}
\begin{aligned}
\Lrnk{R_h^gu_h-u_h}^2 &= \Lrnk{\sum_{i=1}^3\Bigg[\dfrac{S^gu_h}{r}(\ba_i)-\dfrac{u_h}{r}(\bai)\Bigg] r\lambda_i}^2 \\
&\leq    \sum_{i=1}^3|\dfrac{S^gu_h}{r}(\ba_i)-\dfrac{u_h}{r}(\bai)|^2\int_K (r\lambda_i)^2  rdrdz \\
&\leq \sum_{i=1}^3 Ch^2\delta^2\|\gradrz\dfrac{u_h}{r}\|_{L^\infty(D_\bai)}^2h_K^2r_K^3 &&\tn{ by (\ref{dofdof1}) and since $|\lambda_i|\leq 1$ } \\
&\leq Ch^2\delta^2\|\gradrz\dfrac{u_h}{r}\|_{L^\infty(D_K)}^2h_K^2r_K^3 \\
&\leq Ch^2\delta^2\|\gradrz\dfrac{u_h}{r}\|_{L^2_r(D_K)}^2r_K^2 &&\tn{ by (\ref{inv3})  } \\
&\leq C\delta^2\|\dfrac{u_h}{r}\|_{L^2_r(D_K)}^2r_K^2 &&\tn{ by (\ref{inv2})  } \\
&\leq C\delta^2\|u_h\|_{L^2_r(D_K)}^2 &&\tn{ by (\ref{inv4}),  } 
\end{aligned}
\end{equation}
and this completes the proof of (\ref{eq:g2}).

Next, let us prove (\ref{eq:d1}). To do so, it suffices to show that 
\[
\Lrnk{R_h^d\bu}^2 \leq \dfrac{C}{\delta^3}\|\bu\|^2_{C_K}.
\]
By (\ref{Rhd}) and (\ref{pi_div}), we have
\begin{equation} \label{rhduk}
\begin{aligned}
R_h^d\bu |_K 
&= \left[\dfrac{1}{|K|}\int_K \dfrac{(S^d\bu)_\theta - (S^d\bu)_r}{r}\right]
     \begin{pmatrix}
        0 \\
        r\chi_K \\
        0          
     \end{pmatrix}  +
     \sum_{i=1}^3
   \left[ \int_{e_i} 
     \begin{pmatrix}
       (S^d\bu)_r \\
       (S^d\bu)_z     
       \end{pmatrix} \cdot \bn_i ds \right]
         \begin{pmatrix}
        \xi_i^r \\
        \xi_i^r \\
        \xi_i^z    
       \end{pmatrix}.
\end{aligned}
\end{equation}
With this in mind, we show that the following bounds are true.
\begin{align}
\label{dof1}
\int_{e_i} 
     \begin{pmatrix}
       (S^d\bu)_r \\
       (S^d\bu)_z     
       \end{pmatrix} \cdot \bn_i  ds &\leq \left(\dfrac{C}{\delta\sqrt{r_{\ba_j}}}+\dfrac{C}{\delta\sqrt{r_{\ba_k}}}\right)\|\bu\|_{L^2_r(C_K)}, \\
\label{dof2}
\dfrac{1}{|K|}\int_K \dfrac{(S^d\bu)_\theta-(S^d\bu)_r}{r} d\bx &\leq C\Big(\dfrac{1}{h\sqrt{r_{\ba_1}^3}} + \dfrac{1}{h\sqrt{r_{\ba_2}^3}} +\dfrac{1}{h\sqrt{r_{\ba_3}^3}}  \Big) \| \bu \|_{L^2_r(C_K)},
\end{align}
where $C_K$ denotes the convex hull of $D_{\ba_1}, D_{\ba_2}$, and $D_{\ba_3}$. 
We will first derive (\ref{dof1}). Using the $(i,j,k)$-circular permutation notation and the notations $\by_j=(y_j^r, y_j^z)$ and $\by_k=(y_k^r, y_k^z)$, we have
\begin{equation} \label{dof1_1}
\begin{aligned}
|\int_{\be_i} (S^d\bu)_{rz}\cdot\bn_i ds|
&= |\int_{D_\baj}\int_{D_\bak} \kappa_{jk} \int_{[\by_j,\by_k]} 
\bu_{rz}\cdot \bn_i
dSd\by_kd\by_j| &&\textnormal{ by (\ref{def:Sd})} \\
&= |\int_{D_\baj}\int_{D_\bak} \kappa_{jk} \int_0^1
\bu_{rz}((1-s)\by_j+s\by_k)\cdot  
\begin{pmatrix}
     y_j^z-y_k^z \\
     y_k^r-y_j^r    
       \end{pmatrix}
dsd\by_kd\by_j| \\
&\leq Ch \int_{D_\baj}\int_{D_\bak} |\kappa_{jk}|\int_0^1 |\bu_{rz}((1-s)\by_j+s\by_k)| dsd\by_kd\by_j \\
&\leq Ch \int_{D_\baj}\int_{D_\bak} |\kappa_{jk}|(\int_0^{\frac{1}{2}} + \int_{\frac{1}{2}}^1) |\bu_{rz}((1-s)\by_j+s\by_k)| dsd\by_kd\by_j \\
&= A + B,
\end{aligned}
\end{equation}
where we have broken the integral with respect to $s$ into two integrals, one over $s\in[0,1/2]$ ($A$) and the other over $s\in [1/2,1]$ ($B$).
Before we proceed, we introduce another notation. 
For any vertex $\ba$, $r_{\ba,max}$ denotes $\max_{\by\in D_{\ba}}r(\by)$.
It is straightforward to check that 
\begin{equation} \label{threethreethree}
\dfrac{r_{\ba,max}}{r_\ba}\leq 3.
\end{equation}
Now let us consider the case when $0\leq s\leq \frac{1}{2}$  to get an estimate for $A$. 
\begin{equation} \label{dofdof1_2}
\begin{aligned}
A &= Ch \int_{D_\baj}\int_{D_\bak} |\kappa_{jk}|\int_0^{\frac{1}{2}} |\bu_{rz}((1-s)\by_j+s\by_k)| dsd\by_kd\by_j \\ 
&= Ch\int_{D_\bak} |\kappa_k| \int_0^{\frac{1}{2}} \int_{D_\baj} |\kappa_j|  |\bu_{rz}((1-s)\by_j+s\by_k)|  d\by_j dsd\by_k \\
&= Ch\int_{D_\bak} |\kappa_k| \int_0^{\frac{1}{2}} \int_{D_\baj} |r(\by_j)\eta_{\baj}(\by_j)| |r(\by_j) \bu_{rz}((1-s)\by_j+s\by_k)| r(\by_j) d\by_j dsd\by_k \\
&\leq Ch\int_{D_\bak} |\kappa_k| \int_0^{\frac{1}{2}}  \|r\eta_\baj\|_{L_r^2(D_\baj)}  \|r(\by_j)\bu_{rz}((1-s)\by_j+s\by_k)\|_{L_r^2(D_\baj)}dsd\by_k \\
&\leq  Ch\|r\eta_\baj\|_{L_r^2(D_\baj)}r_{\baj,max} \int_{D_\bak} |\kappa_k| \int_0^{\frac{1}{2}} \Big( \int_{D_\baj}(\bu_{rz}((1-s)\by_j+s\by_k))^2 r(\by_j) d\by_j \Big)^{\frac{1}{2}} dsd\by_k \\
&\leq   Ch\|r\eta_\baj\|_{L_r^2(D_\baj)}r_{\baj,max}\int_{D_\bak} |\kappa_k| \int_0^{\frac{1}{2}} \Big(\int_{\bZ_{kj}} \bu_{rz}(\bz)^2 r(\bz) d\bz\Big)^{\frac{1}{2}}dsd\by_k  \tn{ by change of variables. } \\
\end{aligned}
\end{equation}
In the last inequality above, we are using a change of variables from $\by_j$ to $\bz=(1-s)\by_j+s\by_k$. Since $0\leq s\leq 1/2$ we have
\[
r(\by_j) \leq 2(1-s)r(\by_j) \leq 2(1-s)r(\by_j)+2sr(\by_k) \leq 2r((1-s)\by_j+s\by_k) = 2r(\bz)
\]
and  
\[
(1-s)^{-2} \leq 4
\]
to bound the Jacobian. 
Continuing from (\ref{dofdof1_2}), we get
\begin{equation} \label{dof1_2}
\begin{aligned}
A &\leq \dfrac{Chr_{\baj,max}}{h\delta\sqrt{r_\baj^3}} \|\bu_{rz}\|_{L^2_r(C_K)} \int_{D_\bak} |\kappa_k| \int_0^{\frac{1}{2}} dsd\by_k  &&\textnormal{ by (\ref{eta2}) and since } \bZ_{kj}\subset C_K \\
&\leq \dfrac{Chr_{\baj,max}}{h\delta\sqrt{r_\baj^3}} \|\bu_{rz}\|_{L^2_r(C_K)} &&\tn{ by (\ref{eta3}) } \\
&\leq \dfrac{C}{\delta\sqrt{r_\baj}}\|\bu\|_{L^2_r(C_K)} &&\textnormal{ by (\ref{threethreethree}).}
\end{aligned}
\end{equation}
We get a similar results as (\ref{dof1_2}) for $B$ as well, and this proves (\ref{dof1}).

To prove (\ref{dof2}) let us first introduce the following notation:
\[
T_l=\{\bx\in K: \lambda_l(x)>\frac{1}{3} \} \tn{ for } 1\leq l\leq 3. 
\]
Since
\[
\dfrac{1}{|K|}\int_K \dfrac{(S^d\bu)_\theta-(S^d\bu)_r}{r} d\bx \leq \dfrac{1}{|K|}\sum_{l=1}^3|\int_{T_l} \dfrac{(S^d\bu)_\theta-(S^d\bu)_r}{r} d\bx|, 
\]
it suffices to show that the bound holds for each $\dfrac{1}{|K|}|\int_{T_l} \dfrac{(S^d\bu)_\theta-(S^d\bu)_r}{r} d\bx|$. 
\begin{equation*}
\begin{aligned}
&|\int_{T_1}\dfrac{(S^d\bu)_\theta-(S^d\bu)_r}{r}d\bx_1| \\
&= |\int_{T_1}\triint \kappa_{123}\dJ \Big[\dfrac{u_\theta-u_r}{r} \Big](\xt) d\by_3d\by_2d\by_1d\bx| \quad\quad\textnormal{ by (\ref{def:Sd})} \\
&\leq \iDaj\iDak |\kappa_{23}| \Big(\int_{T_1}\iDai r(\by_1)|\eta_{\ba_1}(\by_1)|  r(\by_1)\left|\dfrac{u_\theta-u_r}{r} (\xt)\right| r(\xt) \left|\dJ\right| d\by_1d\bx \Big) d\by_3d\by_2 \\
&\leq \iDaj\iDak |\kappa_{23}| \Big(\int_{T_1} \| r\eta_{\ba_1} \|_{L_r^2(D_{\ba_1})} \| (u_\theta-u_r)(\xt) \|_{L_r^2(D_{\ba_1})} \left|\dJ\right| d\bx \Big) d\by_3d\by_2 \\ 
&\leq \| r\eta_{\ba_1} \|_{L_r^2(D_{\ba_1})} \sqrt{|T_1||\dJ|}  \\ 
&\quad\quad\quad \cdot \iDaj\iDak |\kappa_{23}| \Big( \int_{T_1}\iDai ((u_\theta-u_r)(\xt))^2 r(\by_1) | \dJ | d\by_1d\bx  \Big)^{1/2}d\by_3d\by_2 \\
&\leq  \| r\eta_{\ba_1} \|_{L_r^2(D_{\ba_1})} \sqrt{|T_1||\dJ|}  \\
&\quad\quad\quad \cdot\iDaj\iDak |\kappa_{23}| \Big(\iDai\int_{\widetilde{T}_1} ((u_\theta-u_r)(\bz))^2 r(\bz)  d\bz d\by_1  \Big)^{1/2}d\by_3d\by_2  \tn{ by change of variables } \\
&\leq \dfrac{C}{\sqrt{\rho^2 r_{\ba_1}^3}} \sqrt{Ch^2} \sqrt{\rho^2}\| u_\theta-u_r \|_{L^2_r(C_K)} \quad\quad\tn{ since $(area(D_{\ba_1}))^{1/2}=\sqrt{\pi\rho^2}$ } \\
&\leq \dfrac{Ch}{\sqrt{r_{\ba_1}^3}}\| \bu \|_{L^2_r(C_K)}.
\end{aligned}
\end{equation*}
The change of variables used above in particular is from $\bx\in T_1$ to $\bz=\xt$.
The second inequality above is true, since 
\[
r(\by_1)=\dfrac{1}{\lambda_1(\bx)}\cdot\lambda_1(\bx)\cdot r(\by_1) < 3r(\lambda_1(\bx)\by_1) \leq 3r(\xt),
\]
as $1/\lambda_1(\bx)<3$ for all $\bx\in T_1$. 
We are also using above that $\widetilde{T}_1$, the image of $T_1$ under the map $\bx\rightarrow \xt$, satisfies $\tilde{T}_1\subset C_K$ and that
\[
|\dJ|=\dfrac{|\tilde{T}_1|}{|T_1|}\leq C.
\]

Similar results can be shows for $T_2$ and $T_3$, and we get
\[
\dfrac{1}{|K|}\int_K \dfrac{(S^d\bu)_\theta-(S^d\bu)_r}{r} d\bx \leq C\Big(\dfrac{1}{h\sqrt{r_{\ba_1}^3}} + \dfrac{1}{h\sqrt{r_{\ba_2}^3}} +\dfrac{1}{h\sqrt{r_{\ba_3}^3}}  \Big) \| \bu \|_{L^2_r(C_K)}.
\]
Let us now complete the proof of (\ref{eq:d1}) by using (\ref{dof1}) and (\ref{dof2}). 
It is true that
\begin{equation} \label{aa}
\begin{aligned}
\int_K \big(\sum_{i=1}^3 \int_{\be_i} ((S^d\bu)_{rz}\cdot \bn_i  dS) \begin{bmatrix}
\xi_i^r \\
\xi_i^r \\
\xi_i^z
\end{bmatrix}\big)^2 rdrdz 
&\leq  \sum_{i=1}^3 |\int_{\be_i} (S^d\bu)_{rz}\cdot \bn_i  dS|^2 
\int_K \begin{bmatrix}
\xi_i^r \\
\xi_i^r \\
\xi_i^z
\end{bmatrix}^2 rdrdz \\
&\leq \dfrac{Cr_K}{\delta^2r_\ba}\Big \|\bu\|_{L^2_r(C_K)}^2 &&\tn{ by (\ref{dof1}) } \\
&\leq \dfrac{C}{\delta^3}\|\bu\|_{L^2_r(C_K)} &&\textnormal{ by (\ref{rkra}).}
\end{aligned}
\end{equation}
In the second to the last inequality above, we are using $r_\ba$ to denote the minimal value of $r_\bai, r_\baj$, and $r_\bak$,
and we are using the fact that $(\xi_i^r,\xi_i^z)$ is the local basis functions for the lowest order Raviart Thomas space in two-dimensions, and thus satisfies
\begin{equation} \label{RT}
\| (\xi_i^r,\xi_i^z) \|^2_{L^2_r(K)} \leq Cr_K.
\end{equation}
It is also true that 
\begin{equation} \label{aaa}
\begin{aligned}
&\int_K \big( \dfrac{1}{|K|}\int_K \dfrac{(S^d\bu)_\theta-(S^d\bu)_r}{r} d\bx r\chi_K \big)^2 rdrdz \\
&\leq r_K^3  \big( \dfrac{1}{|K|}\int_K \dfrac{(S^d\bu)_\theta-(S^d\bu)_r}{r} d\bx \big)^2 \int_K \chi_K^2 drdz \\
&\leq \dfrac{Cr_K^3}{h^2r_\ba^3}\|\bu\|_{L^2_r(C_K)}\cdot Ch^2 &&\tn{ by (\ref{dof2})}  \\
&\leq \dfrac{C}{\delta^3}\|\bu\|_{L^2_r(C_K)} &&\tn{ by (\ref{rkra}). } 
\end{aligned}
\end{equation}
The proof of (\ref{eq:d1}) is completed by (\ref{aa}), (\ref{aaa}), and (\ref{rhduk}).

Finally, we prove (\ref{eq:d2}). 
Let $\bu_h\in \bC_h$ be written as $\bu_h=(u_h^r,u_h^\theta,u_h^z)^T$. We will find bounds for the degrees of freedom that defines $R_h^d\bu_h-\bu_h$.
Let $L_{jk}=[\baj,\bak,\by_k,\by_j,\baj]$ (See Figure~\ref{fig:Larea}.) It should be clear that
\[
\mathop\mathrm{area}(L_{jk}) \leq Ch(h\delta)
\]
and
\[
\mathop\mathrm{length}([\baj,\by_j]) \leq Ch\delta \quad\quad \mathop\mathrm{length}([\bak,\by_k]) \leq Ch\delta.
\]
Also, we use $\dive\nolimits_{rz}$ to denote the usual divergence operator in the $(r,z)$-plane, i.e.,
\[
\dive\nolimits_{rz}(v_r,v_z) = \partial_rv_r + \partial_zv_z.
\]
\begin{figure}
  \centering
    \begin{tikzpicture} 
    \coordinate [label=right: $\ba_k$] (ak) at (0,0);
    \coordinate [label=left:$\ba_j$] (aj) at (-5,0);
    \coordinate [label=below:$\by_k$] (yk) at (-0.4,-0.35);
    \coordinate [label=left:$\by_j$] (yj) at (-5.2,0.5);

    \draw [fill=yellow!30] (aj)--(ak)--(yk)--(yj)--cycle 
          node[anchor=west] {\qquad\qquad$L_{jk}$};

    \draw (ak) circle (1cm);
    \draw (aj) circle (1cm);

    \fill (ak) circle (2pt);
    \fill (aj) circle (2pt);

    \fill (yk) circle (2pt);    
    \fill (yj) circle (2pt);

    \end{tikzpicture}
  \caption{$L_{jk}$}
  \label{fig:Larea}
\end{figure}

It follows that
\begin{equation}  \label{aa2}
\begin{aligned}
&\Big| \iDajj\iDakk \kappa_{jk} \Big( \int_{[\by_j,\by_k]} (\bu_h)_{rz}\cdot\bn_i dS - \int_{[\baj,\bak]} (\bu_h)_{rz}\cdot\bn_i dS \Big) d\by_kd\by_j\Big| \\
&\leq \iDajj\iDakk |\kappa_{jk}| \Big(|\int_{L_{jk}}\dive(\bu_h)_{rz} dA| + |\int_{[\baj,\by_j]}(\bu_h)_{rz}\cdot\bn_i dS| + |\int_{[\bak,\by_k]}(\bu_h)_{rz}\cdot\bn_i dS|  \Big)d\by_kd\by_j \\ &\quad\quad\tn{ by the Divergence Theorem } \\
&\leq \iDajj\iDakk |\kappa_{jk}| \Big( Ch(h\delta)\| \dive(\bu_h)_{rz} \|_{L^\infty(C_K)} + Ch\delta\| (\bu_h)_{rz} \|_{L^\infty(C_K)} \Big)d\by_kd\by_j \\
&\leq Ch\delta\| (\bu_h)_{rz} \|_{L^\infty(C_K)} \quad\quad\tn{ by (\ref{eta3}) and an inverse inequality.}  
\end{aligned}
\end{equation}
Therefore, 
\begin{equation} \label{aa3}
\begin{aligned}
\int_K \Big(\sum_{i=1}^3 (\int_{\be_i} (R_h^d\bu_h-\bu_h)_{rz}\cdot\bn_i) \begin{bmatrix}
\xi_i^r \\
\xi_i^r \\
\xi_i^z
\end{bmatrix}\Big)^2 rdrdz &\leq Ch^2\delta^2\| (\bu_h)_{rz} \|_{L^\infty(C_K)}r_K &&\tn{ by (\ref{aa2}) and (\ref{RT}) } \\
&\leq C\delta^2\|\bu_h\|_{L^2_r(C_K)} &&\tn{ by (\ref{inv3}). }
\end{aligned}
\end{equation}
Next, 
\begin{equation} \label{rhdrhd} 
\begin{aligned}
&\dfrac{1}{|K|}\int_K \dfrac{(R_h^d\bu_h - \bu_h)_\theta-(R_h^d\bu_h - \bu_h)^r}{r} d\bx \\
&=\dfrac{1}{|K|} \int_K\triint |\kappa_{123}| \Big( \dJ (\dfrac{u_h^\theta-u_h^r}{r})(\xt) - (\dfrac{u_h^\theta-u_h^r}{r})(\bx) \Big)d\by_3d\by_2d\by_1d\bx \\
&=\dfrac{1}{|K|} \triint |\kappa_{123}| \Big( \int_K \dJ (\dfrac{u_h^\theta-u_h^r}{r})(\xt) d\bx - \int_K(\dfrac{u_h^\theta-u_h^r}{r})(\bx)d\bx \Big)d\by_3d\by_2d\by_1 \\
&= \dfrac{1}{|K|}\triint |\kappa_{123}| \Big( \int_{\tilde{K}_y}  (\dfrac{u_h^\theta-u_h^r}{r})(\bz)d\bz - \int_K (\dfrac{u_h^\theta-u_h^r}{r})(\bx) d\bx \Big)d\by_3d\by_2d\by_1 \\ &\quad\quad\tn{ by change of variables from $\bx$ to $\bz=\xt$ } \\
&\leq  \dfrac{1}{|K|}\| \dfrac{u_h^\theta-u_h^r}{r}\|_{L^\infty(C_K)}h(h\delta) \quad\quad\tn{ by (\ref{eta3}) } \\
&\leq \delta \|\dfrac{u_h^\theta-u_h^r}{r}\|_{L^\infty(C_K)}.
\end{aligned}
\end{equation}
In the second to the last inequality above, we are also using the fact that 
\[
\mathop\mathrm{area}((K\backslash\tilde{K}_y) \cup (\tilde{K}_y\backslash K))\leq Ch(h\delta), 
\]
which is clear from Figure~\ref{fig:Kt}.  

Finally, 
\begin{equation} \label{aaa3}
\begin{aligned}
&\int_K \big(\dfrac{1}{|K|}\int_K \dfrac{(R_h^d\bu_h - \bu_h)_\theta-(R_h^d\bu_h - \bu_h)^r}{r} d\bx r\chi_K \big)^2 rdrdz \\
 &\leq h^2r_K^3\delta^2 \left\|\dfrac{u_h^\theta-u_h^r}{r}\right\|_{L^\infty(C_K)}^2 &&\tn{ by (\ref{rhdrhd}) } \\ 
 &\leq \delta^2r_K^2\left\|\dfrac{u_h^\theta-u_h^r}{r}\right\|_{L^2_r(C_K)}^2 &&\tn{ by (\ref{inv3})} \\  
 &\leq \delta^2 \| u_h^\theta-u_h^r \|_{L^2_r(C_K)}^2 &&\tn{ by (\ref{inv4})} \\  
 &\leq C\delta^2\| \bu_h \|_{L^2_r(C_K)}^2. 
\end{aligned}
\end{equation}
Hence, the proof of (\ref{eq:d2}) is completed by (\ref{aa3}), (\ref{aaa3}), and (\ref{pi_div}).

\begin{figure}
  \centering
  \begin{tikzpicture}
    \coordinate [label=right: $\ba_k$] (ak) at (0,1);
    \coordinate [label=left:$\ba_j$] (aj) at (-5,0);
    \coordinate [label=left:$\ba_i$] (ai) at (-2,-3);

    \coordinate [label=above:$\by_k$] (yk) at ($(ak)+(-0.4,0.2)$);
    \coordinate [label=left:$\by_j$]  (yj) at (-5.2,0.5);
    \coordinate [label=below:$\by_i$] (yi) at ($(ai)+(0.6,0.3)$);
    
    \coordinate (yjk) at ($(yk)!.5!(yj)$);
    \coordinate (aij) at ($(ai)!.5!(aj)$);

    \node[anchor=south] at (yjk) {\textcolor{black}{$\Kt_{\by}$}};
    \node[anchor=east] at (aij) {$K$};

    \draw (ak) circle (1cm);
    \draw (aj) circle (1cm);
    \draw (ai) circle (1cm);

    \fill (ak) circle (2pt);
    \fill (aj) circle (2pt);
    \fill (ai) circle (2pt);

    \fill (yk) circle (2pt);    
    \fill (yj) circle (2pt);    
    \fill (yi) circle (2pt);    
   
    \draw [fill=green, fill opacity=0.5] (aj)--(ak)--(ai)--cycle ;
    \draw [purple, fill=yellow!50, fill opacity=0.5] (yj)--(yk)--(yi)--cycle;

  \end{tikzpicture}
  \caption{$K$ is mapped to $\Kt_{\by}$ under the map $\bx \mapsto
    \xt$.}
  \label{fig:Kt}
\end{figure}
\end{proof}

\begin{remark} \label{good}
It is worth mentioning how we are using $\kappa_i$ in the proof of Lemma~\ref{lem:Rbounds}. The three $r$'s in the definition of $\kappa_i=r^3\eta_\bai$ are each being used for a different purpose.
One $r$ is being multiplied to $\eta_\bai$ so that we can use (\ref{eta2}), another $r$ is being multiplied to the integrand that appears next to $\kappa_i$,
and the last $r$ is being left in the integral so that the integral is still an inner product in the weighted $L^2$-space, so that the resulting norms in the next step after applying the Cauchy-Schwarz inequality
continue to be $L^2_r$-norms. (See (\ref{two}) for example.) 
\end{remark} 

The following Lemma is a straightforward result followed by (\ref{eq:g2}), (\ref{eq:c2}), (\ref{eq:d2}), (\ref{eq:o2}), and a standard Neumann series argument.
\begin{lemma} \label{lem:inverse}
There are operators 
$$
J^g_h: A_h \rightarrow A_h, \quad J^c_h: \bB_h \rightarrow \bB_h, \quad J^d_h: \bC_h \rightarrow \bC_h, \quad J^o_h: D_h \rightarrow D_h,
$$
and $\delta_1>0$ such that for all $0<\delta<\delta_1$, the operators 
$R^g_h|_{A_h}, R^c_h|_{\bB_h}, R^d_h|_{\bC_h}$, and $R^o_h|_{D_h}$ are
invertible, their inverses are
$J^g_h, J^c_h, J^d_h$, and $J^o_h$, resp., and their operator norms satisfy
\[
\| J^g_h \|_{L_r^2(\Omega)}\le 2,\quad
\| J^c_h \|_{L_r^2(\Omega)} \le 2,\quad
\| J^d_h \|_{L_r^2(\Omega)} \le 2,\quad
\| J^o_h \|_{L_r^2(\Omega)} \le 2.
\]
Furthermore, these inverse operators satisfy the commuting diagram property as well. 
\end{lemma}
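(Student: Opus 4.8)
The plan is to obtain Lemma~\ref{lem:inverse} by a standard perturbation-of-the-identity (Neumann series) argument applied separately to each of the four restricted operators $R^g_h|_{A_h}$, $R^c_h|_{\bB_h}$, $R^d_h|_{\bC_h}$, $R^o_h|_{D_h}$, using the approximation estimates (\ref{eq:g2}), (\ref{eq:c2}), (\ref{eq:d2}), (\ref{eq:o2}) of Lemma~\ref{lem:Rbounds}, and then transferring the commuting diagram property from the $R^\bullet_h$ to their inverses. First I would record that each restricted operator does map the relevant discrete space into itself: since $R^g_h = I^n_g S^g$ and $I^n_g$ takes values in $A_h$ by (\ref{pi_grad}), we have $R^g_h|_{A_h}\colon A_h \to A_h$, and likewise $R^c_h|_{\bB_h}\colon \bB_h\to\bB_h$, $R^d_h|_{\bC_h}\colon\bC_h\to\bC_h$, $R^o_h|_{D_h}\colon D_h\to D_h$.

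Next I would rephrase (\ref{eq:g2}) as the operator-norm bound
\[
\left\| \mathrm{Id}_{A_h} - R^g_h|_{A_h} \right\|_{L^2_r(\Omega)\to L^2_r(\Omega)} \le C_0\,\delta ,
\]
where $C_0$ is the square root of the constant in (\ref{eq:g2}), which by Lemma~\ref{lem:Rbounds} is independent of both $h$ and $\delta$; analogously one gets constants $C_1,C_2,C_3$ from (\ref{eq:c2}), (\ref{eq:d2}), (\ref{eq:o2}) for the other three operators. Setting $\delta_1 := \big(2\max\{C_0,C_1,C_2,C_3\}\big)^{-1}$, for every $0<\delta<\delta_1$ the operator $\mathrm{Id}_{A_h}-R^g_h|_{A_h}$ has $L^2_r(\Omega)$-operator norm at most $\tfrac12$, so $R^g_h|_{A_h}$ is invertible on $A_h$ with inverse the convergent Neumann series
\[
J^g_h := \sum_{m=0}^{\infty} \big(\mathrm{Id}_{A_h} - R^g_h|_{A_h}\big)^m ,
\]
which again maps $A_h$ into $A_h$ and satisfies $\| J^g_h \|_{L^2_r(\Omega)} \le (1-\tfrac12)^{-1} = 2$. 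Repeating verbatim for $\bB_h,\bC_h,D_h$ yields $J^c_h,J^d_h,J^o_h$ with the claimed bounds.

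For the commuting diagram property of the inverses I would argue as follows. By Proposition~\ref{prop_comm} the smoothers commute with $\grad^n_{rz},\bcurl^n_{rz},\dive^n_{rz}$, and the interpolants $I^n_g,I^n_c,I^n_d,I_o$ commute with these operators on the (smooth) ranges of the smoothers, so each $R^\bullet_h$ commutes with the relevant differential operator; written uniformly, $d^k R_h^k = R_h^{k+1} d^k$ on $V^k$, with $R^0_h=R^g_h$, etc. Fix $v_h\in V_h^k$. Since $(V_h,d)$ is a subcomplex, both $d^k J_h^k v_h$ and $J_h^{k+1} d^k v_h$ lie in $V_h^{k+1}$. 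Applying $R_h^{k+1}|_{V_h^{k+1}}$ to the first gives $d^k(R_h^k J_h^k v_h) = d^k v_h$ because $R_h^k J_h^k = \mathrm{Id}$ on $V_h^k$, while applying it to the second gives $d^k v_h$ by definition of $J_h^{k+1}$. Injectivity of $R_h^{k+1}|_{V_h^{k+1}}$ (it is invertible) then forces $d^k J_h^k v_h = J_h^{k+1} d^k v_h$, which is the commuting diagram property for the inverses.

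The argument is essentially routine once Lemma~\ref{lem:Rbounds} is available; the only point needing genuine care is that the constants in (\ref{eq:g2})--(\ref{eq:o2}) are independent of \emph{both} $h$ and $\delta$, so that a single threshold $\delta_1$ works simultaneously for all mesh sizes — this is exactly the content of Lemma~\ref{lem:Rbounds}. A minor secondary point, which I would address briefly, is to confirm that $I^n_g,I^n_c,I^n_d,I_o$ commute with the differential operators on the ranges of the smoothers (not merely on spaces with extra Sobolev regularity); this is fine because $S^\bullet u$ depends on $\bx$ only through the affine map $\xt$ and is therefore smooth in $\bx$, so its degrees of freedom are well defined and the commuting diagram (\ref{disc_exact}) applies to it.
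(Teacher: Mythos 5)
Your proposal is correct and takes essentially the same route as the paper, which disposes of Lemma~\ref{lem:inverse} as a ``standard Neumann series argument'' based on (\ref{eq:g2}), (\ref{eq:c2}), (\ref{eq:d2}), (\ref{eq:o2}): your perturbation-of-the-identity construction with a threshold $\delta_1$ uniform in $h$ is exactly that argument, and your derivation of the commuting property of the inverses from $d^kR_h^k=R_h^{k+1}d^k$ together with injectivity of $R_h^{k+1}|_{V_h^{k+1}}$ matches the paper's (sketched) reasoning.
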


Finally, we are ready to define the commuting smoothed projectors that are uniformly bounded in the $L^2_r$-norm.
For the rest of the paper, we assume that $\delta\in (0,\delta_1]$, where $\delta_1$ is given as in Lemma~\ref{lem:inverse}.
Recall that $\bL_r^2(\Omega)$ denotes $L^2_r(\Omega)\times L^2_r(\Omega)\times L^2_r(\Omega)$.
\begin{definition}
  \label{def:proj}
  Define $\Pi^g_h : L^2_r(\Omega) \rightarrow A_h$, $\Pi^c_h : \bL^2_r(\Omega)
  \rightarrow \bB_h$, $\Pi^d_h : \bL^2_r(\Omega) \rightarrow \bC_h$ and $\Pi^o_h : L^2_r(\Omega) \rightarrow D_h$ by
  \[
  \Pi^g_h = J^g_hR^g_h,
  \qquad
  \Pi^c_h = J^c_hR^c_h,
  \qquad
   \Pi^d_h = J^d_hR^d_h,
  \qquad
  \Pi^o_h = J^o_hR^o_h.
  \]
\end{definition}

The following theorem summarizes the properties of these projectors.

\begin{theorem}
\label{thm:main}
The above operators are projectors and have the following properties:
\begin{enumerate}
\item \label{item:cnty}
  {\em Continuity.} There exists a $C>0$ such that 
  \begin{align*}
    \| \Pi_h^g u\|_{L_r^2(\Omega)} & \le C \| u \|_{L_r^2(\Omega)},
    && \forall u \in L_r^2(\Omega),
    \\
    \| \Pi_h^c \bu\|_{L_r^2(\Omega)} & \le C \| \bu \|_{L_r^2(\Omega)},
    && \forall \bu \in \bL^2_r(\Omega),
    \\
      \| \Pi_h^d \bu\|_{L_r^2(\Omega)} & \le C \| \bu \|_{L_r^2(\Omega)},
    && \forall \bu \in \bL^2_r(\Omega),
    \\
    \| \Pi_h^o u\|_{L_r^2(\Omega)} & \le C \| u \|_{L_r^2(\Omega)},
    && \forall u \in L_r^2(\Omega).
  \end{align*}

\item {\em Commutativity.} The operators satisfy the following
  commuting diagram:
\begin{equation}
\begin{CD}
L^2_r(\Omega)   @>\mathbf{grad}^n_{rz}>> \bL^2_r(\Omega)   @>\bcurl\nolimits^n\nolimits_{rz}>> \bL^2_r(\Omega)   @>\dive^n_{rz}>>   L^{2}_{r}(\Omega) \\ 
@VV \Pi_h^g V                                   @VV \Pi_h^c V                           @VV \Pi_h^d V @ VV \Pi_h^o V  \\
A_h @> \grad_{rz}^n >> \bB_h @> \bcurl\nolimits^n\nolimits_{rz} >> \bC_h @> \dive_{rz}^n >> D_h
\end{CD}
\end{equation}

\item \label{item:apprx} 
  {\em Approximation.} 
\begin{align*}
  \left\| u - \Pi^g_hu \right\|_{L^2_r(\Omega)}  &\leq C\inf_{u_h\in A_h}\left\| u - u_h \right\|_{L^2_r(\Omega)} , \\
  \left\| \bu - \Pi^c_h\bu \right\|_{L^2_r(\Omega)}  &\leq C\inf_{\bu_h\in \bB_h}\left\| \bu - \bu_h \right\|_{L^2_r(\Omega)} , \\ 
   \left\| \bu - \Pi^d_h\bu \right\|_{L^2_r(\Omega)}  &\leq C\inf_{\bu_h\in \bC_h}\left\| \bu - \bu_h \right\|_{L^2_r(\Omega)} , \\
   \left\| u - \Pi^o_h u \right\|_{L^2_r(\Omega)}  &\leq C\inf_{u_h\in D_h}\left\| u - u_h \right\|_{L^2_r(\Omega)} . 
\end{align*}
\end{enumerate}
\end{theorem}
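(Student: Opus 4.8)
The plan is to deduce Theorem~\ref{thm:main} by assembling the three technical ingredients already in hand: the commuting diagram for the smoothers $S^g,S^c,S^d,S^o$ (Proposition~\ref{prop_comm}); the $L^2_r$-mapping bounds for $R_h^g,R_h^c,R_h^d,R_h^o$ (Lemma~\ref{lem:Rbounds}); and the invertibility of the restrictions of $R_h^g,R_h^c,R_h^d,R_h^o$ to $A_h,\bB_h,\bC_h,D_h$, together with the bounds $\|J_h^g\|_{L^2_r(\Omega)},\|J_h^c\|_{L^2_r(\Omega)},\|J_h^d\|_{L^2_r(\Omega)},\|J_h^o\|_{L^2_r(\Omega)}\le 2$ and the commutativity of the inverse operators $J_h^g,J_h^c,J_h^d,J_h^o$ (Lemma~\ref{lem:inverse}). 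Since $\delta\in(0,\delta_1]$ is now fixed, every constant of the form $C\delta^{-m}$ produced by Lemma~\ref{lem:Rbounds} is a fixed constant independent of $h$, which is all the statement demands.

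First I would establish that each $R_h^g,R_h^c,R_h^d,R_h^o$ commutes with the governing differential operator. Writing $R_h^g u = I_g^n S^g u$ and noting that $S^g u$ is smooth enough to lie in the domain of the canonical interpolant $I_g^n$, the commuting diagram~(\ref{disc_exact}) of \cite{O:2015} gives $\grad^n_{rz} R_h^g u = I_c^n\bigl(\grad^n_{rz} S^g u\bigr)$, while Proposition~\ref{prop_comm} rewrites the argument as $\grad^n_{rz} S^g u = S^c\bigl(\grad^n_{rz} u\bigr)$; hence $\grad^n_{rz} R_h^g u = R_h^c\bigl(\grad^n_{rz} u\bigr)$, and the analogous identities $\bcurl^n_{rz} R_h^c = R_h^d\,\bcurl^n_{rz}$ and $\dive^n_{rz} R_h^d = R_h^o\,\dive^n_{rz}$ follow in the same way. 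Since $\grad^n_{rz} A_h\subset\bB_h$ (and likewise for the other two discrete maps), the inverse operators of Lemma~\ref{lem:inverse} apply to these images, and using that they too commute we obtain $\grad^n_{rz}\Pi_h^g u = \grad^n_{rz} J_h^g R_h^g u = J_h^c\,\grad^n_{rz} R_h^g u = J_h^c R_h^c\,\grad^n_{rz} u = \Pi_h^c\,\grad^n_{rz} u$, and similarly for the remaining two squares; this is the commuting diagram of the theorem.

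Next, by construction $\Pi_h^g = J_h^g R_h^g$ takes values in $A_h$, and for $v_h\in A_h$ Lemma~\ref{lem:inverse} gives $R_h^g v_h\in A_h$ with $J_h^g R_h^g v_h = v_h$; thus $\Pi_h^g$ restricts to the identity on $A_h$, which, its range being contained in $A_h$, forces $(\Pi_h^g)^2 = \Pi_h^g$, and the same reasoning applies to $\Pi_h^c,\Pi_h^d,\Pi_h^o$. Continuity is then immediate from submultiplicativity of operator norms: $\|\Pi_h^g u\|_{L^2_r(\Omega)}\le\|J_h^g\|_{L^2_r(\Omega)}\,\|R_h^g u\|_{L^2_r(\Omega)}\le 2\,C^{1/2}\delta^{-5/2}\,\|u\|_{L^2_r(\Omega)}$ by Lemma~\ref{lem:inverse} and~(\ref{eq:g1}), and likewise for the three remaining operators using~(\ref{eq:c1}), (\ref{eq:d1}), and~(\ref{eq:o1}); with $\delta$ fixed these are the asserted continuity bounds. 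Finally, the approximation estimates follow from the projector property and the continuity bounds in the standard way: for any $u_h\in A_h$, $\|u-\Pi_h^g u\|_{L^2_r(\Omega)}\le\|u-u_h\|_{L^2_r(\Omega)}+\|\Pi_h^g(u_h-u)\|_{L^2_r(\Omega)}\le(1+C)\,\|u-u_h\|_{L^2_r(\Omega)}$, and taking the infimum over $u_h$ yields the estimate; the other three are obtained identically.

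The single point requiring genuine care --- and the one I expect to be the main obstacle --- is the hypothesis used silently above, that each $R_h^g,R_h^c,R_h^d,R_h^o$ is actually well-defined on all of $L^2_r(\Omega)$ (not merely on functions with extra regularity) and that~(\ref{disc_exact}) may legitimately be invoked for the smoothed functions $S^g u$, $S^c\bu$, etc. As in \cite{GO:2012, Schob01}, this holds because the degrees of freedom of $I_g^n,I_c^n,I_d^n,I_o$ evaluated at the smoothed functions collapse, via~(\ref{xta}) and~(\ref{eta4}), to integrals of the data against the bounded kernels $\kappa_i$ and $\kappa_{jk}$ --- precisely the quantities already estimated in the proof of Lemma~\ref{lem:Rbounds} --- so they are finite for $L^2_r(\Omega)$ data, and the extra smoothness of the $S$-smoothed functions needed for the interpolant diagram is built into the construction of the smoothers. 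Everything else is bookkeeping on top of Proposition~\ref{prop_comm} and Lemmas~\ref{lem:Rbounds} and~\ref{lem:inverse}.
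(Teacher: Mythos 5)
Your proposal is correct and follows essentially the same route as the paper's proof: idempotency from $R_h|_{V_h^k}\circ J_h=\mathrm{id}$, continuity from $\|J_h\|\le 2$ together with Lemma~\ref{lem:Rbounds}, commutativity from Proposition~\ref{prop_comm} combined with the commutativity of the interpolants in (\ref{disc_exact}) and of the inverses in Lemma~\ref{lem:inverse}, and the approximation bound via the standard quasi-optimality argument using the projector property. Your explicit discussion of why $R_h$ is well defined on all of $L^2_r(\Omega)$ is a point the paper leaves implicit, but it is consistent with the paper's construction and does not change the argument.
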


\begin{proof}
We will prove all items for $\Pi_h^g$ only as the remaining statements can be proved in a similar way.
First of all, $\Pi_h^g$ is indeed a projector, since
\[
(\Pi_h^g)^2v = (J^g_hR^g_h)^2v = J^g_hR^g_hJ^g_hR^g_hv = J^g_h(R^g_h|_{A_h}J^g_h)R^g_hv =  J^g_hR^g_hv = \Pi_h^gv.
\]
Furthermore, by (\ref{eq:g1}) and Lemma~\ref{lem:inverse}, we also have
  \[
  \| \Pi_h^g \|_{L_r^2(\Omega)}  \le C \| J_h^g \|_{L_r^2(\Omega)} \| R_h^g \|_{L_r^2(\Omega)} \le 2C/\delta^5,
    \]
 which proves the continuity estimate. 
 Commutativity follows by Proposition~\ref{prop_comm} and the fact that the inverse operators in Lemma~\ref{lem:inverse} also satisfy a commuting diagram property. 
 Lastly, since $ \Pi^g_h u_h = u_h$, we have
  \begin{align*}
    \left\| u - \Pi^g_hu \right\|_{L^2_r(\Omega)} = \| u - u_h - \Pi^g_h(u-u_h) \|_{L^2_r(\Omega)} \leq (1+C)\| u - u_h  \|_{L^2_r(\Omega)},
  \end{align*}
  for any $u_h\in A_h$, and therefore
  \[
   \left\| u - \Pi^g_hu \right\|_{L^2_r(\Omega)}  \leq C\inf_{u_h\in A_h}\left\| u - u_h \right\|_{L^2_r(\Omega)}.
  \]
\end{proof}

It is clear from Theorem~\ref{thm:main} that the projections constructed in the previous section are certainly uniformly $V$-bounded cochain projections by (\ref{bounded}),
and this verifies Theorem~\ref{main2}, the main result of this paper. 

\textcolor{black}{\begin{remark}
We note here that the constant $C$ appearing in Theorem~\ref{thm:main}~items (\ref{item:cnty}) and (\ref{item:apprx}) is independent of the Fourier mode $n$.
Item~(\ref{item:apprx}), however, provides abstract error estimates, so we investigate how the concrete error estimates are affected by $n$ through a numerical example in the next section.  
\end{remark}}

\section{Numerical Results} \label{numerical}

In this section, we present numerical results for the weighted mixed formulation of the Hodge Laplacian problem that we investigated in this paper.
We present numerical results of the weighted mixed formulation of the Hodge Laplacian for $k=0, 1, 2$, and $3$. For all examples presented below,
we choose the domain to be the unit square in $\RRR^2_+$ with vertices $(0,0), (1,0), (1,1),$ and $(0,1)$. 

\vspace{\baselineskip}

\underline{Weighted Hodge Laplacian Problem with $k=3$}

\vspace{\baselineskip}

The problem of interest in this case can be stated as follows:

find $(\boldsymbol{\sigma}_h,u_h)\in  \bC_h \times D_h$ such that
\begin{equation} \label{example1}
\begin{aligned}
(\boldsymbol{\sigma}_h,\boldsymbol{\tau}_h)_{L^2_r(\Omega)} - (\dive\nolimits_{rz}^1\boldsymbol{\tau}_h, u_h)_{L^2_r(\Omega)}  &= 0, &&\textnormal{ for all } \boldsymbol{\tau}_h\in \bC_h, \\
(\dive\nolimits_{rz}^1\boldsymbol{\sigma}_h,v_h)_{L^2_r(\Omega)}  &= (f,v_h)_{L^2_r(\Omega)}  &&\textnormal{ for all } v_h\in D_h.
\end{aligned}
\end{equation}

For computer implementation of the mixed method, we need to assemble
the matrix representations of the operators $K_h: \bC_h \mapsto \bC_h'$
and $L_h: \bC_h \mapsto D_h'$ defined by
\begin{align*}
K_h \bv_h (\bw_h) &= (\bv_h, \bw_h)_{L^2_r(\Omega)}  \textnormal{ for all } \bv_h, \bw_h \in \bC_h \\ 
L_h \bv_h (s_h) &= -(\dive\nolimits_{rz}^1 \bv_h, s_h)_{L^2_r(\Omega)}  \textnormal{ for all } \bv_h \in \bC_h, s_h \in D_h.
\end{align*}
Let $\mathtt{K}$ and $\mathtt{L}$ denote the matrix
representations of $K_h$ and $L_h$ respectively, in terms of the
standard local bases for  $\bC_h$ and $D_h$.
Then~(\ref{example1}) can be rewritten as the linear system
\begin{align*}
\mathtt{K}\underline{\sigma_h} + \mathtt{L^T}\underline{u_h}
&= 0, \\
-\mathtt{L}\;\underline{\sigma_h} 
& = \underline{f_h},
\end{align*}
where $\underline{\sigma_h}$ and $\underline{u_h}$ denote the vectors of
coefficients in the basis expansions of $\boldsymbol{\sigma}_h$ and $u_h$,
respectively.  The vector $\underline{f_h}$ is computed from the right
hand side of (\ref{example1}) as usual. In practice, we compute
$\underline{u_h}$ and $ \underline{\sigma_h}$ by solving
\begin{align*}
\mathtt{M} \underline{u_h}&= \underline{f_h}, \\
\mathtt{K} \underline{\sigma_h} &= \underline{g_h},
\end{align*}     
where $\mathtt{M} = \mathtt{L}\mathtt{K^{-1}}\mathtt{L^T}$ and
$\underline{g_h} = -\mathtt{L^T}\underline{u_h}$. Both these systems can be
solved via the conjugate gradient method as $\mathtt{M}$ and
$\mathtt{K}$ are symmetric and positive definite. Note that when
solving the first equation, for each application of $\mathtt{M}$, we
use another inner conjugate gradient iteration to obtain the result of
multiplication by $\mathtt{K^{-1}}$.

In Table~\ref{ex1}, we report the $L^2_r(\Omega)$-norm of the observed errors.
$u_l$ and $\boldsymbol{\sigma}_l$ are used to denote the approximation of $u$ and $\boldsymbol{\sigma}$ respectively at mesh level $l$.
We choose the right hand side data function $f$ so that
the exact solution $(\boldsymbol{\sigma}, u)$ is 
\begin{align*}
u &= \sin(\pi z)(r^2-r), \\  
\boldsymbol{\sigma}  &= (-\sin(\pi z)(2r-1),  -\sin(\pi z)(r-1), -\pi\cos(\pi z)(r^2-r))^T.
\end{align*}

\begin{table} 
\caption{$k=3$ case with Fourier mode $n=1$}
\label{ex1} 
\begin{center}
 \begin{tabular}{|c| c | c| c| c |} 
 \hline
 mesh level $l$ & $\left\| u-u_l \right\|_{L^2_r(\Omega)}$ & rate & $\left\| \boldsymbol{\sigma}-\boldsymbol{\sigma}_l \right\|_{L^2_r(\Omega)}$  & rate  \\ [0.5ex] 
 \hline
1 & 4.265e-02    &          &   1.922e-01   &         \\
2 & 2.318e-02    &  0.88 &   1.018e-01  &  0.92   \\
3 & 1.188e-02    &  0.96 &    5.198e-02  & 0.97   \\
4 & 5.979e-03    &  0.99 &   2.616e-02   & 0.99  \\
5 & 2.995e-03    &  1.00 &    1.311e-02   &  1.00 \\
6 & 1.498e-03    &  1.00 &      6.556e-03 &  1.00   \\ 
7 & 7.492e-04    &  1.00 &    3.278e-03 & 1.00 \\  
 \hline
\end{tabular} 
\end{center}
\end{table}

\vspace{3\baselineskip}

\color{black}
\underline{Weighted Hodge Laplacian Problem with $k=2$}

\vspace{\baselineskip}

We next consider the case when $k=2$ with Fourier mode $n=3$. We are interested in finding $(\boldsymbol{\sigma}_h, \bu_h)\in \bB_h\times \bC_h$ that satisfies
\begin{align*}
(\boldsymbol{\sigma}_h, \boldsymbol{\tau}_h)_r - (\bcurlrz^3\boldsymbol{\tau}_h, \bu_h)_r &= 0 &&\textnormal{ for all } \boldsymbol{\tau}_h\in \bB_h, \\
(\bcurlrz^3\boldsymbol{\sigma}_h, \bv_h)_r + (\dive\nolimits_{rz}\nolimits^3 \bu_h, \dive\nolimits_{rz}\nolimits^3 \bv_h)_r &= (\bff, \bv_h)_r &&\textnormal{ for all } \bv_h\in \bC_h.
\end{align*}
The implementation is done in the usual way similar as the $k=3$ case, and
we choose $\bff$ so that the exact solution $(\boldsymbol{\sigma}, \bu)$ is 
\begin{align*}
\bu &= (3r(r-1), -3r^2+2r, 0)^T, \\
\boldsymbol{\sigma} &= (0, 0, r^2(r-1))^T.
\end{align*}
In Table~\ref{ex4}, we report the $L^2_r(\Omega)$-errors for this problem.

\begin{table} 
\caption{$k=2$ case with Fourier mode $n=3$}
\label{ex4} 
\begin{center}
 \begin{tabular}{|c| c | c| c| c |} 
 \hline
 mesh level $l$ & $\left\| \bu-\bu_l \right\|_{L^2_r(\Omega)}$ & rate & $\left\| \boldsymbol{\sigma}-\boldsymbol{\sigma}_l \right\|_{L^2_r(\Omega)}$  & rate  \\ [0.5ex] 
 \hline
1  & 3.416e-02  &          & 2.295e-01  & \\
2  & 2.865e-02  & 0.25  & 1.367e-01  & 0.75 \\
3  & 2.033e-02  & 0.50  & 7.075e-02  & 0.95 \\
4  & 1.105e-02  & 0.88  & 3.583e-02  & 0.98 \\
5  & 5.652e-03  & 0.97  & 1.800e-02  & 0.99 \\
6  & 2.845e-03  & 0.99  & 9.015e-03  & 1.00 \\
7  & 1.425e-03  & 1.00  & 4.510e-03  & 1.00 \\
 \hline
\end{tabular} 
\end{center}
\end{table}

\vspace{\baselineskip}

\underline{Weighted Hodge Laplacian Problem with $k=1$} 

\vspace{\baselineskip}

The discrete problem for $k=1$ with Fourier mode $n=2$ can be stated as follows:

Find $(\sigma_h, \bu_h)\in A_h\times \bB_h$ that satisfies
\begin{align*}
(\sigma_h, \tau_h)_r - (\gradrz^2\tau_h, \bu_h)_r &= 0 &&\textnormal{ for all } \tau_h\in A_h, \\
(\gradrz^2\sigma_h, \bv_h)_r + (\curlrz^2 \bu_h, \curlrz^2 \bv_h)_r &= (\bff, \bv_h)_r &&\textnormal{ for all } \bv_h\in \bB_h.
\end{align*}
We choose $\bff$ so that the exact solution $(\bu,\sigma)$ is
\begin{align*}
\bu &= (r^3(r-1), 0, 0)^T, \\
\sigma &= -5r^3+4r^2.
\end{align*}
The error for this case is reported in Table~\ref{ex3}.
\begin{table} 
\caption{$k=1$ case with Fourier mode $n=2$}
\label{ex3} 
\begin{center}
 \begin{tabular}{|c| c | c| c| c |} 
 \hline
 mesh level $l$ & $\left\| \bu-\bu_l \right\|_{L^2_r(\Omega)}$ & rate & $\left\| \sigma-\sigma_l \right\|_{L^2_r(\Omega)}$  & rate  \\ [0.5ex] 
 \hline
1  & 3.368e-02  &          & 1.147e-01  &        \\
2  & 2.484e-02  & 0.44  & 6.520e-02  & 0.81 \\
3  & 1.749e-02  & 0.51  & 1.964e-02  & 1.73  \\
4  & 9.658e-03  & 0.86  & 5.206e-03  & 1.92  \\
5  & 4.963e-03  & 0.96  & 1.326e-03  & 1.97  \\
6  & 2.501e-03  & 0.99  & 3.335e-04  & 1.99  \\
7  & 1.254e-03  & 1.00  & 8.351e-05  & 2.00  \\
 \hline
\end{tabular} 
\end{center}
\end{table}

\vspace{\baselineskip}

\color{black}
\underline{Weighted Hodge Laplacian Problem with $k=0$}

\vspace{\baselineskip}

We fix the Fourier mode to be $n=1$ in this example. 
Let $Q_h: H_r(\grad^1,\Omega) \rightarrow A_h$ be a projection that satisfies
\begin{equation*}
(\gradrz^1 Q_h u, \gradrz^1 v_h)_{L^2_r(\Omega)}  = (\gradrz^1 u, \gradrz^1 v_h)_{L^2_r(\Omega)}  \textnormal{ for all } v_h\in A_h.
\end{equation*}
Note that, the right-hand-side of the above problem is in a different form compared to that appearing on the right-hand-side of the
weighted mixed formulation of the Hodge Laplacian for $k=0$. Also recall that the nullspace of the $\gradrz^1$ operator is trivial. 
In Table~\ref{ex2}, we report the $L^2_r(\Omega)$-norm of the observed error for different $u$-values that are indicated in the chart. 
As expected, one can see the increase of the order of convergence as the $r$-degree grows higher. 

\footnotesize
\begin{table}
\caption{$k=0$ case with Fourier mode $n=1$}
\label{ex2}
\begin{center}
 \begin{tabular}{| c | c | c || c| c || c | c || c | c |} 
 \hline
 \multicolumn{1}{|c|}{} &\multicolumn{2}{c||}{$u=r^{1/2}$} &\multicolumn{2}{c||}{$u=r^{2/3}$} &\multicolumn{2}{c||}{$u=r^{5/6}$} &\multicolumn{2}{c|}{$u=r\sin z$} \\\cline{2-9}
   \hline
   mesh level    & $\left\| u-Q_h u\right\|_{L^2_r(\Omega)}$ & rate & $\left\| u-Q_h u\right\|_{L^2_r(\Omega)}$  & rate & $\left\| u-Q_h u\right\|_{L^2_r(\Omega)}$  & rate & $\left\| u-Q_h u\right\|_{L^2_r(\Omega)}$ & rate\\ 
 \hline
1 & 8.578e-03    &             &   4.086e-03 &           & 1.474e-03 &.         & 1.116e-03 &  \\
2 & 3.306e-03    &  1.38   &    1.437e-03   & 1.51 & 4.789e-04 & 1.62  & 2.904e-04 & 1.94 \\
3 & 1.225e-03    &   1.43 &     4.802e-04  & 1.58 & 1.465e-04 & 1.71 &  7.349e-05 & 1.98  \\
4 & 4.464e-04    &  1.46 &      1.569e-04   &  1.61 & 4.351e-05 & 1.75 & 1.843e-05 &  2.00 \\
5 & 1.615e-04    &  1.47 &       5.074e-05  &  1.63 & 1.272e-05 & 1.77 & 4.609e-06 & 2.00 \\
6 & 5.825e-05  &   1.47  &      1.631e-05 & 1.64 & 3.685e-06 & 1.79 & 1.152e-06  & 2.00 \\
7 & 2.096e-05   &   1.47   &   5.225e-06  & 1.64 & 1.061e-06 & 1.80 & 2.881e-07  & 2.00 \\
 \hline 
\end{tabular}
\end{center}
\end{table}

\small

\vspace{\baselineskip}

\color{black}
Before we end this section, we include one more numerical example that shows the relation between the order of convergence and the Fourier mode $n$. 
We consider the case when $k=1$ and investigate the order of convergence of the $\sigma$ approximation for Fourier modes $n$ from $2$ to $60$ in increments of $2$. 
We fix $\bff = (0,r,0)^T$ throughout this experiment.
The exact solution for various $n$-values are unknown for this case, so we compute the $L^2_r(\Omega)$-error between two consecutive approximations. 
Figure~\ref{graph2} shows the order of convergence of $\|\sigma_l-\sigma_{l-1}\|_{L^2_r(\Omega)}$ at $l=7$ in relation to various Fourier modes. 
When $\bff$ is independent of $n$, 
the exact solution will depend on $n$, and it is noticeable from Figure~\ref{graph2} that the order of convergence slowly decreases as $n$ gets larger. 

\begin{figure} 
\includegraphics[height=1.9in]{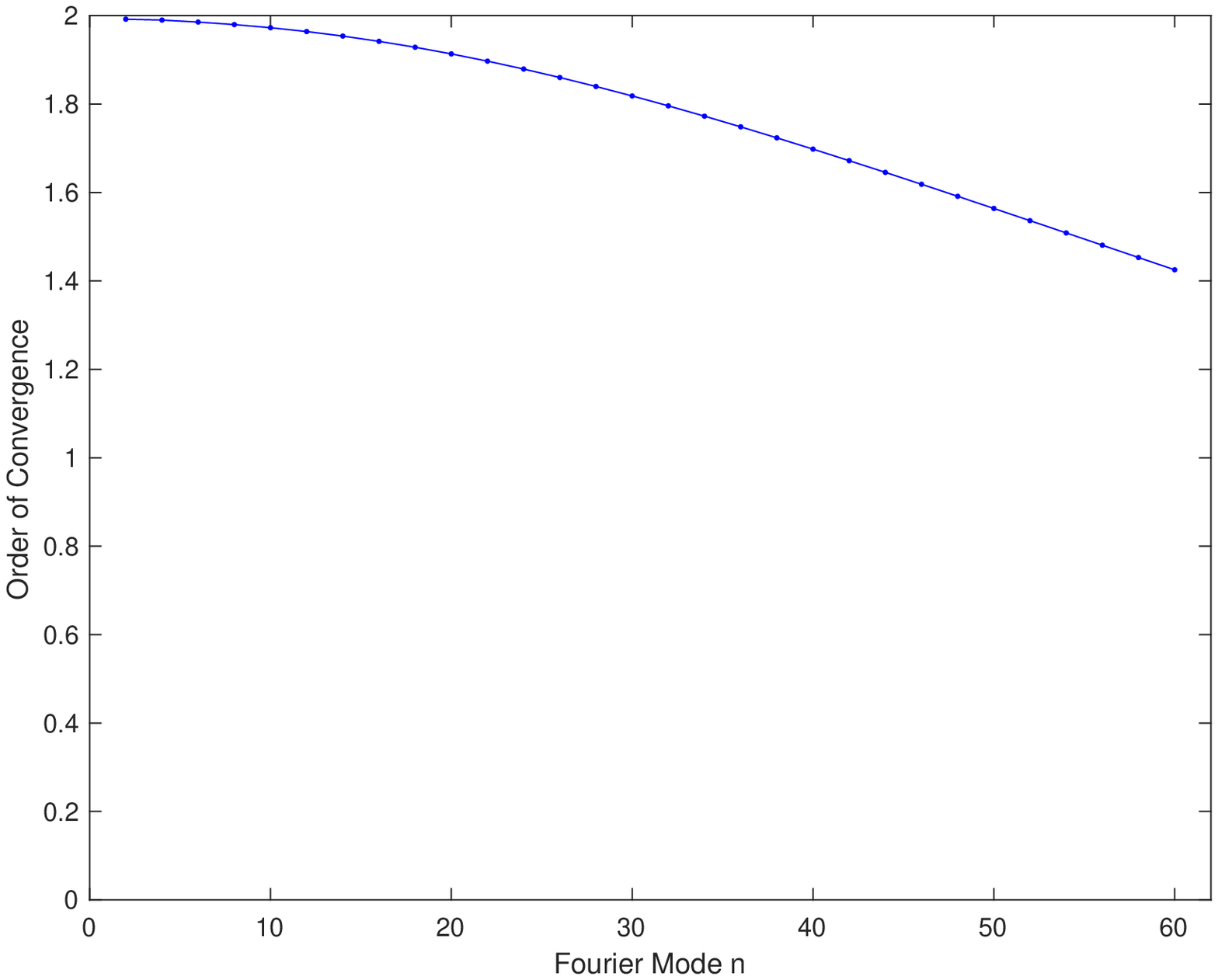}
\caption{The order of convergence of $\|\sigma_7-\sigma_{6}\|_{L^2_r(\Omega)}$ for various Fourier modes when $k=1$}
\label{graph2}
\end{figure}

\section{Concluding Remarks}

We studied the mixed formulation of the Hodge Laplacian on $3D$ axisymmetric domains with general data 
through Fourier-FEMs by using a recently developed family of Fourier finite element spaces. While we focused on 
axisymmetric $3D$ domains in this paper, the extension of this work to general $m$-dimensions ($mD$) will be 
interesting. Understanding what it means for a differential form to be axisymmetric in general $mD$
and extending the results in this paper to general $mD$ manifolds remain as future work.

\color{black}

\section{Appendix} \label{appendix}

\subsection{Understanding Axisymmetric Problems with General Data through Differential Forms} \label{differential_forms}

One way to reach the formulas of (\ref{formulas}) is by considering differential forms that represent a Fourier series decomposition and then applying the exterior derivative to it. 
Then, we can calculate the co-derivative in this setting to reach the formulas of the dual operators (\ref{dualoperator}).
In \cite{O:2014}, a new approach in analyzing axisymmetric problems with axisymmetric data (data that is independent of $\theta$) by using differential forms in a similar way was given. In this section, we will extend that approach to the 
general framework of axisymmetric problems with general data. Recall that $\breve{\Omega}\subset\RRR^3$ is an axisymmetric domain that can be obtained by rotating $\Omega\subset\RRR^2_+$ around the axis of symmetry.
We will use the standard notation in differential geometry that can be found in \cite{AFW:2010} for example. Since we will be using cylindrical coordinates, we will use 
$dr, rd\theta$, and $dz$ instead of $dx_1, dx_2$, and $dx_3$ when writing a differential $s$-form.

In \cite{O:2014}, the closed subspace of  $L^2\Lambda^s(\breve{\Omega})$ denoted by $\breve{L}^2\Lambda^s(\breve{\Omega})$
consisting of axisymmetric differential $s$-forms were considered. We write out the definition of $\breve{L}^2\Lambda^s(\breve{\Omega})$ here:
\begin{align*}
\breve{L}^2\Lambda^0(\breve{\Omega}) &= \{f\in L^2\Lambda^0(\breve{\Omega}): \dfrac{\partial f}{\partial\theta}=0 \}, \\
\breve{L}^2\Lambda^1(\breve{\Omega}) &= \{fdr+grd\theta+hdz\in L^2\Lambda^1(\breve{\Omega}): \dfrac{\partial f}{\partial\theta}=\dfrac{\partial g}{\partial\theta}=\dfrac{\partial h}{\partial\theta}=0 \}, \\
\breve{L}^2\Lambda^2(\breve{\Omega}) &= \{f rd\theta\wedge dz+gdz\wedge dr + hdr\wedge rd\theta\in L^2\Lambda^2(\breve{\Omega}):  \dfrac{\partial f}{\partial\theta}=\dfrac{\partial g}{\partial\theta}=\dfrac{\partial h}{\partial\theta}=0\}, \\
\breve{L}^2\Lambda^3(\breve{\Omega}) &= \{f dr\wedge rd\theta\wedge dz\in L^2\Lambda^3(\breve{\Omega}): \dfrac{\partial f}{\partial\theta}=0 \}.
\end{align*} 
These are the differential forms of interest when studying axisymmetric problems with axisymmetric data, and since if $\dfrac{\partial f}{\partial\theta}=0$ then
\[
\int\int\int_{\breve{\Omega}} f dV = 2\pi\int\int_\Omega f rdrdz,
\]
we utilized the weighted differential $t$-form space denoted by $L^2_r\Lambda^t(\Omega)$ for $t=0,1,2$ that consist of differential $t$-forms whose each component is 
square integrable with the weight $r$ (with the measure $rdrdz$).
This is again a Hilbert space with the inner product 
\begin{align*}
<\omega, \eta>_{L^2_r\Lambda^t} = \int_{\Omega} <\omega_x, \eta_x>_{\mathop\mathrm{Alt}^t T_x\Omega} rdrdz,
\end{align*} 
for all $\omega, \eta \in L^2_r\Lambda^t(\Omega)$. 

In this paper, we are considering axisymmetric problems with data that does have $\theta$-dependency, and therefore, we need to consider a subspace of $L^2\Lambda^s(\breve{\Omega})$
that is different from $\breve{L}^2\Lambda^s(\breve{\Omega})$. In particular, we need the following subspace of $L^2\Lambda^s(\breve{\Omega})$ that represent a Fourier series decomposition:
\begin{equation} \label{L2n_def}
 \begin{aligned}
 \tilde{L}^{2,n}\Lambda^0 &= \left\{ f\cos (n\theta)\in L^2\Lambda^0(\breve{\Omega}): \dfrac{\partial f}{\partial\theta}=0 \right\}, \\ 
 \tilde{L}^{2,n}\Lambda^1 &=  \left\{ f\cos(n\theta) dr + g\sin(n\theta) rd\theta + h\cos(n\theta) dz \in L^2\Lambda^1(\breve{\Omega}): \dfrac{\partial f}{\partial\theta}=\dfrac{\partial g}{\partial\theta}=\dfrac{\partial h}{\partial\theta}=0  \right\}, \\
\tilde{L}^{2,n}\Lambda^2 &= \left\{ f\sin(n\theta) rd\theta\wedge dz + g\cos(n\theta) dz\wedge dr + h\sin(n\theta) dr\wedge rd\theta\in L^2\Lambda^2(\breve{\Omega}):  \right. \\
 &\qquad\left. \dfrac{\partial f}{\partial\theta}=\dfrac{\partial g}{\partial\theta}=\dfrac{\partial h}{\partial\theta}=0 \right\}, \\
  \tilde{L}^{2,n}\Lambda^3 &= \left\{ f\sin(n\theta)dr\wedge rd\theta\wedge dz \in L^2\Lambda^3(\breve{\Omega}): \dfrac{\partial f}{\partial\theta}=0 \right\}.
 \end{aligned}
\end{equation}
The inner-product associated with these spaces is simply
\[
<w,v>_{\tilde{L}^{2,n}\Lambda^s} = <w,v>_{L^2\Lambda^s}.
\]
Note that if $\dfrac{\partial f}{\partial\theta}=0$ then
\begin{equation} \label{cos_sin}
\begin{aligned}
\int\int\int_{\breve{\Omega}} f^2\cos^2(n\theta) rdrd\theta dz &= \pi\int\int_\Omega f^2 rdrdz, \\
\int\int\int_{\breve{\Omega}} f^2\sin^2(n\theta) rdrd\theta dz &= \pi\int\int_\Omega f^2 rdrdz,
\end{aligned}
\end{equation}
so the functions $f, g$, and $h$ appearing in (\ref{L2n_def}) satisfy $f, g, h\in L^2_r\Lambda^0 (\Omega)$. 
Therefore, 
$<w,v>_{\tilde{L}^{2,n}\Lambda^s}$ can also be expressed by using the inner-product in $L^2_r\Lambda^0 (\Omega)$ as well. 

Next, consider the exterior derivative $d$ defined as
\begin{equation} \label{ex_der1}
d(\alpha_\sigma dx_{\sigma (1)}\wedge\cdot\cdot\cdot\wedge dx_{\sigma (s)}) = \sum_{j=1}^3 \frac{\partial\alpha_\sigma}{\partial x_j}dx_j\wedge dx_{\sigma (1)}\wedge\cdot\cdot\cdot\wedge dx_{\sigma (s)},
\end{equation} 
and the relations
\begin{equation} \label{formula1}
\begin{aligned} 
dx_1 &= \cos\theta dr - r\sin\theta d\theta, \\
dx_2 &= \sin\theta dr + r\cos\theta d\theta, \\
dx_3 &= dz,
\end{aligned}
\end{equation}
and
\begin{equation} \label{formula2}
\begin{aligned} 
dr &= \cos\theta dx_1 + \sin\theta dx_2, \\
rd\theta &= -\sin\theta dx_1 + \cos\theta dx_2, \\
dz &= dx_3.
\end{aligned}
\end{equation}
One can calculate
\begin{equation} \label{dn}
 \begin{aligned}
 &d(f\cos(n\theta)) \\
 &\quad= (\partial_rf)\cos(n\theta)dr - (\dfrac{n}{r})\sin(n\theta)rd\theta + (\partial_zf)\cos(n\theta)dz, \\
 &d(f\cos(n\theta)dr + g\sin(n\theta)rd\theta + h\cos(n\theta)dz) \\ 
 &\quad= (-\partial_zg -\dfrac{n}{r}h)\sin(n\theta)rd\theta\wedge dz 
 + (\partial_zf-\partial_rh)\cos(n\theta)dz\wedge dr 
 +(\partial_rg + \dfrac{n}{r}\cdot f + \dfrac{g}{r})\sin(n\theta) dr\wedge rd\theta, \\
 &d(f\sin(n\theta)rd\theta\wedge dz + g\cos(n\theta)dz\wedge dr + h\sin(n\theta)dr\wedge rd\theta) \\
 &\quad= (\partial_r f + \dfrac{f}{r} - \dfrac{n}{r}\cdot g + \partial_z h)\sin(n\theta) dr\wedge rd\theta\wedge dz.
 \end{aligned}
\end{equation}
Then we define the following spaces:
\begin{equation*}
\tilde{H}_{n}\Lambda^s = \left\{\omega\in  \tilde{L}^{2,n}\Lambda^s: d\omega\in  \tilde{L}^{2,n}\Lambda^{s+1} \right\}.  
\end{equation*}
This is a Hilbert space with the inner product being 
\begin{equation*}
<\omega,\nu>_{\tilde{H}_{n}\Lambda^s} = <\omega,\nu>_{ \tilde{L}^{2,n}\Lambda^s} + <d\omega,d\nu>_{ \tilde{L}^{2,n}\Lambda^{s+1}},
\end{equation*}
and we have a de Rham complex 
\begin{equation} \label{Lr2n_complex_cts}
0 \rightarrow \tilde{H}_{n}\Lambda^0 \xrightarrow{d} \tilde{H}_{n}\Lambda^1 \xrightarrow{d} \tilde{H}_{n}\Lambda^2 \xrightarrow{d} \tilde{H}_{n}\Lambda^3 \rightarrow 0.
\end{equation}

Let $inc: \partial\breve{\Omega}\rightarrow\breve{\Omega}$ be the inclusion map. Then, the pullback of the map $inc$ is the trace map 
denoted by $\mathrm{tr}: \Lambda^s(\breve{\Omega})\rightarrow\Lambda^s(\partial\breve{\Omega})$, and it is continuous from $\tilde{H}_{n}\Lambda^s(\breve{\Omega})$ to $H^{-1/2}\Lambda^s(\partial\breve{\Omega})$.

On $\breve{\Omega}$, the Hodge star operator $\star$ from $\Lambda^s(\breve{\Omega})$ to $\Lambda^{3-s}(\breve{\Omega})$ satisfies
\[
\int_{\breve{\Omega}} \omega\wedge\mu = <\star\omega, \mu>_{\tilde{L}^{2,n}\Lambda^{3-s}}
\]
for all $\mu\in \tilde{L}^{2,n}\Lambda^{3-s}$. 

The coderivative operator $\delta$ maps a $s$-form to a $(s-1)$-form in the following way:
\begin{equation} \label{coderivative}
\star (\delta\omega) = (-1)^sd (\star\omega).
\end{equation}

Now, by using the definition (\ref{coderivative}), let us calculate the co-derivative $\delta$ of $d$. 
\begin{equation} \label{delta_n}
 \begin{aligned}
  &\delta( f\cos(n\theta) dr + g\sin(n\theta) rd\theta + h\cos(n\theta) dz) \\
   &\quad= -(\partial_rf+\dfrac{f}{r}+\dfrac{n}{r}\cdot g + \partial_zh)\cos(n\theta), \\
   &\delta(f\sin(n\theta)rd\theta\wedge dz + g\cos(n\theta)dz\wedge dr + h\sin(n\theta)dr\wedge rd\theta) \\
   &\quad= (-\partial_zg +\dfrac{n}{r}h)\cos(n\theta)dr + (\partial_zf-\partial_rh)\sin(n\theta)rd\theta +(\partial_rg - \dfrac{n}{r}\cdot f + \dfrac{g}{r})\cos(n\theta) dz, \\ 
   &\delta(fdr\wedge rd\theta\wedge dz) \\
   &\quad= (-\partial_rf)\sin(n\theta)rd\theta\wedge dz - (\dfrac{nf}{r})\cos(n\theta)dz\wedge dr - (\partial_zf)\sin(n\theta)dr\wedge rd\theta.
 \end{aligned}
\end{equation}
As we did for the exterior derivative, we define 
\begin{equation*}
\tilde{H}_{n}^*\Lambda^s = \left\{\omega\in \tilde{L}^{2,n}\Lambda^s: \delta\omega\in \tilde{L}^{2,n}\Lambda^{s-1} \right\}.  
\end{equation*}
Furthermore, we define the subspace with vanishing trace:
\begin{equation*}
\tilde{H}_{n,0}^*\Lambda^s= \left\{ \omega\in \tilde{H}_{n}^*\Lambda^s: \mathrm{tr}\tilde{\omega}=0  \right\}.
\end{equation*}
Its inner product is
\begin{equation*}
<\omega,\nu>_{\tilde{H}_{n}^*\Lambda^s} = <\omega,\nu>_{ \tilde{L}^{2,n}\Lambda^s} + <\delta\omega,\delta\nu>_{ \tilde{L}^{2,n}\Lambda^{s-1}},
\end{equation*}
and we have a dual complex 
\begin{equation} \label{Lr2n_complex_cts}
0 \leftarrow \tilde{H}^*_{n,0}\Lambda^0 \xleftarrow{\delta} \tilde{H}^*_{n,0}\Lambda^1\xleftarrow{\delta} \tilde{H}^*_{n,0}\Lambda^2 \xleftarrow{\delta} \tilde{H}^*_{n,0}\Lambda^3 \leftarrow 0.
\end{equation}

By the Leibniz rule, Stokes theorem, and the fact that pullbacks respect the wedge product, we obtain the integration by parts formula for differential forms (\cite[section 4]{AFW:2010}):
\[
<d\omega, \mu>_{\tilde{L}^{2,n}\Lambda^s} = <\omega,\delta\mu>_{\tilde{L}^{2,n}\Lambda^{s-1}} + \int_{\partial\breve{\Omega}} \mathrm{tr}\omega \wedge \star\mu \tn{ for all } \omega\in \Lambda^{s-1}(\breve{\Omega}), \mu\in \Lambda^{s}(\breve{\Omega}),
\]
and this can be extended to 
\[
<d\omega, \mu>_{\tilde{L}^{2,n}\Lambda^s} = <\omega,\delta\mu>_{\tilde{L}^{2,n}\Lambda^{s-1}} \tn{ for all } \omega\in \tilde{H}_{n}\Lambda^{s-1}, \mu\in \tilde{H}_{n,0}^*\Lambda^s.
\]
Furthermore, the following theorem follows. (See \cite[Theorem 4.1]{AFW:2010}.)

\begin{theorem} \label{adjoint}
Let $d$ be the exterior derivative viewed as an unbounded operator from $\tilde{L}^{2,n}\Lambda^{s-1}$ to $\tilde{L}^{2,n}\Lambda^s$ with domain $\tilde{H}_{n}\Lambda^{s-1}$.
Then, the adjoint of $d$, as an unbounded operator from $\tilde{L}^{2,n}\Lambda^s$  to $\tilde{L}^{2,n}\Lambda^{s-1}$, has $\tilde{H}_{n,0}^*\Lambda^s$ as its domains 
and coincides with $\delta$.
\end{theorem}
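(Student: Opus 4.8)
The plan is to reproduce the argument of \cite[Theorem 4.1]{AFW:2010} in the present weighted, single-Fourier-mode setting, the two main tools being the two integration-by-parts identities for differential forms displayed just above the statement. Two inclusions must be established: that $\tilde{H}_{n,0}^*\Lambda^s$ is contained in the domain of the Hilbert-space adjoint $d^*$ and that $d^*=\delta$ there, and conversely that any $\mu$ in the domain of $d^*$ lies in $\tilde{H}_{n,0}^*\Lambda^s$.

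For the first (easy) inclusion I would start with $\mu\in\tilde{H}_{n,0}^*\Lambda^s$. The extended integration-by-parts formula then reads $\langle d\omega,\mu\rangle_{\tilde{L}^{2,n}\Lambda^s}=\langle\omega,\delta\mu\rangle_{\tilde{L}^{2,n}\Lambda^{s-1}}$ for every $\omega\in\tilde{H}_n\Lambda^{s-1}$; since $\delta\mu\in\tilde{L}^{2,n}\Lambda^{s-1}$ by definition, this is exactly the statement that $\mu\in\mathrm{dom}(d^*)$ and $d^*\mu=\delta\mu$.

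For the converse I would take $\mu\in\mathrm{dom}(d^*)$, so there is $\eta\in\tilde{L}^{2,n}\Lambda^{s-1}$ with $\langle d\omega,\mu\rangle=\langle\omega,\eta\rangle$ for all $\omega\in\tilde{H}_n\Lambda^{s-1}$, and proceed in two steps. First, testing only against forms $\omega$ that are smooth, compactly supported in the interior of $\breve{\Omega}$, and of the prescribed $\cos n\theta$/$\sin n\theta$ type (so that $d\omega$ stays of that type by (\ref{dn})), the identity says precisely that $\delta\mu=\eta$ holds in the distributional sense; hence $\delta\mu=\eta\in\tilde{L}^{2,n}\Lambda^{s-1}$ and $\mu\in\tilde{H}_n^*\Lambda^s$. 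Second, for $\omega$ ranging over a dense set of smooth forms in $\tilde{H}_n\Lambda^{s-1}$, the unextended integration-by-parts formula together with $\delta\mu=\eta$ yields $\int_{\partial\breve{\Omega}}\mathrm{tr}\,\omega\wedge\star\mu=\langle d\omega,\mu\rangle-\langle\omega,\delta\mu\rangle=0$; since the boundary traces of such $\omega$ are dense in the relevant trace space, this forces the trace of $\star\mu$ to vanish, i.e. $\mu\in\tilde{H}_{n,0}^*\Lambda^s$.

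The hard part is the trace and density input in the second step: one must know that $\mathrm{tr}$ is well defined and continuous on $\tilde{H}_n\Lambda^{s-1}$ into a negative-order boundary form space, that smooth forms are dense there, and that their boundary traces are dense enough to pass from vanishing of the boundary pairing to vanishing of $\mathrm{tr}\,\star\mu$ --- the unweighted versions of which are in \cite[section 4]{AFW:2010} and the axisymmetric versions in \cite{Lacoste:2000}. I expect the most economical route is to sidestep reproving these: by (\ref{dn}), (\ref{delta_n}) and the isometry (\ref{cos_sin}), $\tilde{L}^{2,n}\Lambda^s$ is a closed subspace of $L^2\Lambda^s(\breve{\Omega})$ onto which the orthogonal Fourier projection commutes with both $d$ and $\delta$ (this is the decoupling of the Fourier modes noted in Section~\ref{sec:preliminaries}); hence it reduces the unbounded operator $d$, and the adjoint of $d$ restricted to this subspace is just the restriction of the three-dimensional adjoint. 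Then \cite[Theorem 4.1]{AFW:2010} applies verbatim and only the bookkeeping of the $\cos n\theta$/$\sin n\theta$ factors, already carried out in (\ref{dn}) and (\ref{delta_n}), remains.
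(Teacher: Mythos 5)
Your proposal is correct, and its core argument is the same as the paper's: the paper establishes the weighted integration-by-parts identity for the mode-$n$ forms and then obtains the theorem exactly as in \cite[Theorem 4.1]{AFW:2010}, i.e.\ the easy inclusion from the extended integration-by-parts formula, and the converse by first reading off the weak coderivative from compactly supported test forms and then killing the boundary pairing $\int_{\partial\breve{\Omega}}\mathrm{tr}\,\omega\wedge\star\mu$ by density of traces, which is precisely your two-step argument (including the same reliance on the trace/density facts from \cite[Section 4]{AFW:2010} and \cite{Lacoste:2000} that you flag). Your ``economical route'' is a genuinely different packaging: instead of redoing AFW's proof with the weighted inner products, you observe that $\tilde{L}^{2,n}\Lambda^s$ is a closed subspace of $L^2\Lambda^s(\breve{\Omega})$ reduced by $d$ (the orthogonal Fourier/parity projection maps the domain of $d$ into itself and commutes with $d$, hence, being self-adjoint, also with $d^*$), so the adjoint of the restricted operator is the restriction of the three-dimensional adjoint and \cite[Theorem 4.1]{AFW:2010} applies verbatim. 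This buys you freedom from re-establishing trace and density results in the weighted setting, but it does require two verifications you should not leave implicit: that the projection really commutes with $d$ on domains (this is the mode decoupling of Section~\ref{sec:preliminaries}, made rigorous e.g.\ via rotation-averaging and the $\theta\mapsto-\theta$ parity symmetrization, both of which commute with pullback and hence with $d$), and that the intersection of the 3D adjoint domain with $\tilde{L}^{2,n}\Lambda^s$ is exactly $\tilde{H}_{n,0}^*\Lambda^s$ as defined in the paper (vanishing trace of $\star\mu$ on all of $\partial\breve{\Omega}$, which corresponds to the $\Gamma_1$ conditions in (\ref{Vk_star})). With those two points spelled out, either route is a complete proof; identities (\ref{dn}), (\ref{delta_n}) and (\ref{cos_sin}) supply the bookkeeping in both.
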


\subsection{Proof of Proposition \ref{prop:inverse} item (\ref{inv4})} \label{inverse_proof}

Recall that $r_K$ denotes maximal value of $r$ in $K$. For triangles that do not intersect the $z$-axis, the proof of (\ref{inv4}) is trivial, since
\[
r_K^2\|\dfrac{u}{r}\|^2_{L^2_r(K)} \leq \dfrac{r_K^2}{r_{K,min}^2}\|u\|^2_{L^2_r(K)} \leq C\|u\|^2_{L^2_r(K)},
\]
where $r_{K,min}$ denotes the minimal value of $r$ in $K$.
This is true, since $\dfrac{r_K}{r_{K,min}}\leq C$ for triangles that are aways from the $z$-axis.

For triangles that do intersect the $z$-axis, we use \cite[Corollary 4.1]{MR:1982} and \cite[Lemma 10]{Lacoste:2000} that states that
\begin{equation} \label{lem1}
\int_K \dfrac{u^2}{r^2} drdz \leq C\int_K |\gradrz u|^2 drdz,
\end{equation}
for all function $u\in H^1(K)$ that vanishes on $\Gamma_0$.

By using (\ref{lem1}), the proof of (\ref{inv4}) for any triangle $K$ that intersects the $z$ is completed by
\begin{equation*}
\begin{aligned}
r_K^2\|\dfrac{u}{r}\|^2_{L^2_r(K)} &= r_K^2\int_K \dfrac{u^2}{r} drdz \\ 
&= r_K^2\int_K \dfrac{u^2}{r^2} r drdz \\
&\leq r_K^3 \int_K \dfrac{u^2}{r^2}  drdz \\
&\leq Cr_K^3\int_K |\gradrz u|^2 drdz &&\textnormal{ by (\ref{lem1}) } \\
&\leq Cr_K^3h_K^2\|\gradrz u\|^2_{L^\infty(K)} \\
&\leq Cr_K^2\| \gradrz u \|^2_{L^2_r(K)} &&\textnormal{ by (\ref{inv3}) } \\
&\leq C\dfrac{r_K^2}{h_K^2} \| u \|^2_{L_r^2(K)} &&\textnormal{ by (\ref{inv1}) } \\
&\leq C\| u \|^2_{L_r^2(K)} &&\textnormal{ since $r_K\leq Ch_K$ on $K$ that intersects the $z$-axis. }
\end{aligned}
\end{equation*}
This completes the proof of (\ref{inv4}).

\subsection{Proof of Proposition \ref{prop:eta}} \label{prop_proof}

We will prove this proposition in two separate cases as done in \cite{GO:2012}. 

The first case is when $\ba$ is a point on the $z$-axis, i.e., $\ba=(0,a_z)$. Then $D_\ba$ is the half disk $\{(r,z)\in\RRR^2_+: r^2+(z-a_z)^2\leq\rho^2 \}$.
Let $\hat{D}_1=\{(\hat{r},\hat{z}):\hat{r}^2+\hat{z}^2\leq 1, \hat{r}\geq 0\}$ be the reference domain, and consider the mapping 
\[
r=\hat{r}\rho \quad\quad z=\hat{z}\rho+a_z.
\] 
On the reference domain, define $\hat{\eta}_1\in \hat{P}_l$, where $\hat{P}_l$ is the space of polynomials on $\hat{D}_1$ of order less than or equal to $l$, in the following way:
\[
\int_{\hat{D}_1} \hat{r}^3\hat{\eta}_1 \hat{p}  d\hat{r}d\hat{z} = \hat{p}(\mathbf{0}) \quad\forall \hat{p}\in \hat{P}_l.
\]
Set $\eta_\ba(r,z)=\dfrac{1}{\rho^5}\hat{\eta}_1(\hat{r},\hat{z})$. Then
\begin{align*}
(r\eta_\ba,rp)_{L^2_r(D_\ba)} & =\int_{D_\ba} r^3\eta_\ba(r,z) p(r,z)drdz \\
&= \int_{\hat{D}_1} \hat{r}^3\rho^3 \dfrac{1}{\rho^5}\hat{\eta}_1(\hat{r},\hat{z})\hat{p}(\hat{r},\hat{z}) \rho^2d\hat{r}d\hat{z} \\ 
&= \int_{\hat{D}_1} \hat{r}^3\hat{\eta}_1 \hat{p} d\hat{r}d\hat{z} \\ 
&= \hat{p}(\mathbf{0}) = p(\ba),
\end{align*}
where $\hat{p}(\hat{r},\hat{z})=p(r,z)$. This proves (\ref{eta1}). Furthermore,
\[
\|r\eta_\ba\|_{L^2_r(D_\ba)}^2 = \eta_\ba(\ba) = \dfrac{1}{\rho^5} \hat{\eta}_1(\mathbf{0}) \leq \dfrac{C}{\rho^2 r_a^3}, 
\]
since $r_\ba=\rho$ in case 1. This proves (\ref{eta1}). 
Furthermore, (\ref{eta2}) holds true as
\begin{align*}
\int_{D_\ba} r^3(\by)|\eta_{\ba}(\by)| d\boldsymbol{y} 
&= \int_{D_\ba} |\eta_{\ba}r^{3/2}| r^{3/2} d\boldsymbol{y} \\
&\leq \|r^{3/2}\eta_\ba\|_{L^2(D_\ba)}\|r^{3/2}\|_{L^2(D_\ba)} &&\textnormal{ by Holder's inequality } \\
&= \|r\eta_{\ba}\|_{L^2_r(D_\ba)}\sqrt{\int_{D_\ba} r^3 d\boldsymbol{y}} \\
&\leq \dfrac{C}{\sqrt{\rho^2r_\ba^3}}\sqrt{r_{a, max}^3 \cdot\pi\rho^2}  &&\textnormal{ by (\ref{eta1}) } \\
&\leq C,
\end{align*}
where $r_{a, max}=\max_{\bx\in D_\ba} r(\bx)$, so $\dfrac{r_{a, max}}{\rho}=1$. This proves (\ref{eta2}) and (\ref{eta3}) for the first case.

Next, we consider the second case where $\ba=(a_r, a_z)$ is not on the $z$-axis. In this case, $D_\ba=\{(r,z)\in\RRR^2_+: (r-a_r)^2+(z-a_z)^2\leq\rho^2 \}$,
and we will used the reference domain  $\hat{D}_2=\{(\hat{r},\hat{z}):\hat{r}^2+\hat{z}^2\leq 1\}$. 
The mapping from that we will be using between $D_\ba$ and $\hat{D}_2$ is
\[
r=a_r+\rho\hat{r} \quad\quad z=a_z+\rho\hat{z}.
\]
Let us consider two polynomials $\hat{\eta}_{\ba,1}\in\hat{P}_l$ and 
$\hat{\eta}_{\ba,\star}\in\hat{P}_l$ on $\hat{D}_2$ defined as
\begin{align*}
\int_{\hat{D}_2} (a_r+\rho\hat{r})^3\hat{\eta}_{\ba,\star}\hat{p} d\hat{r} d\hat{z} &= \hat{p}(\mathbf{0}) &&\textnormal{ for all } \hat{p}\in \hat{P}_l, \\
\int_{\hat{D}_2} \hat{\eta}_{\ba,1}\hat{p} d\hat{r} d\hat{z} &= \hat{p}(\mathbf{0}) &&\textnormal{ for all } \hat{p}\in \hat{P}_l.
\end{align*} 
Note that, since $r=a_r+\rho\hat{r}$ and $D_\ba$ is away from the $z$-axis, $a_r+\rho\hat{r}$ is bounded above and below on $\hat{D}_2$.
Then, since
\begin{equation*} 
\begin{aligned}
\int_{\hat{D}_2} (a_r+\rho\hat{r})^3\hat{\eta}_{\ba,\star}^2 d\hat{r} d\hat{z} &= \hat{\eta}_{\ba,\star}(\mathbf{0}) \\
&= \int_{\hat{D}_2} \hat{\eta}_{\ba,1}  \hat{\eta}_{\ba,\star} d\hat{r} d\hat{z} \\
&\leq \|\hat{\eta}_{\ba,1}\|_{L^2(\hat{D}_2)}  \|\hat{\eta}_{\ba,\star}\|_{L^2(\hat{D}_2)} \quad\textnormal{ by Holder's inequality } \\
&\leq \|\hat{\eta}_{\ba,1}\|_{L^2(\hat{D}_2)} \sqrt{\int_{\hat{D}_2} \hat{\eta}_{\ba,\star}^2  (a_r+\rho\hat{r})^3 d\hat{r} d\hat{z}} \cdot \max_{\by\in\hat{D}_2}\left[(a_r+\rho\hat{r})^{-3/2}\right]. \\
\end{aligned}
\end{equation*}
This shows that 
\[
\sqrt{\int_{\hat{D}_2} \hat{\eta}_{\ba,\star}^2  (a_r+\rho\hat{r})^3 d\hat{r} d\hat{z}} \leq C\max_{\by\in\hat{D}_2}\left[(a_r+\rho\hat{r})^{-3/2}\right],
\]
and so 
\begin{equation} \label{eqeq1}
\hat{\eta}_{\ba,\star}(\mathbf{0}) = \int_{\hat{D}_2} (a_r+\rho\hat{r})^3\hat{\eta}_{\ba,\star}^2 d\hat{r} d\hat{z} \leq  C\max_{\by\in\hat{D}_2}\left[(a_r+\rho\hat{r})^{-3}\right].
\end{equation}
Now let us define $\eta_\ba(r,z)$ on $D_\ba$ as
\[
\eta_\ba(r,z) = \dfrac{1}{\rho^2}\hat{\eta}_{\ba,\star}(\hat{r},\hat{z}). 
\]
Then, (\ref{eta1}) is true, since
\begin{align*}
(r\eta_a, rp)_{L^2_r(D_\ba)} &= \int_{D_\ba} r^3\eta_a(r,z)p(r,z) drdz \\
&= \int_{\hat{D}_2}  (a_r+\rho\hat{r})^3\dfrac{1}{\rho^2}\hat{\eta}_{\ba,\star}(\hat{r},\hat{z})\hat{p}(\hat{r},\hat{z}) \rho^2d\hat{r} d\hat{z} \\
&= \int_{\hat{D}_2}  (a_r+\rho\hat{r})^3 \hat{\eta}_{\ba,\star}(\hat{r},\hat{z}) \hat{p}(\hat{r},\hat{z})d\hat{r} d\hat{z} \\
&= \hat{p}(\mathbf{0}) = p(\ba) &&\textnormal{ for all } p\in P_l.
\end{align*}
Furthermore, (\ref{eta2}) and (\ref{eta3}) also hold true, since
\begin{align*}
\|r\eta_\ba\|_{L^2_r(D_\ba)}^2 = \eta_\ba(\ba) = \dfrac{1}{\rho^2}\hat{\eta}_{\ba,\star}(\mathbf{0}) \leq  \dfrac{C}{\rho^2r_\ba^3},
\end{align*}
by (\ref{eqeq1}), and 
\begin{align*}
\int_{D_\ba} r^3(\by)|\eta_{\ba}(\by)| d\boldsymbol{y} 
&= \int_{D_\ba} |\eta_{\ba}(\by)r^{3/2}| r^{3/2} d\boldsymbol{y} \\
&\leq \|r\eta_{\ba}\|_{L^2_r(D_\ba)}\sqrt{\int_{D_\ba} r^3 d\boldsymbol{y}} &&\textnormal{ by Holder's inequality } \\
&\leq \dfrac{C}{\sqrt{\rho^2r_\ba^3}}\sqrt{r_{a, max}^3 \cdot\pi\rho^2}  &&\textnormal{ by (\ref{eta1}) } \\
&\leq C &&\tn{ by (\ref{threethreethree}), }
\end{align*}
and this completes the proof. 
\bibliographystyle{siam}	
\bibliography{reference}	

\end{document}